%
%
%
%
%
%
%
\documentclass[%
 reprint,
 amsmath,amssymb,
 aps,
]{revtex4-2}

\usepackage{graphicx}
\usepackage{dcolumn}
\usepackage{bm}
\usepackage{bbm}

\usepackage{amsthm}
\usepackage{bbm}
\newtheorem{theorem}{Theorem}
\newtheorem{lemma}{Lemma}
\newtheorem{prop}{Proposition}
\newtheorem{corollary}{Corollary}
\newtheorem{remark}{Remark}
\newtheorem{definition}{Definition}

\usepackage{dsfont}
\usepackage{mathtools}
\usepackage{soul}
\usepackage{xcolor}
\let\ACMmaketitle=\maketitle
\renewcommand{\maketitle}{\begingroup\let\footnote=\thanks \ACMmaketitle\endgroup}
\newcommand{\YZ}[1]{\textcolor{black}{#1}}

\DeclarePairedDelimiterX{\inp}[2]{\langle}{\rangle}{#1, #2}
\bibliographystyle{apsrev4-2}
\begin{document}

\preprint{APS/123-QED}

\title{Eigenvalues of Autocovariance Matrix: A Practical Method to Identify the Koopman Eigenfrequencies}

\author{Yicun Zhen}
  \email{zhenyicun@protonmail.com}

\author{Bertrand Chapron}%
\affiliation{%
Institut Français de Recherche pour l'Exploitation de la Mer, Plouzan{\'e}, France
}%


\author{Etienne M\'{e}min}
\affiliation{
INRIA/IRMAR Campus universitaire de Beaulieu, Rennes, 35042 Cedex, France}
\altaffiliation{IRMAR - Institut de Recherche Math{\'e}matique de Rennes \\INRIA - Institut national de recherche en informatique et en automatique
}%

\author{Lin Peng}
\affiliation{%
Ocean University of China, Qingdao, China
}%
\altaffiliation{Also at Jiangsu Ocean University, Lianyungang, China}

\date{\today}

\begin{abstract}
     To infer eigenvalues of the infinite-dimensional Koopman operator, we study the leading eigenvalues of the autocovariance matrix associated with a given observable of a dynamical system. 
     For any observable $f$ for which all the time-delayed autocovariance exist, we construct a Hilbert space $\mathcal{H}_f$ and a Koopman-like operator  $\mathcal{K}$ that acts on $\mathcal{H}_f$. We prove that the leading eigenvalues of the autocovariance matrix has one-to-one correspondence with the energy of $f$ that is represented by the eigenvectors of $\mathcal{K}$. The proof is associated to several representation theorems of isometric operators on a Hilbert space, and the weak-mixing property of the observables represented by the continuous spectrum. We also provide an alternative proof of the weakly mixing property. When $f$ is an observable of an ergodic dynamical system which has a finite invariant measure $\mu$, $\mathcal{H}_f$ coincides with closure in $L^2(X,d\mu)$ of Krylov subspace generated by $f$, and $\mathcal{K}$ coincides with the classical Koopman operator. The main theorem sheds light to the theoretical foundation of several semi-empirical methods, including singular spectrum analysis (SSA), data-adaptive harmonic analysis (DAHD), Hankel DMD and Hankel alternative view of Koopman analysis (HAVOK). It shows that, when the system is ergodic and has finite invariant measure, the leading temporal empirical orthogonal functions indeed correspond to the Koopman eigenfrequencies.  A theorem-based practical methodology is then proposed to identify the  eigenfrequencies of $\mathcal{K}$ from a given time series. It builds on the fact that the convergence of the renormalized eigenvalues of the Gram matrix is a necessary and sufficient condition for the existence of $\mathcal{K}-$eigenfrequencies. Numerical illustrating results on  simple low dimensional systems and real interpolated ocean sea-surface height data are presented and discussed.
\end{abstract}

\maketitle


\section{Introduction}
The dynamic mode decomposition (DMD) algorithm, is a powerful and versatile data-driven approach proposed by \cite{Schmid2010}, ideally suited to analyze complex high-dimensional geophysical flows in terms of recurrent or quasi-periodic modes. The DMD is indeed related to the Koopman theory \cite{Rowley2009}, stating that observables of an Hamiltonian system can always be described via a linear  transformation. The original DMD algorithm has a lot of common points with the algorithm presented in \cite{Saad1980}. For practical applications and real data analysis, several follow-up algorithms have been proposed. To name a few, it can be listed the optimized DMD \cite{Chen2012}, the optimal mode decomposition \cite{wynn2013}, the exact DMD\cite{Tu2014}, the Hankel DMD \cite{Arbabi2017}, the sparsity promoting DMD \cite{Kusaba2020}, the multi-resolution DMD \cite{Kutz2015}, the extended DMD \cite{Williams2015}, DMD with control \cite{Proctor2016}, total least squares DMD \cite{Hemati2017}, dynamic distribution decomposition \cite{TaylorKing2020}, etc. These DMD algorithms are generally motivated by different reasons, but a key overall objective is to help provide the most precise numerical approximation of the Koopman operator. 
When the system is ergodic and measure-preserving, it would indeed be equivalent to have a precise description of the spectrum $\nu$ of Koopman operator restricted on $\mathcal{H}_f$ and a precise mapping between $\mathcal{H}_f$ and $L^2(S^1,d\nu)$ (where $\mathcal{H}_f$ is the linear subspace generated by a single observable $f$ and $S^1$ the unit complex circle, see section 2 for detailed definition of these spaces).  
The authors of \cite{Korda2018} proved the convergence in the strong operator topology of extended DMD algorithm, provided a complete orthogonal basis of the space of square-integrable observables. In \cite{Arbabi2017}  the convergence of Hankel DMD algorithm is proved for the finite dimensional case, which corresponds to the finite truncation of the discrete part of the spectrum.  Christoffel-Darboux kernel is exploited in \cite{KORDA2020599} to directly identify the discrete component and the absolutely continuous component of the spectrum.
Note, DMD algorithms are not the only way to approximate Koopman operator. In a series of papers ( \cite{WilliamsRowleyKevrekidis2015},\cite{Das2017}, \cite{Das2018},\cite{Giannakis2018} and \cite{Giannakis2020}), the approximation of Koopman operator is performed by kernel methods. Recently, \cite{Giannakis2020} showed the convergence of kernel methods for any measure preserving ergodic dynamical systems, the measure of which support lies on a compact  manifold. A related issue for stochastic dynamics can be found in \cite{Nuske2021}

In this manuscript, we define a Hilbert space $\mathcal{H}_f$ and the associated Koopman-like time-shift operator $\mathcal{K}$ for each given time series the autocovariances of which exist. When the time series is given by an observable $f$ on a ergodic dynamical system with finite invariant measure, $\mathcal{H}_f$ coincides with the closure of the infinite dimensional Krylov subspace generated by the classical Koopman operator and $f$, and $\mathcal{K}$ coincides with the classical Koopman operator on $\mathcal{H}_f$. We argue and prove that for any observable $f$ such that all the time-delayed autocovariance exist, when the  parameters of the autocovariance matrix $C_{NM}(f)$ goes to infinity in the right order, the leading eigenvalues renormalized by the dimension of $C_{NM}(f)$ converge to the energy of $f$ that is represented by the  eigenvector of $\mathcal{K}$. All other renormalized eigenvalues shall further converge to 0 uniformly. Despite its theoretical interests, the main theorem directly suggests a practical algorithm to explicitly identify the Koopman eigenfrequencies together with the associated energy from given time series. As a by-product, it also shows that the leading temporal empirical orthogonal functions calculated by singular spectrum analysis (SSA, \cite{Ghil2002AdvancedSM}) method are indeed represented by the eigenfrequencies of $\mathcal{K}$. Similarly, this theorem also sheds light on the theoretical foundation of data-adaptive harmonic decomposition (DAHD,\cite{Kondrashov2020DataadaptiveHA}, and Hankel alternative view of Koopman analysis (HAVOK, \cite{Brunton2017ChaosAA}). Because all these methods are based either on trajectory matrix or on $C_{NM}(f)$.
 
The paper is organized as follows. In section 2, we present our main result and the necessary mathematical background knowledge. We also discuss about how the main theorem provides theoretical support to SSA,DAHD, and HAVOK. In section 3, we state the details of the proof of the main result. In section 4, we present the detailed algorithm and compare it with another numerical method based on Yosida's mean ergodic theorem (\cite{Yosida1995}). In section 5 we present  numerical results on two simple low dimensional measure preserving ergodic dynamical systems and interpolated ocean sea-surface height data. Section 6 concludes this study and gives some perspectives. The necessary code and data that reproduces all the numerical results can be accessed at https://doi.org/10.5281/zenodo.5585970.

\section{Preliminaries and the main result}
Given a continuous-time dynamical system 
\[
X_t = \Phi_t(X_0), 
\]
and an observable $f(X_t)$, we have a time series $\{f(X_t): t=0,\Delta t,2\Delta t,...\}$. We assume that the time-delayed autocovariance of $f$ exists for all $l\in\mathbb{N}$:
\begin{align}
    \rho_{l\Delta t} = \lim_{n\to\infty}\frac{1}{n}\sum_{k=0}^nf(X_{k\Delta t})\bar{f}(X_{(k+l)\Delta t}).
\end{align}
To avoid misinterpretation, we use $F_0 = \{f(X_0), f(X_{\Delta t}),...\}$ to denote time series associated to $f$ and a given (fixed) orbit of the dynamical system. For $l\in\mathbb{N}$, we define $F_l$ to be the time shifted time series $\{f(X_{l\Delta t}), f(X_{(l+1)\Delta t}),...\}$. For any $a,b\in\mathbb{C}$ and any two time series, associated to the same dynamics 
\begin{align}
    G = \{g_0,g_1,...\}\nonumber \\
    H = \{h_0,h_1,...\}\nonumber,
\end{align}
we define
\begin{align}
    aG + bH = \{ag_0+bh_0, ag_1+bh_1, ...\}. \nonumber 
\end{align}
Let 
\begin{align}
    \tilde{H}_f = \{\sum_{i=1}^n c_i F_{l_i}: n\geq 1, c_i\in\mathbb{C}, l_i\in\mathbb{N}\}.
\end{align}
Then $\tilde{H}_f$ is a linear space. Now we define the Koopman-like (or time shift) operator on $\tilde{H}_f$. We start with for any $l\in\mathbb{N}$
\begin{align}
    \mathcal{K}^{\Delta t}F_l = F_{l+1}, 
\end{align}
and then generalize the action of $\mathcal{K}^{\Delta t}$ to the whole $\tilde{H}_f$. It is not hard to show that $\mathcal{K}^{\Delta t}: \tilde{H}_f\to \tilde{H}_f$ is well-defined.

The existence of $\rho_s$ allows us to define an inner product on  $\tilde{\mathcal{H}}_f$ by
\begin{align}
    \inp{h}{g}  = \lim_{n\to\infty}\frac{1}{n}\sum_{k=0}^{n-1}h_k\bar{g}_k,
\end{align}
where $h,g\in\tilde{\mathcal{H}}_f$. 
For any $l_1,l_2\in\mathbb{N}$, it is obvious that 
\begin{align}
    \rho_{l_1\Delta t} = \lim_{n\to\infty}\frac{1}{n}\sum_{k=0}^nf(X_{(n+l_2)\Delta t})\bar{f}(X_{(n+l_1+l_2)\Delta t}).
\end{align}
Hence $\mathcal{K}^{\Delta t}$ preserves the inner product in $\tilde{\mathcal{H}}_f$ and is hence continuous.

Let $\mathcal{H}_f$ be the completion of $\tilde{\mathcal{H}}_f$. $\mathcal{H}_f$ is a Hilbert space. $\mathcal{K}^{\Delta t}$ preserves the inner product of $\tilde{\mathcal{H}}_f$.  Therefore $\mathcal{K}^{\Delta t}$ can be extended, by continuity, to an isometric operator that acts on $\mathcal{H}_f$. For sake of simplicity, keeping the same notation for the extended operator, $\mathcal{K}^{\Delta t}$, whose domain is $\mathcal{H}_f$, is the Koopman-like (or time shift) operator we study in this paper.

\begin{remark}
The classical Koopman operator is defined to act on some function space on the whole phase space of some dynamical system, i.e.
\begin{align}
    \mathcal{K}^{\text{cl}}g (x) = g(\Phi(x)),
\end{align}
where $x\in X$ is a state and $\Phi$ the discrete-time flow of the dynamical system.

In our setting, the definition of $\mathcal{K}^{\Delta t}$ and $\mathcal{H}_f$ purely relies on the time series. The dynamical system is hidden behind. It is possible that the time series only reveals partial properties of the dynamical system.  When the discrete-time dynamical system is ergodic and has finite invariant measure $\mu$, Birkhoff ergodic theorem guarantees that $\mathcal{H}_f$ is isomorphic to the closure of Krylov subspace $\overline{\text{Span}\{f,\mathcal{K}^{\text{cl}}f,...\}}\subset L^2(X,d\mu)$ as Hilbert spaces. The isomorphism is given by 
\begin{align}
    \phi: \sum_{i=1}^nc_iF_{l_i} \rightarrow \sum_{i=1}^nc_i(\mathcal{K}^{\text{cl}})^{l_i}f    
\end{align}
And it is not hard to see that $\mathcal{K}^{\Delta t}$ acts on $\mathcal{H}_f$ in a similar way as $\mathcal{K}^{\text{cl}}$ on the space of observables $L^2(X,d\mu)$, i.e. for any $h\in \mathcal{H}_f$,
\begin{align}
    \phi(\mathcal{K}h) = \mathcal{K}^{\text{cl}}(\phi(h)).
\end{align}
The elements in $\tilde{\mathcal{H}}_f$  can always be represented as some time series. But in general we can not assert that any element in $\mathcal{H}_f$ can be represented as some time series. In particular, we do not know if the eigenvectors $v_i\in\mathcal{H}_f$ of $\mathcal{K}$ can be represented as a time series in the form $\{1,\xi_i, \xi_i^2,...\}$. Nor in general can we identify $\mathcal{H}_f$ with some space of functions on the phase space. Hence, without these assumptions, the time-shift operator and the Koopman operator cannot be strictly related.  Ergodicity+finite invariant measure is a stronger assumption than the existence of autocovariance. As long as the autocovariances exist, the quantities mentioned in the main result are mathematically well-defined. Therefore we do not restrict ourselves to the case where the system is ergodic and has finite invariant measure. But we keep the generality in the definition of $\mathcal{H}_f$ and $\mathcal{K}$. The physical meaning of $\mathcal{H}_f$ and $\mathcal{K}$ in the general case needs to be studied further. For readers who are interested in the classical Koopman operator $\mathcal{K}^{\text{cl}}$, we point out that $\mathcal{K}$ indeed coincides with $\mathcal{K}^{cl}$ when the system is ergodic and has finite invariant measure. 
\end{remark}

If $\{f(X_t): t\geq 0\}$ is a continuous-time time series, we assume that  
\begin{align}
    \rho_s = \lim_{T\to\infty}\frac{1}{T}\int_{0}^Tf(X_t)\bar{f}(X_{t+s})dt
\end{align}
exists for every $s\geq 0$. Similar to the discrete-time case, we define 
\begin{align}
    F_{s} = \{f(X_t):t\geq s\}    
\end{align}
to be the continuous time series that starts from $f(X_s)$.
We can define the linear space
\begin{align}
\tilde{\mathcal{H}}^{\text{cont}}_f = \{\displaystyle\sum_{i=1}^n c_iF_{s_i}: c_i\in\mathbb{C}, s_i\geq 0, n\geq 1\},
\end{align}
with inner product:
\begin{align}
    \inp{h}{g} = \lim_{T\to\infty}\frac{1}{T}\int_{0}^Th(t)\bar{g}(t)dt.
\end{align}
Let $\mathcal{H}^{\text{cont}}_f$ be the completion of $\tilde{\mathcal{H}}^{\text{cont}}_f$. $\mathcal{K}^s$ is an isometry on $\mathcal{H}^{\text{cont}}_f$ for all $s\geq 0$.

We are interested in the eigenvalues and eigenvectors of $\mathcal{K}^{\Delta t}$ (for discrete-time case) and $\mathcal{K}^s$ (for continuous-time case). A natural question to ask is that whether the eigenfrequencies of $\mathcal{K}^{\Delta t}$ and $\mathcal{K}^{s}$ are the same. We have the following result.
\begin{prop}\label{prop:cont K = disc K}
Let $\{f(X_t):t\geq 0\}$ be a continuous time process for which $\rho_s$ exists for all $s\geq 0$. Assume that the curve $\mathcal{K}^s: [0,\infty)\to\mathcal{H}_f^{\text{cont}}$ is continuous in $s$. Let $\Delta t>0$ be a time step. Assume that 
\begin{align}
    &\lim_{T\to\infty}\frac{1}{T}\int_{0}^Tf(X_t)\bar{f}(X_{t+s})dt\nonumber \\
    =&\lim_{T\to\infty}\frac{\Delta t}{T}\sum_{\mathbb{N}\ni n=0}^{T/\Delta t}f(X_{n\Delta t})\bar{f}(X_{n\Delta t+s}),\label{eq: inner product cont=disc}
\end{align}
then $\mathcal{H}_f \hookrightarrow \mathcal{H}^{\text{cont}}_f$.
Let $q$ be an eigenfrequency of the discrete-time operator $\mathcal{K}^{\Delta t}$, i.e. there exists $h\in \mathcal{H}_f \hookrightarrow \mathcal{H}^{\text{cont}}_f$ so that $\mathcal{K}^{\Delta t}h = e^{iq}h$. Then there exists an integer $k$, and $h_k\in\mathcal{H}^{\text{cont}}_f$, so that
\begin{align}
    \mathcal{K}^sh_k = e^{i\frac{q+2k\pi}{\Delta t}s}h_k
\end{align}
for all $s\geq 0$.
\end{prop}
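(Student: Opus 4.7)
The proof proceeds in two movements: first embed $\mathcal{H}_f$ into $\mathcal{H}_f^{\text{cont}}$ so that the eigenvector $h$ becomes an eigenvector of the continuous-time operator, and then Fourier-decompose the quasi-periodic curve $s\mapsto \mathcal{K}^s h$ to extract the claimed $h_k$.

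For the embedding, define $\phi$ on generators by sending the discrete series $F_l$ to the continuous trajectory $F_{l\Delta t}$, and extend linearly to $\tilde{\mathcal{H}}_f$. Hypothesis (14), applied to the time-shifted pair $(F_{l_1},F_{l_2})$ via a standard change-of-base-point argument for the time average, gives
\begin{align}
\langle F_{l_1},F_{l_2}\rangle_{\mathcal{H}_f} \;=\; \rho_{(l_2-l_1)\Delta t}\;=\;\langle F_{l_1\Delta t},F_{l_2\Delta t}\rangle_{\mathcal{H}_f^{\text{cont}}},
\end{align}
so $\phi$ is isometric and, by density, extends to $\mathcal{H}_f\hookrightarrow\mathcal{H}_f^{\text{cont}}$. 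Since the discrete $\mathcal{K}^{\Delta t}$ and the continuous $\mathcal{K}^{\Delta t}$ both act as the time-shift-by-$\Delta t$ on generators, $\phi$ intertwines them, and consequently $\mathcal{K}^{\Delta t}h=e^{iq}h$ holds inside $\mathcal{H}_f^{\text{cont}}$.

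Now set $\psi(s) := e^{-iqs/\Delta t}\mathcal{K}^s h$. The semigroup identity $\mathcal{K}^{s+\Delta t}=\mathcal{K}^s\mathcal{K}^{\Delta t}$ combined with the eigenvalue equation yields $\psi(s+\Delta t)=\psi(s)$, and by the assumed continuity of $s\mapsto\mathcal{K}^s$, $\psi$ is a continuous $\Delta t$-periodic curve with values in $\mathcal{H}_f^{\text{cont}}$. Define its Hilbert-valued Fourier coefficients
\begin{align}
h_k \;:=\; \frac{1}{\Delta t}\int_0^{\Delta t}\psi(s)\,e^{-i 2\pi k s/\Delta t}\,ds,\qquad k\in\mathbb{Z},
\end{align}
as Bochner integrals. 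Pulling the bounded operator $\mathcal{K}^t$ inside the integral and substituting $u=s+t$ gives an integrand $e^{i(q+2\pi k)t/\Delta t}\psi(u)e^{-i2\pi ku/\Delta t}$ over $[t,t+\Delta t]$; by $\Delta t$-periodicity of $\psi(u)e^{-i2\pi ku/\Delta t}$ the integral reduces back to $\Delta t\, h_k$, giving $\mathcal{K}^t h_k = e^{i(q+2\pi k)t/\Delta t}h_k$ for all $t\geq 0$. Finally, since $\mathcal{K}^s$ is an isometry, $\|\psi(s)\|=\|h\|$ for every $s$, so Parseval's identity for $\mathcal{H}_f^{\text{cont}}$-valued Fourier series yields $\sum_{k\in\mathbb{Z}}\|h_k\|^2 = \|h\|^2>0$, and at least one $h_k$ is non-zero.

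The only genuine obstacle is recognising that $s\mapsto\mathcal{K}^s h$ is a Hilbert-valued quasi-periodic curve with multiplier $e^{iq}$; once that is observed, the problem collapses to ordinary Fourier analysis on $\mathbb{R}/\Delta t\mathbb{Z}$, whose admissible frequencies are exactly $\{(q+2k\pi)/\Delta t:k\in\mathbb{Z}\}$. The remaining technical items — the intertwining in Step~1, the interchange of $\mathcal{K}^t$ with the Bochner integral, and the Parseval step — are standard given the continuity hypothesis and the isometric character of $\mathcal{K}^s$.
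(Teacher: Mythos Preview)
Your proof is correct and follows essentially the same route as the paper's: define the periodic curve $\psi(s)=e^{-iqs/\Delta t}\mathcal{K}^s h$, take its Hilbert-valued Fourier coefficients, verify the eigen-equation by the substitution $u=s+t$ and periodicity, and invoke Parseval to guarantee a nonzero coefficient. The only difference is that you spell out the isometric embedding $\mathcal{H}_f\hookrightarrow\mathcal{H}_f^{\text{cont}}$ explicitly, which the paper asserts in the statement but does not revisit in its appendix proof.
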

The proof of this proposition is given in the appendix. Proposition \ref{prop:cont K = disc K} guarantees that for every eigenfrequency $q$ of $\mathcal{K}^{\Delta t}$, there always exists an eigenfrequency of $\mathcal{K}^s$ for all $s\geq 0$ which reduces to $q$ at discrete time step.

Our discussion about continuous-time time series stops here. Hereinafter we always assume that the time series $\{f(X_t)\}$ is discrete in time. For simplicity the time series is denoted by $\{f(0), f(1),...\}$ and  we use $f$ to denote the whole time series generated by $f$ as an element in $\mathcal{H}_f$.  We use the notation $\mathcal{K}$ to replace $\mathcal{K}^{\Delta t}$. Recall that the time-shift operator $\mathcal{K}$ is an isometry on the Hilbert space $\mathcal{H}_f$ of time-series associated to observable $f$.

\begin{remark}
Due to Birkhoff ergodic theorem, autocovariance exists when the dynamical system is ergodic and has a finite invariant measure. 
\end{remark}

The following theorem provides a very useful result to decompose any isometric operator and the Hilbert space on which it acts into unitary and non unitary part. 
\begin{theorem}[Wold decomposition]\label{thm:Wold}
Let $\mathcal{H}$ be a Hilbert space and $\mathcal{V}$ any isometry of $\mathcal{H}$. Then we have an orthogonal decomposition $\mathcal{H} = \mathcal{H}_{NU}\bigoplus\mathcal{H}_U$, and $\mathcal{V} = \mathcal{V}_{NU}\bigoplus\mathcal{V}_U$, such that $\mathcal{H}_{NU} = \displaystyle\bigoplus_{s\in I}\mathcal{H}_{s}$,  $\mathcal{V}_{NU}$ acts on $\mathcal{H}_s$ for some index set $I$ as a unilateral shift, i.e. $\mathcal{V}_{NU}(v_0,v_1,...) = (0, v_0,v_1,...)$. And $\mathcal{V}_U$ acts on $\mathcal{H}_U$ and is unitary. $\mathcal{H}_{NU}$ is called the completely non unitary part of $\mathcal{H}$ as it does not contain  closed subspaces of $\mathcal{H}$ on which $\mathcal{V}$ acts as a unitary operator.   
\end{theorem}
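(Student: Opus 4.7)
The plan is to follow the classical wandering-subspace construction. First I would introduce the \emph{wandering subspace} $L := \mathcal{H}\ominus\mathcal{V}\mathcal{H}$ (the orthogonal complement of the range of $\mathcal{V}$ inside $\mathcal{H}$). Using the fact that $\mathcal{V}$ is an isometry, a one-line calculation shows that $\mathcal{V}^m L\perp \mathcal{V}^n L$ whenever $m\neq n$: if $m<n$ and $\ell_1,\ell_2\in L$, then $\inp{\mathcal{V}^m\ell_1}{\mathcal{V}^n\ell_2}=\inp{\ell_1}{\mathcal{V}^{n-m}\ell_2}$, and $\mathcal{V}^{n-m}\ell_2\in\mathcal{V}\mathcal{H}$ while $\ell_1\in (\mathcal{V}\mathcal{H})^{\perp}$. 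Hence $\mathcal{H}_{NU}:=\bigoplus_{n\geq 0}\mathcal{V}^n L$ is a well-defined orthogonal direct sum, and by construction $\mathcal{V}$ sends $\mathcal{V}^n L$ isometrically onto $\mathcal{V}^{n+1}L$.

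Next I would identify the orthogonal complement with the intersection of the descending range chain. Iterating the splitting $\mathcal{H} = L \oplus \mathcal{V}\mathcal{H}$ and applying $\mathcal{V}^k$ on both sides gives the finite decomposition $\mathcal{H}=\bigl(\bigoplus_{k=0}^{n-1}\mathcal{V}^k L\bigr)\oplus\mathcal{V}^n\mathcal{H}$ for every $n$. Taking the limit $n\to\infty$ yields $\mathcal{H}_{NU}^{\perp}=\bigcap_{n\geq 0}\mathcal{V}^n\mathcal{H}$, which I define to be $\mathcal{H}_U$. Setting $\mathcal{V}_{NU}:=\mathcal{V}|_{\mathcal{H}_{NU}}$ and $\mathcal{V}_U:=\mathcal{V}|_{\mathcal{H}_U}$ gives the claimed orthogonal decomposition; invariance of both summands follows directly since $\mathcal{V}(\mathcal{V}^n L)=\mathcal{V}^{n+1}L\subset\mathcal{H}_{NU}$ and $\mathcal{V}\bigl(\bigcap_n\mathcal{V}^n\mathcal{H}\bigr)\subset\bigcap_n\mathcal{V}^n\mathcal{H}$.

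For the structure of the two pieces, I would choose any orthonormal basis $\{e_s\}_{s\in I}$ of $L$, define $\mathcal{H}_s:=\overline{\mathrm{span}}\{\mathcal{V}^n e_s:n\geq 0\}$, and check that $\{\mathcal{V}^n e_s\}_{n\geq 0}$ is orthonormal (again by the isometry calculation above). Then $\mathcal{H}_{NU}=\bigoplus_{s\in I}\mathcal{H}_s$, and in the basis $\mathcal{V}^n e_s$ the operator $\mathcal{V}_{NU}$ literally acts as $(v_0,v_1,\dots)\mapsto(0,v_0,v_1,\dots)$ on each $\mathcal{H}_s$, which is the unilateral shift.

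The main obstacle I anticipate is verifying that $\mathcal{V}_U$ is actually \emph{unitary} and not merely isometric: surjectivity is not automatic. The argument I would use is that every $v\in\mathcal{H}_U$ lies in $\mathcal{V}^{n+1}\mathcal{H}$ for all $n\geq 0$, so $v=\mathcal{V}w_0$ for some $w_0\in\mathcal{H}$ and also $v=\mathcal{V}^{n+1}u_n=\mathcal{V}(\mathcal{V}^n u_n)$ for each $n$; injectivity of the isometry $\mathcal{V}$ forces $w_0=\mathcal{V}^n u_n\in\mathcal{V}^n\mathcal{H}$ for every $n$, hence $w_0\in\mathcal{H}_U$. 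Thus $\mathcal{V}_U(\mathcal{H}_U)=\mathcal{H}_U$ and $\mathcal{V}_U$ is unitary. The uniqueness of the decomposition, and the statement that $\mathcal{H}_{NU}$ contains no closed $\mathcal{V}$-invariant subspace on which $\mathcal{V}$ is unitary, then follow because any such subspace would be contained in every $\mathcal{V}^n\mathcal{H}$ and hence in $\mathcal{H}_U$.
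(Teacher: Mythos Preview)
Your wandering-subspace argument is correct and is the classical textbook proof of the Wold decomposition. Note, however, that the paper does not actually prove Theorem~\ref{thm:Wold}: it is stated as a known structural result and is immediately followed by the remark that it is a particular case of the Sz\H{o}kefalvi-Nagy--Foia\c{s} theorem for contractions (Theorem~2 in the paper), which is also quoted without proof. So there is no ``paper's own proof'' to compare against; the authors simply invoke the result as background.

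That said, your construction via $L=\mathcal{H}\ominus\mathcal{V}\mathcal{H}$, the identification $\mathcal{H}_U=\bigcap_{n\geq 0}\mathcal{V}^n\mathcal{H}$, and the surjectivity argument for $\mathcal{V}_U$ are all sound, and the final remark that any closed subspace on which $\mathcal{V}$ is unitary must sit inside every $\mathcal{V}^n\mathcal{H}$ (hence in $\mathcal{H}_U$) is exactly the right justification for calling $\mathcal{H}_{NU}$ the completely non-unitary part. The Sz\H{o}kefalvi-Nagy--Foia\c{s} formulation the paper quotes, with $\mathcal{H}_U=\bigcap_{k\geq 0}\bigl(\mathrm{fix}(\mathcal{T}^k\mathcal{T}^{*k})\cap\mathrm{fix}(\mathcal{T}^{*k}\mathcal{T}^k)\bigr)$, specializes to your $\bigcap_n\mathcal{V}^n\mathcal{H}$ when $\mathcal{T}=\mathcal{V}$ is an isometry (since then $\mathcal{V}^{*k}\mathcal{V}^k=I$ and $\mathrm{fix}(\mathcal{V}^k\mathcal{V}^{*k})=\mathcal{V}^k\mathcal{H}$), so the two descriptions of $\mathcal{H}_U$ agree.
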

Wold theorem is a particular case of  (Sz\"{o}kefalvi-Nagy–Foia\;{s}) theorem for contraction operator. 
\begin{theorem}[Sz\"{o}kefalvi-Nagy–Foias]
Let $\mathcal{T}$ be a contraction operator (i.e. $\|\mathcal{T}\| \leq 1$) on a Hilbert space $\mathcal{H}$ then 
\[
\mathcal{H}_U := \displaystyle\cap_{k\geq 0}(\rm{fix}(\mathcal{T}^k\mathcal{T}^{*k})\cap\rm{fix}(\mathcal{T}^{*k}\mathcal{T}^k))
\]
is the largest space among all closed $\mathcal{T}$-invariant and $\mathcal{T}^*$-invariant subspaces of H on which $\mathcal{T}$ restricts to a unitary operator. The orthogonal complement $\mathcal{H}_U^\perp =\mathcal{H}_{NU} $ is the completely non unitary part of $\mathcal{H}$. Here \rm{fix}(A) refers to the subspace spanned by all the invariant vectors of operator $\rm{A}$.
\end{theorem}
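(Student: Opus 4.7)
The plan is to show four things: that $\mathcal{H}_U$ is a closed subspace, that it is invariant under both $\mathcal{T}$ and $\mathcal{T}^*$, that $\mathcal{T}|_{\mathcal{H}_U}$ is unitary, and that any closed subspace with these three properties sits inside $\mathcal{H}_U$. Closedness is free from the very definition: each $\mathrm{fix}(\mathcal{T}^k\mathcal{T}^{*k})$ and $\mathrm{fix}(\mathcal{T}^{*k}\mathcal{T}^k)$ is the kernel of a bounded operator, and $\mathcal{H}_U$ is an intersection of kernels. The contraction hypothesis then enters through the vector-wise reformulation
\[
\mathcal{H}_U = \{v \in \mathcal{H} : \|\mathcal{T}^k v\| = \|\mathcal{T}^{*k}v\| = \|v\| \text{ for every } k \geq 0\},
\]
which follows because $I - \mathcal{T}^{*k}\mathcal{T}^k \geq 0$ and $I - \mathcal{T}^k\mathcal{T}^{*k} \geq 0$, so vanishing of $\langle (I-\mathcal{T}^{*k}\mathcal{T}^k)v,v\rangle$ already forces $\mathcal{T}^{*k}\mathcal{T}^k v = v$ (and symmetrically for the other identity).

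Next I would establish the invariance. Given $v \in \mathcal{H}_U$ and $w = \mathcal{T}v$, the forward iterates behave immediately: $\|\mathcal{T}^k w\| = \|\mathcal{T}^{k+1}v\| = \|v\| = \|w\|$. For the adjoint iterates I exploit the $k=1$ operator identity $\mathcal{T}^*\mathcal{T}v = v$ to collapse $\mathcal{T}^{*k}w = \mathcal{T}^{*k}\mathcal{T}v = \mathcal{T}^{*(k-1)}v$ for $k\geq 1$, so $\|\mathcal{T}^{*k}w\| = \|v\| = \|w\|$. The invariance $\mathcal{T}^*\mathcal{H}_U \subseteq \mathcal{H}_U$ is the symmetric argument based on $\mathcal{T}\mathcal{T}^*v = v$. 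Once invariance is secured, the $k=1$ conditions themselves give $\mathcal{T}^*\mathcal{T}|_{\mathcal{H}_U} = I$ (so $\mathcal{T}|_{\mathcal{H}_U}$ is isometric) and $\mathcal{T}\mathcal{T}^*|_{\mathcal{H}_U} = I$ (so $\mathcal{T}^*|_{\mathcal{H}_U}$ is a right inverse and $\mathcal{T}|_{\mathcal{H}_U}$ is surjective); together these say $\mathcal{T}|_{\mathcal{H}_U}$ is unitary.

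For maximality, take any closed $\mathcal{M}$ invariant under both $\mathcal{T}$ and $\mathcal{T}^*$ with $\mathcal{T}|_{\mathcal{M}}$ unitary. Because $\mathcal{M}$ is $\mathcal{T}^*$-invariant, computing inner products inside $\mathcal{M}$ shows that the adjoint of $\mathcal{T}|_{\mathcal{M}}$ (within $\mathcal{M}$) coincides with $\mathcal{T}^*|_{\mathcal{M}}$, so unitarity forces $\mathcal{T}^*|_{\mathcal{M}}$ to be isometric as well. Iterating both operators keeps $v$ inside $\mathcal{M}$ with norm $\|v\|$ at every step, and the reformulation places $v$ in $\mathcal{H}_U$. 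The statement about $\mathcal{H}_{NU} := \mathcal{H}_U^\perp$ then follows: $\mathcal{H}_U^\perp$ is $\mathcal{T}$- and $\mathcal{T}^*$-invariant (standard from the two-sided invariance of $\mathcal{H}_U$), and any closed $\mathcal{T}$- and $\mathcal{T}^*$-invariant subspace of $\mathcal{H}_U^\perp$ carrying a unitary restriction of $\mathcal{T}$ must, by maximality, lie in $\mathcal{H}_U \cap \mathcal{H}_U^\perp = \{0\}$.

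The step I expect to be most delicate is the equivalence between the operator fixed-point conditions defining $\mathcal{H}_U$ and the norm conditions on iterates, because every subsequent use of invariance and of the adjoint identities rests on passing freely between the two. This is where the contraction hypothesis is indispensable: without positivity of $I - \mathcal{T}^{*k}\mathcal{T}^k$ and $I - \mathcal{T}^k\mathcal{T}^{*k}$, one cannot recover the vector identity from the scalar norm equality, and the invariance argument in particular collapses. Once that equivalence is in place, the remainder reduces to bookkeeping with $\mathcal{T}$ and $\mathcal{T}^*$ together with the elementary fact that, on $\mathcal{T}$- and $\mathcal{T}^*$-invariant subspaces, the compression of $\mathcal{T}$ agrees with its restriction.
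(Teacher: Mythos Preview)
The paper does not actually prove this theorem: it is stated as classical background (immediately after the Wold decomposition) and used without argument, so there is no ``paper's own proof'' to compare against. Your proposal, however, is a correct self-contained proof. The equivalence between the operator fixed-point conditions and the norm conditions on iterates is exactly the right pivot, and your observation that it relies on the positive semidefiniteness of $I-\mathcal{T}^{*k}\mathcal{T}^k$ and $I-\mathcal{T}^k\mathcal{T}^{*k}$ is precisely where the contraction hypothesis enters. The invariance argument via $\mathcal{T}^*\mathcal{T}v=v$ and the maximality argument via the identification $(\mathcal{T}|_{\mathcal{M}})^*=\mathcal{T}^*|_{\mathcal{M}}$ on a bi-invariant subspace are both standard and correctly carried out. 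One small remark: in your invariance step you should also note the case $k=0$ for $\|\mathcal{T}^{*k}w\|$, which is trivial but completes the verification that $w\in\mathcal{H}_U$.
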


Theorem \ref{thm:Wold} implies an orthogonal decomposition $f = f_{NU} + f_U$. Note that $\mathcal{H}_{f,NU}$ and $\mathcal{H}_{f,U}$ are invariant under $\mathcal{K}$ and $\mathcal{H}_{f,NU}\perp\mathcal{H}_{f,U}$. Then the fact that $\mathcal{H}_f$ is generated by $f$ implies that  $\mathcal{H}_{f,U} = \overline{\text{Span}_{\mathbb{C}}\{f_U,\mathcal{K}f_U,...\}}$.
Note that for any eigenvector $h\in\mathcal{H}_f$ of $\mathcal{K}$  associated to eigenvalue $\lambda$, we have an orthogonal decomposition $h = h_{f,U} + h_{f,{NU}}$, where $h_{f,U}\in\mathcal{H}_{f,U}$, $h_{f,{NU}}\in\mathcal{H}_{f,{NU}}$. Then $\mathcal{K}h = \lambda h = \mathcal{K}h_{f,U} +  \mathcal{K}h_{f,{NU}}$. Because $\mathcal{H}_{f,U}$ and $ \mathcal{H}_{f,{NU}}$ are invariant subspaces, $h =  \mathcal{K}h_{f,U}/\lambda +  \mathcal{K}h_{f,{NU}}/\lambda$ is an orthogonal decomposition for which $\mathcal{K}h_{f,U}/\lambda\in\mathcal{H}_{f,U}$, $\mathcal{K}h_{f,{NU}}/\lambda\in\mathcal{H}_{f,{NU}}$, implying that $h_{f,U}$ \YZ{ and } $ h_{f,NU}$ are both  \YZ{$\mathcal{K}-$}\YZ{eigenvectors} of the same eigenvalue. $h_{f,{NU}}$ must be zero because $\mathcal{K}$ acts on $\mathcal{H}_{f,{NU}}$ as the unilateral shift operator. Hence a \YZ{$\mathcal{K}-$eigenvector} 
\YZ{must be} inside $\mathcal{H}_{f,U}$.

\begin{definition}\label{def:f-cyc}
A Hilbert space $\mathcal{H}$ with an unitary operator $U$ is called $f$-cyclic if $\mathcal{H} = \overline{\text{Span}_{\mathbb{C}}\{f,Uf,...\}}$ for some $f\in\mathcal{H}$.
\end{definition}
\begin{theorem}[Spectral theorem for unitary operator]\label{thm:unitary spectral}
Let $\mathcal{H}$ be a Hilbert space and $U$ an unitary operator on $\mathcal{H}$. Assume that $\mathcal{H}$ is $f$-cyclic for some $f\in\mathcal{H}$. Then there exists a finite measure $\nu_f$ on the unit circle $S^1\subset\mathbb{C}$, and an isomorphism $\phi$
\begin{align}
 \phi: \mathcal{H} &\rightarrow L^2(S^1,d\nu_f)
\end{align}
such that  $\phi\circ U\circ\phi^{-1} (g)(z) = zg(z)$, for any $g\in L^2(S^1,d\nu_f)$ and any $z\in S^1$. In particular, $\phi(f) = 1$.
\end{theorem}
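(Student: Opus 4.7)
The plan is to reduce the theorem to a classical moment problem on $S^1$ via Herglotz's theorem. First I would define the sequence of moments $c_n := \langle U^n f, f\rangle$ for $n \in \mathbb{Z}$; unitarity gives $c_{-n} = \overline{c_n}$. For any finitely supported complex sequence $(a_k)$, expanding $\bigl\|\sum_k a_k U^k f\bigr\|^2 \geq 0$ yields
\begin{align}
    \sum_{j,k} a_j \overline{a_k}\, c_{j-k} \geq 0,
\end{align}
so $(c_n)_{n\in\mathbb{Z}}$ is a positive-definite sequence. Herglotz's theorem then produces a unique finite positive Borel measure $\nu_f$ on $S^1$ with $c_n = \int_{S^1} z^n\, d\nu_f(z)$ for every $n \in \mathbb{Z}$. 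This supplies the candidate measure on the target side.

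Next I would define $\phi$ on the dense subspace $\mathcal{D} := \text{Span}\{U^n f : n \geq 0\}$ by sending $\sum_k a_k U^k f$ to the polynomial $\sum_k a_k z^k \in L^2(S^1, d\nu_f)$. The identity
\begin{align}
    \langle U^j f, U^k f\rangle_{\mathcal{H}} = c_{j-k} = \int_{S^1} z^{j-k}\,d\nu_f(z) = \langle z^j, z^k\rangle_{L^2(S^1,d\nu_f)}
\end{align}
shows that $\phi$ is well-defined and inner-product preserving on $\mathcal{D}$. Since $\mathcal{D}$ is dense in $\mathcal{H}$ by the cyclicity hypothesis, $\phi$ extends uniquely by continuity to a linear isometry $\phi: \mathcal{H} \to L^2(S^1, d\nu_f)$. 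The intertwining relation $\phi \circ U = M_z \circ \phi$ (with $M_z$ denoting multiplication by $z$) holds on $\mathcal{D}$ and hence on all of $\mathcal{H}$, and $\phi(f) = 1$ follows from taking $n = 0$ in the definition.

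The main obstacle is surjectivity. The image of $\phi$ is a priori only the closure of $\text{Span}\{z^n : n \geq 0\}$ in $L^2(S^1, d\nu_f)$, which in general need not contain $z^{-1}$ (this is the classical Szeg\H{o} alternative). The unitarity of $U$ is precisely what rules this out: since $U$ is surjective on $\mathcal{H}$, one has $U^{-1} f \in \mathcal{H}$, and the intertwining yields $\phi(U^{-1} f) = z^{-1}$ in the image; iterating, the image contains every trigonometric polynomial. By Stone--Weierstrass the trigonometric polynomials are dense in $C(S^1)$, and $C(S^1)$ is dense in $L^2(S^1, d\nu_f)$ for any finite Borel measure, so $\phi$ is surjective and hence an isometric isomorphism of Hilbert spaces with all the asserted properties.
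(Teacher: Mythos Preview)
Your proof is correct and self-contained. The paper does not actually prove this theorem: it simply cites Lemma~5.4 of Borthwick's \emph{Spectral Theory} and moves on, remarking only that the cited lemma is stated under the (a~priori weaker) two-sided cyclicity hypothesis $\mathcal{H} = \overline{\text{Span}\{\ldots,U^{-1}f,f,Uf,\ldots\}}$, which the one-sided hypothesis of Definition~\ref{def:f-cyc} implies.

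Your route via Herglotz's theorem is the standard moment-problem construction and is essentially what lies behind Borthwick's lemma. The point you single out as the ``main obstacle'' is exactly the place where the argument requires care: with only non-negative powers in the cyclic hypothesis, the range of $\phi$ is initially just the $L^2$-closure of the analytic polynomials, and one must invoke unitarity of $U$ to pull back $z^{-1}$ into the range. Your computation $M_z\,\phi(U^{-1}f)=\phi(f)=1$, together with the fact that $M_z$ is unitary on $L^2(S^1,d\nu_f)$, is the clean way to do this; after that, Stone--Weierstrass finishes surjectivity. In effect you have written out the passage from the one-sided to the two-sided cyclic situation that the paper only asserts.
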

See lemma 5.4 in \cite{spectraltheoryBorthwick2020} for a mathematical proof. Note that lemma 5.4 in \cite{spectraltheoryBorthwick2020} assumes that $\mathcal{H} = \overline{\text{Span}_{\mathbb{C}}\{...,U^{-1}f,f,Uf,...\}}$, which is a weaker assumption than $\mathcal{H}$ being $f-$cyclic in the sense of Definition 1. Therefore Theorem \ref{thm:unitary spectral} applies to $\mathcal{H}_{f,U}$ and $\mathcal{K}$. In general, the spectrum measure $\nu$ consists of the discrete component, the singular-continuous component, and the absolutely continuous component (with respect to Lebesgue measure): $\nu = \nu_d + \nu_{sc} + \nu_{ac}$. The three components are pairwise-orthogonal, in the sense that for any $g\in L^2(S^1,d\nu)$, we can write $g = g_{d} + g_{sc} + g_{ac}$ such that $\nu_d(g_d) = \nu(g_d)$,  $\nu_d(g_{sc}) = \nu_d(g_{ac}) = 0$, similarly for $\nu_{sc}$ and $\nu_{ac}$.
Together with theorem \ref{thm:Wold}, this suggests the orthogonal decomposition  \YZ{$\mathcal{H}_f = \mathcal{H}_{f,d}+\mathcal{H}_{f,NU}+\mathcal{H}_{f,sc} + \mathcal{H}_{f,ac}$}, and 
\begin{align}
f = f_{NU} + f_{d} + f_{sc} + f_{ac}.\label{eq:f_decomp}
\end{align}
\YZ{It is easy to see that $\mathcal{H}_{f,d},\mathcal{H}_{f,NU},\mathcal{H}_{f,sc}$ and $\mathcal{H}_{f,ac}$ are invariant subspaces of $\mathcal{K}$.}
In particular, the discrete part $\nu_d$ is a finite or countable sum of Dirac measures $\nu_d= \displaystyle\sum_i |a_i|^2\delta_{\xi_i}$, where $\xi_i\in S^1$ is the support of $\delta_{\xi_i}$. Hence we can write
\begin{align}
    \phi(f_d) = \sum_{i} \mathds{1}_{\xi_i}, 
\end{align}
where $\mathds{1}_{\xi_i}(z) = 1$ if $z=\xi_i$ and 0 otherwise.

As such, for every $\xi_i\in\text{Supp}(\nu_d)$, $\phi^{-1}(\mathds{1}_{\{\xi_i\}})$ is an \YZ{eigenvector} of $\mathcal{K}$. On the other hand, let $h\in \mathcal{H}_{f,U}$ be an \YZ{eigenvector} of $\mathcal{K}$, i.e. $\mathcal{K}h = \xi h$ for some $\xi\in S^1$. Then $\|\xi\phi(h) - z\phi(h)\|^2 = 0$. Let $A = \{z:z\phi(h)(z)\neq \xi\phi(h)(z)\}$, then $\nu(|\mathds{1}_{A}\phi(h)|^2) = 0$, meaning that $\mathds{1}_A\phi(h) = 0$ in $L^2(S^1,d\nu)$. Hence $\phi(h) = \mathds{1}_{\{\xi\}}\phi(h)$, and $\nu(\{\xi\}) > 0$. This shows that there is a one-to-one correspondence between the support of the discrete measure $\nu_d$ and the \YZ{$\mathcal{K}-$eigenvectors} inside $\mathcal{H}_{f,U}$. \YZ{In particular, all the eigenvectors of $\mathcal{K}$ are simple.}

Let $v_i$ be the corresponding normalized  \YZ{$\mathcal{K}-$eigenvectors}, then we have 
\begin{align}
    f = \sum_{i}a_iv_i + f_{sc} + f_{ac} + f_{NU}.\label{eq:f decompose}
\end{align}

\YZ{
Our goal is to evaluate $|a_i|^2$, the numerical tool is the Gram matrix $G_{NM}(f)$, where the $(i,j)-$entry is 
\begin{align}
    G_{NM,ij}(f) = \frac{1}{M}\sum_{t=0}^Mf(i+t)\bar{f}(j+t).
\end{align}
We also define the autocovariance matrix
\begin{align}
    C_{N}(f)  :=& \begin{pmatrix}
    \rho_0, & \rho_1, & \cdots & \rho_{N} \\
    \bar{\rho}_{1}, & \rho_0, & \cdots & \rho_{N-1}\\
    \vdots \\
    \bar{\rho}_{N}, & \bar{\rho}_{N-1}, & \cdots & \rho_0
    \end{pmatrix}\nonumber \\
    =& \begin{pmatrix}
    \inp{f}{f}, & \inp{f}{\mathcal{K}f}, & \cdots & \inp{f}{\mathcal{K}^Nf} \\
    \inp{\mathcal{K}f}{f}, & \inp{\mathcal{K}f}{\mathcal{K}f}, & \cdots & \inp{\mathcal{K}f}{\mathcal{K}^{N}f}\\
    \vdots \\
    \inp{\mathcal{K}^Nf}{f}, & \inp{\mathcal{K}^Nf}{\mathcal{K}f}, & \cdots & \inp{\mathcal{K}^Nf}{\mathcal{K}^{N}f}
    \end{pmatrix}\nonumber
\end{align}
It is obvious that for any $N\in\mathbb{N}$ 
\begin{align}
    \lim_{M\to\infty}G_{NM}(f) = C_{N}(f).
\end{align}
Recall that $f = f_d + f_{NU} + f_{sc} + f_{ac}$ and that each of these four components is orthogonal to all other components. Hence 
\begin{align}
    C_{N}(f) = C_{N}(f_d) + C_{N}(f_{NU}) + C_{N}(f_{sc}) + C_{N}(f_{ac}).
\end{align}
}

\YZ{
Let $d_{NM,1} \geq d_{NM,2} \geq \dots \geq 0$ be the eigenvalues of $G_{NM}(f)$. Let $d_{N,1}\geq d_{N,2} \geq ...\geq 0$ be the eigenvalues of $C_{N}(f)$. It is clear that 
\begin{align}
    \lim_{M\to\infty}d_{NM,i} = d_{N,i}    
\end{align}
Our main result states that:
}
\begin{theorem}[Main result]\label{thm: main result gram}
Assume that \YZ{the autocovariance $\rho_s$ exists for all $s\in\mathbb{N}$}. Let $\{v_i\}$ be the time shift operator \YZ{eigenvectors} of unit length. Let $f \in \mathcal{H}_f$ the Hilbert space of observable time-series with $f = \displaystyle\sum_{i=1}^{\infty}a_iv_i + f_{sc} + f_{ac} + f_{NU}$, where $f_{sc}$ and $f_{ac}$ are the components of $f$ in the space spanned by the singular-continuous spectrum and absolute-continuous spectrum, and $f_{NU}$ the component of $f$ in the completely non unitary subspace (i.e. direct sum of unilateral shift spaces). Assume that $|a_1| \geq |a_2| \geq \dots \geq 0$. Then for any $i$:
\begin{align}
    \lim_{N\rightarrow\infty}\lim_{M\rightarrow\infty}\YZ{\frac{d_{NM,i}}{N}} = |a_i|^2.\label{eq:dnm_a}
\end{align}
\end{theorem}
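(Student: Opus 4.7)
The excerpt has already reduced the inner $M$-limit to the autocovariance matrix via $\lim_{M}d_{NM,i}=d_{N,i}$, so the task is to show $d_{N,i}/N\to |a_i|^2$ for the Toeplitz matrix $C_N(f)$. My plan is to split $f$ according to (\ref{eq:f_decomp}), treat the discrete part by direct diagonalisation, control the remaining three components via a Ces\`aro--Hilbert--Schmidt bound, and glue the two pieces together by Weyl's Hermitian perturbation inequality.

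For the discrete part $f_d = \sum_i a_i v_i$, the relation $\mathcal{K}v_i=\xi_i v_i$ with $|\xi_i|=1$ gives $\rho_l^d = \sum_i|a_i|^2\xi_i^l$, and therefore $C_N(f_d) = \sum_i |a_i|^2 w_i w_i^*$ with $w_i := (1,\bar\xi_i,\ldots,\bar\xi_i^N)^\top$. Setting $\hat w_i := w_i/\sqrt{N+1}$, one has $\|\hat w_i\|=1$, while a finite geometric-series computation gives $\langle \hat w_i,\hat w_j\rangle\to 0$ as $N\to\infty$ whenever $\xi_i\neq\xi_j$. For any truncation level $k$, the non-zero eigenvalues of $\sum_{i\leq k}|a_i|^2\hat w_i\hat w_i^*$ coincide with those of the $k\times k$ positive matrix $D_k^{1/2}G_k^{(N)}D_k^{1/2}$, where $D_k = \mathrm{diag}(|a_1|^2,\ldots,|a_k|^2)$ and $G_k^{(N)}$ is the Gram matrix of $\hat w_1,\ldots,\hat w_k$; since $G_k^{(N)}\to I_k$, these eigenvalues converge to $|a_1|^2,\ldots,|a_k|^2$, and the ordering is built in by the assumption $|a_1|\geq |a_2|\geq\cdots$. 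The tail $\sum_{i>k}|a_i|^2\hat w_i\hat w_i^*$ is positive semidefinite with operator norm bounded uniformly in $N$ by its trace $\sum_{i>k}|a_i|^2$, which tends to $0$ as $k\to\infty$; Weyl's inequality together with $k\to\infty$ then yields $d_{N,i}(C_N(f_d))/(N+1)\to|a_i|^2$.

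For $f_c := f_{sc}+f_{ac}+f_{NU}$, orthogonality of the four $\mathcal{K}$-invariant subspaces in (\ref{eq:f_decomp}) gives $C_N(f) = C_N(f_d)+C_N(f_c)$, and it suffices to show $\|C_N(f_c)\|_{\mathrm{op}} = o(N)$. The Toeplitz identity
\begin{align}
\|C_N(f_c)\|_{HS}^2 = \sum_{l=-N}^{N}(N+1-|l|)|\rho_l^c|^2 \leq 2(N+1)\sum_{l=0}^{N}|\rho_l^c|^2\nonumber
\end{align}
reduces the claim to the Ces\`aro vanishing $N^{-1}\sum_{l<N}|\rho_l^c|^2\to 0$. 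For the $f_{sc}+f_{ac}$ contribution this is Wiener's lemma applied to the atomless spectral measure produced by Theorem \ref{thm:unitary spectral}, which I would either cite directly or reprove via a Fej\'er-kernel computation (the ``alternative proof of weak mixing'' advertised in the abstract). For $f_{NU}$, the Wold form $f_{NU} = \sum_n \mathcal{K}^n g_n$ with $g_n$ in the wandering subspace and $\sum_n\|g_n\|^2<\infty$, together with the pairwise orthogonality of the subspaces $\mathcal{K}^n L$, yields
\begin{align}
|\rho_l^{NU}| = \Bigl|\sum_{n\geq 0}\langle g_{n+l},g_n\rangle\Bigr|\leq \|f_{NU}\|\cdot\Bigl(\sum_{k\geq l}\|g_k\|^2\Bigr)^{1/2},\nonumber
\end{align}
which tends to $0$ as $l\to\infty$. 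Positivity of $C_N(f_c)$ then upgrades the Hilbert--Schmidt bound to $d_{N,1}(C_N(f_c))\leq \|C_N(f_c)\|_{HS} = o(N)$.

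A final application of Weyl's Hermitian perturbation bound gives $|d_{N,i}(C_N(f))-d_{N,i}(C_N(f_d))|\leq \|C_N(f_c)\|_{\mathrm{op}} = o(N)$, and division by $N+1$ combines the previous two paragraphs to finish. The hardest step, I anticipate, is the discrete one: the uniform-in-$N$ control of the infinite tail $\sum_{i>k}|a_i|^2\hat w_i\hat w_i^*$ combined with the eigenvalue-order bookkeeping forced by the decreasing rearrangement of the $|a_i|$. The NU contribution to the Ces\`aro vanishing is the secondary novelty, since it falls outside the spectral-theoretic Wiener lemma and requires the Wold structure explicitly.
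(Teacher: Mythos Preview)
Your proposal is correct and follows the same overall architecture as the paper: split $C_N(f)=C_N(f_d)+C_N(f_c)$ via the orthogonal decomposition \eqref{eq:f_decomp}, show $\|C_N(f_c)\|_{\mathrm{op}}=o(N)$ by reducing to the Ces\`aro vanishing of $|\rho_l^c|^2$ (Wiener/weak mixing for $f_{sc}+f_{ac}$, $\rho_l\to 0$ for $f_{NU}$), and handle $C_N(f_d)$ through the rank-one expansion $\sum_i|a_i|^2 w_iw_i^*$ with the asymptotic orthogonality $\langle\hat w_i,\hat w_j\rangle\to 0$ plus a tail truncation bounded by $\sum_{i>k}|a_i|^2$. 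The paper's Lemmas~\ref{lem:contr}--\ref{lem:nu_d1} and Propositions~\ref{prop:f_s case}--\ref{prop:f_c} implement exactly these steps, with cosmetic differences (the paper phrases the $o(N)$ bound variationally as $\max_{\|\alpha\|=1}\|\sum_i\alpha_i\mathcal{K}^ig\|^2/N$ rather than through the Hilbert--Schmidt norm, and reaches $\rho_l^{NU}\to 0$ via the contraction Lemma~\ref{lem:contr} instead of your explicit Wold coordinates).

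The one substantive difference is how the general index $i$ is obtained. The paper proves only the case $i=1$ and then asserts a recursion ``by removing $a_iv_i$ from $f$ at each step'', without spelling out why the second eigenvalue of $C_N(f)$ matches the first eigenvalue of $C_N(f-a_1v_1)$. Your route---identifying the nonzero eigenvalues of the truncated discrete part with those of $D_k^{1/2}G_k^{(N)}D_k^{1/2}\to D_k$, and then invoking Weyl's Hermitian perturbation inequality twice (once for the tail $i>k$, once for $C_N(f_c)$)---handles all $i$ simultaneously and makes that step fully rigorous. This is a genuine tightening of the paper's argument at no extra cost.
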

\YZ{
For a given observable $f$, the trajectory matrix is defined as:
\begin{align}
    A_{NM}(f) 
    &= \begin{pmatrix}
    f(0), & f(1), & \dots & f(M) \\
    f(1), & f(2), & \dots & f(M+1) \\
    \vdots \\
    f(N), & f(N+1), & \dots & f(N+M)
    \end{pmatrix}.\label{A_NM}
\end{align}
Then it is clear that 
\begin{align}
    G_{NM}(f) = \frac{1}{M}A_{NM}(f)A_{NM}(f)^*
\end{align}
where $A^*$ refers to the conjugate transpose of $A$. Let $\delta_{NM,1}\geq \delta_{NM,2}\geq \dots \geq 0$ be the singular values of $A_{NM}(f)$. Then directly we have that $\delta_{NM,i} = \sqrt{Md_{NM,i}}$.
\begin{corollary}[Trajectory matrix version]\label{cor:main_traj}
Assume $\rho_s$ exists for all $s\in\mathbb{N}$. Then 
\begin{align}
    \lim_{N\to\infty}\lim_{M\to\infty}\frac{\delta_{NM,i}^2}{NM} = |a_i|^2.\label{eq:cnm_a}
\end{align}
\end{corollary}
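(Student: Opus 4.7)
The plan is to observe that Corollary \ref{cor:main_traj} is essentially a cosmetic restatement of Theorem \ref{thm: main result gram} after converting eigenvalues of the Gram matrix into singular values of the trajectory matrix. So I would not attempt any new spectral analysis; I would just chase the algebraic identity linking $\delta_{NM,i}$ to $d_{NM,i}$.

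First I would record the relation $G_{NM}(f) = \tfrac{1}{M}A_{NM}(f)A_{NM}(f)^*$ which is stated just above the corollary. Since $A_{NM}(f)A_{NM}(f)^*$ is positive semidefinite, its eigenvalues are exactly the squared singular values of $A_{NM}(f)$, i.e. the $\delta_{NM,i}^2$. Multiplying by $1/M$ scales the spectrum by $1/M$, so I get the clean identity $d_{NM,i} = \delta_{NM,i}^2/M$, equivalently $\delta_{NM,i}^2 = M\, d_{NM,i}$, exactly as already noted in the paragraph preceding the statement. This is the only algebraic step needed, and there is no subtlety about ordering the eigenvalues because both sequences are ordered non-increasingly and the map $x\mapsto Mx$ preserves that order.

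Now I would simply substitute:
\begin{align}
\frac{\delta_{NM,i}^2}{NM} \;=\; \frac{M\, d_{NM,i}}{NM} \;=\; \frac{d_{NM,i}}{N}. \nonumber
\end{align}
Taking the iterated limit $\lim_{N\to\infty}\lim_{M\to\infty}$ on both sides and invoking Theorem \ref{thm: main result gram}, which asserts $\lim_{N\to\infty}\lim_{M\to\infty} d_{NM,i}/N = |a_i|^2$, immediately yields the desired equality $\lim_{N\to\infty}\lim_{M\to\infty} \delta_{NM,i}^2/(NM) = |a_i|^2$.

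There is really no obstacle here; the statement contains no new analytic content beyond Theorem \ref{thm: main result gram}. If anything, the only thing one must be careful about is that the relation $\delta_{NM,i}^2 = M\, d_{NM,i}$ holds term by term in the ordered spectra (i.e. the $i$-th largest singular value of $A_{NM}(f)$ corresponds to the $i$-th largest eigenvalue of $G_{NM}(f)$), which follows from the monotonicity of the map $x \mapsto M x$. All the hard work — the Wold decomposition, the spectral theorem for unitary operators, weak mixing of the continuous spectrum components, and the limit of the renormalized Gram eigenvalues — has already been done inside Theorem \ref{thm: main result gram}.
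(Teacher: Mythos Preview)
Your proposal is correct and matches the paper's own treatment: the paper does not give a separate proof of this corollary but simply records the identity $\delta_{NM,i}=\sqrt{M\,d_{NM,i}}$ (equivalently $\delta_{NM,i}^2/(NM)=d_{NM,i}/N$) and states the corollary as an immediate consequence of Theorem~\ref{thm: main result gram}. There is nothing to add.
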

}
\begin{remark}
The Gramian matrix is used by singular spectrum analysis methods (\cite{Ghil2002AdvancedSM}) to construct temporal modes of the given time series. The eigenfunctions of $G_{NM}$ are called temporal empirical orthogonal functions (EoFs). Theorem \ref{thm: main result gram}  implies that the leading temporal EoFs are \YZ{due to} theoretical \YZ{eigenfrequencies} \YZ{of $\mathcal{K}$}. Similarly, the data-adaptive harmonic decomposition (DAHD, \cite{Kondrashov2020DataadaptiveHA}) and Hankel alternative view of Koopman analysis (HAVOK, \cite{Brunton2017ChaosAA}) are based on Gramian  matrix and \YZ{trajectory} matrix, respectively. The main theorem and the corollary directly provides a way to identify which features extracted by SSA, DAHD, or HAVOK are related to  \YZ{eigenfrequencies of $\mathcal{K}$}, and which features are not.
\end{remark}

\YZ{The main result can be summarized as the following abstract mathematical theorem with respect to  isometric operators Hilbert space. According to our knowledge, this mathematical result is not shown in any previous literature.
\begin{theorem}[Main result in pure mathematical form]
Let $\mathcal{H}$ be a Hilbert space and $\mathcal{K}$ an isometry of $\mathcal{H}$. The inner product in $\mathcal{H}$ is denoted by $\inp{ }{ }$.  Let $f\in\mathcal{H}$ be any vector. Let 
\begin{align}
    C_{N}(f) = \begin{pmatrix}
    \inp{f}{f} & \inp{f}{\mathcal{K}f} & \cdots & \inp{f}{\mathcal{K}^Nf} \\
    \inp{\mathcal{K}f}{f} & \inp{\mathcal{K}f}{\mathcal{K}f} & \cdots & \inp{\mathcal{K}f}{\mathcal{K}^Nf}\\
    \vdots\\
    \inp{\mathcal{K}^Nf}{f} & \inp{\mathcal{K}^Nf}{\mathcal{K}f} & \cdots & \inp{\mathcal{K}^Nf}{\mathcal{K}^Nf}
    \end{pmatrix}.
\end{align}
Let $v_i$ be the eigenvectors of $\mathcal{K}$ in $\mathcal{H}$. And let
\begin{align}
    f = \sum_{i}a_iv_i + f^{\perp}
\end{align}
be the decomposition of $f$, where $f^{\perp}$ is perpendicular to the subspace spanned by all the eigenvectors of $\mathcal{K}$. There maybe uncountably many $v_i$ in $\mathcal{H}$ but only countably many are included in the summation.
Let $d_{N,1}\geq d_{N,2}\geq \cdots \geq 0$ be  the eigenvalues of $C_{N}$. And assume that $|a_1|\geq |a_2|\geq \cdots \geq 0$.
Then
\begin{align}
    \lim_{N\to\infty}\frac{d_{N,i}}{N} = |a_i|^2
\end{align}
\end{theorem}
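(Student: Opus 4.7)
The plan is to decompose $f$ via the Wold theorem and the spectral theorem for unitary operators (both stated earlier), analyze each spectral component's contribution to $C_N(f)$ separately, and then reconstruct the asymptotics with Weyl's eigenvalue perturbation inequality. First I would restrict to the cyclic invariant subspace $\mathcal{H}_f=\overline{\text{Span}\{\mathcal{K}^n f:n\geq 0\}}$ (nothing is lost since $C_N(f)$ only sees this subspace, and every $\mathcal{K}$-eigenvector with nontrivial coefficient in $f$ lies inside it) and apply Wold plus the spectral theorem to write $f=f_d+f_{sc}+f_{ac}+f_{NU}$, with the four pieces living in pairwise orthogonal $\mathcal{K}$-invariant subspaces. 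Invariance kills the cross terms $\langle\mathcal{K}^p f_\alpha,\mathcal{K}^q f_\beta\rangle$ whenever $\alpha\neq\beta$, so $C_N(f)=C_N(f_d)+C_N(f_{sc})+C_N(f_{ac})+C_N(f_{NU})$.

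Next I would prove that each of $C_N(f_{sc})$, $C_N(f_{ac})$, and $C_N(f_{NU})$ has operator norm $o(N)$. For the continuous unitary part $f_c:=f_{sc}+f_{ac}$, the spectral theorem identifies Gram entries with Fourier coefficients of the continuous spectral measure, $\langle\mathcal{K}^p f_c,\mathcal{K}^q f_c\rangle=\hat{\nu}_c(p-q)$. Wiener's theorem (the analytic face of the weak-mixing property) then gives $\frac{1}{N+1}\sum_{k=0}^N|\hat{\nu}_c(k)|^2\to\sum_{z\in S^1}\nu_c(\{z\})^2=0$ because $\nu_c$ has no atoms, and the Hilbert-Schmidt bound $\|C_N(f_c)\|_{op}^2\leq\|C_N(f_c)\|_{HS}^2\leq 2(N+1)\sum_{k=0}^N|\hat{\nu}_c(k)|^2$ promotes this Cesaro vanishing to the required $o(N)$ operator-norm estimate. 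For the completely non-unitary part the unilateral-shift structure forces $\langle\mathcal{K}^n f_{NU},f_{NU}\rangle\to 0$ by a Cauchy-Schwarz estimate on the shrinking $\ell^2$ tail of $f_{NU}$, and Cesaro averaging combined with the same Hilbert-Schmidt argument gives $\|C_N(f_{NU})\|_{op}=o(N)$.

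The discrete piece is handled by finite truncation. For $K\geq i$ set $f_d^{(K)}:=\sum_{j\leq K}a_j v_j$; since $\mathcal{K}v_j=\xi_j v_j$ and the $v_j$ are orthonormal, $C_N(f_d^{(K)})=\sum_{j=1}^K|a_j|^2 u_{N,j}u_{N,j}^*$ with $u_{N,j}=(1,\xi_j,\ldots,\xi_j^N)^T$. Since the $\xi_j$ are distinct points on $S^1$, the $K\times K$ matrix $G_N^{(K)}$ with entries $(N+1)^{-1}\langle u_{N,k},u_{N,j}\rangle$ consists of bounded geometric sums and converges entrywise to $I_K$; identifying the nonzero eigenvalues of $(N+1)^{-1}C_N(f_d^{(K)})$ with those of the $K\times K$ product $\operatorname{diag}(|a_1|^2,\ldots,|a_K|^2)\cdot G_N^{(K)}$, these eigenvalues converge to $|a_1|^2,\ldots,|a_K|^2$ in decreasing order. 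The tail $f_d^{>K}:=\sum_{j>K}a_j v_j$ is controlled by the trace: $\|C_N(f_d^{>K})\|_{op}\leq\operatorname{tr}C_N(f_d^{>K})=(N+1)\sum_{j>K}|a_j|^2$, so dividing by $N$ yields a bound of $\sum_{j>K}|a_j|^2+o(1)$ that vanishes uniformly in $N$ as $K\to\infty$.

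Finally I would glue everything together with Weyl's inequality: writing $N^{-1}C_N(f)=N^{-1}C_N(f_d^{(K)})+R_N^{(K)}$ with $\|R_N^{(K)}\|_{op}\leq\|C_N(f_c)\|_{op}/N+\|C_N(f_{NU})\|_{op}/N+\|C_N(f_d^{>K})\|_{op}/N$, Weyl gives $|d_{N,i}/N-d_{N,i}(C_N(f_d^{(K)}))/N|\leq\|R_N^{(K)}\|_{op}$. Taking $\limsup_N$ collapses the first two remainder terms, bounding the discrepancy by $\sum_{j>K}|a_j|^2$, and then sending $K\to\infty$ produces the claimed $d_{N,i}/N\to|a_i|^2$. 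The main obstacle I expect is the continuous-spectrum step: upgrading the Wiener/weak-mixing Cesaro vanishing of $|\hat{\nu}_c(k)|^2$ into an operator-norm bound on an $(N+1)\times(N+1)$ Toeplitz-type matrix requires the Hilbert-Schmidt detour above, and a parallel but slightly different shift-tail argument for the NU component; everything else reduces to the orthogonal decomposition, the rank-$K$ atomic truncation, and Weyl gluing.
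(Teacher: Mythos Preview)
Your proposal is correct and shares the paper's backbone: Wold plus the spectral theorem to split $f=f_d+f_{sc}+f_{ac}+f_{NU}$, additivity of $C_N$ across the four orthogonal $\mathcal{K}$-invariant pieces, Wiener's lemma (what the paper calls the weakly mixing property) for the continuous part, and finite truncation of the atomic measure for the discrete part. Two technical choices differ from the paper and are worth highlighting. First, for the continuous and non-unitary pieces the paper works through the variational identity $\|C_N(g)\|=\max_{\|\alpha\|=1}\|\sum_i\alpha_i\mathcal{K}^i g\|^2$ and bounds the resulting polynomial integral directly, whereas your Hilbert--Schmidt bound $\|C_N\|_{op}^2\leq\|C_N\|_{HS}^2\leq 2(N+1)\sum_{k\leq N}|\hat\nu_c(k)|^2$ converts the Wiener Ces\`aro statement into the required $o(N)$ operator-norm estimate in one line; this is cleaner and avoids the Cauchy--Schwarz gymnastics of the paper's Proposition~6. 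Second, the paper proves the result only for the top eigenvalue and then says one ``recursively deduces'' the general case by stripping off $a_1v_1$; your use of Weyl's inequality handles all ordered eigenvalues simultaneously and makes the passage from the rank-$K$ truncation to the full statement explicit. Both routes rely on exactly the same spectral input, so neither is more general, but yours packages the perturbation step more transparently.
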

}

\section{Proof of the main theorem}
We first present several lemmas which are independent of the language of Koopman theory.
\begin{lemma}\label{lem:contr}
Let \YZ{$\mathcal{T}$} be a contraction on a Hilbert space $H$. Then  
for every $f, g \in \mathcal{H}_{NU}$
\[
\lim_{n\to \infty}\langle \YZ{\mathcal{T}}^n f, g\rangle_{\mathcal{H}} =0
\]
\end{lemma}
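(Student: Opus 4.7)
The plan is to reduce the lemma to a standard harmonic-analysis fact via the Sz\"okefalvi-Nagy unitary dilation. First I would observe that $\mathcal{H}_{NU} = \mathcal{H}_U^\perp$ is itself reducing for $\mathcal{T}$ (since $\mathcal{H}_U$ is), so the restriction $\mathcal{T}' := \mathcal{T}|_{\mathcal{H}_{NU}}$ is a well-defined contraction on $\mathcal{H}_{NU}$. Moreover, $\mathcal{T}'$ is completely non-unitary by the maximality of $\mathcal{H}_U$: any reducing subspace $M \subset \mathcal{H}_{NU}$ on which $\mathcal{T}'$ restricts to a unitary would also be reducing for $\mathcal{T}$, forcing $M \subset \mathcal{H}_U \cap \mathcal{H}_{NU} = \{0\}$.

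Next I would invoke Sz\"okefalvi-Nagy's unitary dilation theorem to obtain a Hilbert space $K \supset \mathcal{H}_{NU}$ and a unitary $U$ on $K$ satisfying $\mathcal{T}'^n = P_{\mathcal{H}_{NU}} U^n |_{\mathcal{H}_{NU}}$ for all $n \geq 0$, where $P_{\mathcal{H}_{NU}}$ is the orthogonal projection. For $f, g \in \mathcal{H}_{NU}$ this rewrites
\begin{align}
\langle \mathcal{T}^n f, g\rangle_{\mathcal{H}} = \langle U^n f, g\rangle_K = \int_{S^1} z^n\, d\mu_{f,g}(z),
\end{align}
where $\mu_{f,g}$ is the complex spectral measure of $U$ paired with $(f, g)$. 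The crucial input is then the Sz\"okefalvi-Nagy--Foias absolute continuity theorem: the spectral measure of the minimal unitary dilation of a completely non-unitary contraction is absolutely continuous with respect to Lebesgue measure on $S^1$. Given this, $\mu_{f,g} \ll d\theta$, and the Riemann--Lebesgue lemma forces $\int_{S^1} z^n d\mu_{f,g}(z) \to 0$, which is exactly the claim.

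The main obstacle is precisely the absolute continuity statement, which is a substantive piece of Sz\"okefalvi-Nagy--Foias dilation theory rather than an elementary computation. For the paper's concrete use case, where the lemma will only be applied to the isometry $\mathcal{K}$, one can avoid the dilation machinery altogether by appealing to the Wold decomposition already stated in Theorem \ref{thm:Wold}: write $\mathcal{H}_{NU} = \bigoplus_s \mathcal{H}_s$ where $\mathcal{T}|_{\mathcal{H}_s}$ is a unilateral shift. On a single shift component, writing $f = (f_0, f_1, \ldots)$ and $g = (g_0, g_1, \ldots)$, Cauchy--Schwarz gives $|\langle \mathcal{T}^n f, g\rangle|^2 \leq \|f\|^2 \sum_{k \geq n} \|g_k\|^2 \to 0$, and a routine $\varepsilon/3$ truncation of the direct sum over $s$ finishes the argument. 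This elementary route would be my preferred proof for this paper, with the dilation-theoretic argument reserved for the general contraction case.
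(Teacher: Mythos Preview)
Your proposal is correct on both branches, but the paper's proof takes a third route that is both elementary and fully general. The paper does not invoke dilations, absolute continuity, or the Wold structure of $\mathcal{H}_{NU}$ at all. Instead it exploits directly the characterization $\mathcal{H}_U=\bigcap_{k\geq 1}(\mathrm{fix}(\mathcal{T}^{*k}\mathcal{T}^k)\cap\mathrm{fix}(\mathcal{T}^{k}\mathcal{T}^{*k}))$ from the Sz\H{o}kefalvi-Nagy--Foias theorem quoted just above the lemma. From the monotone convergence of $\|\mathcal{T}^n f\|$ it computes $\|\mathcal{T}^{*k}\mathcal{T}^{k}\mathcal{T}^n f-\mathcal{T}^n f\|^2\leq \|\mathcal{T}^n f\|^2-\|\mathcal{T}^{n+k}f\|^2\to 0$, so $\langle \mathcal{T}^n f,g\rangle\to 0$ for every $g\in\mathrm{ran}(I-\mathcal{T}^{*k}\mathcal{T}^k)$; the symmetric argument with $\mathcal{T}^*$ covers $g\in\mathrm{ran}(I-\mathcal{T}^{k}\mathcal{T}^{*k})$. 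Taking orthogonal complements and intersecting over $k$ identifies the closure of the span of these ranges with $\mathcal{H}_U^\perp=\mathcal{H}_{NU}$, and the lemma follows.

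Compared to your first route, the paper's argument avoids the substantive absolute-continuity theorem you flagged as the main obstacle; it is a two-line norm estimate plus orthogonality bookkeeping. Compared to your second route, the paper's argument works for arbitrary contractions, not just isometries, so it actually proves the lemma as stated rather than only the special case needed downstream. Your Wold-based argument is perfectly adequate for the paper's application to $\mathcal{K}$, and your dilation argument is a nice conceptual explanation of why the result is true, but the paper's proof is the most economical if one wants the general-contraction statement without importing external theory.
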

\begin{proof}[Proof of lemma \ref{lem:contr}]
For every $f\in \mathcal{H}$ the sequence $(\|\YZ{\mathcal{T}}^n f\|)_{n\in \mathbb{N}}$ is decreasing thus convergent. For any $k\in\mathbb{N}$, we have
\begin{align*}
    &\|\YZ{\mathcal{T}}^{*k} \YZ{\mathcal{T}}^k \YZ{\mathcal{T}}^n f - \YZ{\mathcal{T}}^n f \|^2 = \|\YZ{\mathcal{T}}^{*k} \YZ{\mathcal{T}}^k \YZ{\mathcal{T}}^n f\| - \nonumber \\
    &2 {\mathcal Re}\left( \YZ{\mathcal{T}}^{*k} \YZ{\mathcal{T}}^k \YZ{\mathcal{T}}^n f, \YZ{\mathcal{T}}^n f\right) + \|\YZ{\mathcal{T}}^n f \|^2\\
    =& \|\YZ{\mathcal{T}}^{*k}\YZ{\mathcal{T}}^{n+k} f\|^2 -2 \|\YZ{\mathcal{T}}^{n+k} f\|^2 + \|\YZ{\mathcal{T}}^n f \|^2\\
    \leq& \|\YZ{\mathcal{T}}^{n+k} f\|^2 -2 \|\YZ{\mathcal{T}}^{n+k} f\|^2 + \|\YZ{\mathcal{T}}^n f \|^2 \\
    =& \|\YZ{\mathcal{T}}^n f \|^2 - \|\YZ{\mathcal{T}}^{n+k} f\|^2 \to 0 \text{ as } n\to \infty
\end{align*}
Hence  $\langle (I - \YZ{\mathcal{T}}^{*k}\YZ{\mathcal{T}}^k)\YZ{\mathcal{T}}^n f, g \rangle_{\mathcal H} \to 0$ for every $f,g \in {\mathcal H}$ as $n\to \infty$,
therefore,
\[
\langle \YZ{\mathcal{T}}^n f, g\rangle_{\mathcal H} \to 0  \text{ for every } g\in \rm{ran} (I - \YZ{T_c}^{*k}\YZ{T_c}^k) \text{ as } n\to \infty 
\]
The same argument for $\YZ{\mathcal{T}}^{*}$ yields
\[
\langle \YZ{\mathcal{T}}^n f, g\rangle_{\mathcal H}= \langle  f, \YZ{\mathcal{T}}^{*n} g\rangle_{\mathcal H} \to 0  \text{ for every } g\in \rm{ran} (I - \YZ{\mathcal{T}}^k\YZ{\mathcal{T}}^{*k})\]  as  $n\to \infty$.

We obtain that $\langle \YZ{\mathcal{T}}^n f, g\rangle_{\mathcal H}\to 0$ as $n\to \infty$ for every
\begin{align*}
    f,g &\in \sum_{k\geq 1}(\rm{ran} (I - \YZ{\mathcal{T}}^{*k}\YZ{\mathcal{T}}^k) + \rm{ran} (I - \YZ{\mathcal{T}}^k\YZ{\mathcal{T}}^{*k}))\\
    &= \left(\cap_{k\geq 1} \left(\rm{ran} (I - \YZ{\mathcal{T}}^{*k}\YZ{\mathcal{T}}^k)^{\perp} \cap \rm{ran} (I - \YZ{\mathcal{T}}^k\YZ{\mathcal{T}}^{*k})^{\perp}\right)\right)^{\perp}\\
    &= \left(\cap_{k\geq 1} (\rm{fix} (\YZ{\mathcal{T}}^{*k}\YZ{\mathcal{T}}^k) \cap \rm{fix} (\YZ{\mathcal{T}}^k \YZ{\mathcal{T}}^{*k})) \right)^{\perp} ={\mathcal H}_U^{\perp} = {\mathcal H}_{NU}
\end{align*}
\end{proof}



\begin{lemma}\label{lem:shift_product}
Let $\mathcal{H}$ be a Hilbert space over $\mathbb{C}$ of finite dimension or infinite dimension. $a_0,a_1,...\in\mathcal{H}$ such that $\displaystyle\sum\|a_i\|^2 < \infty$. Then
\begin{align}
    \lim_{k\rightarrow\infty}\sum_{i}|\inp{a_{i}}{a_{i+k}}| = 0
\end{align}
\end{lemma}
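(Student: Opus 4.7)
The plan is to reduce the claim to one about norms by termwise Cauchy--Schwarz, and then to exploit square-summability by splitting the sum at a tail index.

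First, by Cauchy--Schwarz applied entry by entry,
\begin{align}
\sum_{i}\lvert\inp{a_i}{a_{i+k}}\rvert \;\leq\; \sum_{i}\|a_i\|\,\|a_{i+k}\|,
\end{align}
so it suffices to show that $\sum_i\|a_i\|\,\|a_{i+k}\|\to 0$ as $k\to\infty$. Denote $S:=\sum_i\|a_i\|^2<\infty$. Fix $\varepsilon>0$ and choose $N$ so large that $\sum_{j>N}\|a_j\|^2<\varepsilon^2$.

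Next, I would split the sum into the head $i\le N$ and the tail $i>N$. For the head, applying Cauchy--Schwarz to the finite sum gives
\begin{align}
\sum_{i=0}^{N}\|a_i\|\,\|a_{i+k}\| \;\leq\; \Bigl(\sum_{i=0}^{N}\|a_i\|^2\Bigr)^{1/2}\Bigl(\sum_{i=0}^{N}\|a_{i+k}\|^2\Bigr)^{1/2} \;\leq\; \sqrt{S}\,\Bigl(\sum_{j=k}^{k+N}\|a_j\|^2\Bigr)^{1/2}.
\end{align}
As soon as $k>N$ the index set $\{k,\dots,k+N\}$ lies entirely in $\{j>N\}$, so the last factor is at most $\varepsilon$. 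For the tail, a second Cauchy--Schwarz on the infinite sum gives
\begin{align}
\sum_{i>N}\|a_i\|\,\|a_{i+k}\| \;\leq\; \Bigl(\sum_{i>N}\|a_i\|^2\Bigr)^{1/2}\Bigl(\sum_{i>N}\|a_{i+k}\|^2\Bigr)^{1/2} \;\leq\; \varepsilon\sqrt{S},
\end{align}
regardless of $k$. Combining the two pieces, for every $k>N$ one has $\sum_i\|a_i\|\,\|a_{i+k}\|\le 2\varepsilon\sqrt{S}$, and letting $\varepsilon\downarrow 0$ yields the claim.

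There is no real obstacle here: the only subtle point is that one cannot use pointwise smallness of $\|a_j\|$ (the sequence need not be monotone, and individually large terms can occur arbitrarily far out), which is precisely why the argument proceeds via the $\ell^2$ tail rather than via $\|a_j\|\to 0$.
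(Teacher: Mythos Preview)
Your proof is correct and follows the same head/tail split as the paper's own argument: both choose $N$ so that $\sum_{i>N}\|a_i\|^2$ is small, then bound the contributions from $i\le N$ and $i>N$ separately. The only difference is in the bookkeeping. You apply the Cauchy--Schwarz inequality for sums to each piece, whereas the paper uses a pointwise bound $\|a_{i+k}\|<\tfrac{\varepsilon}{2}\|a_i\|$ on the head (which requires introducing a second cutoff $M>N$) and the AM--GM inequality $|\langle a_i,a_{i+k}\rangle|\le\tfrac12(\|a_i\|^2+\|a_{i+k}\|^2)$ on the tail. Your version is marginally more economical since it avoids the auxiliary index $M$.

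One small correction to your closing remark: square-summability does force $\|a_j\|\to 0$, so ``individually large terms'' cannot in fact occur arbitrarily far out, and the paper's proof exploits precisely this pointwise decay to handle the head. Your $\ell^2$-tail route is a perfectly good alternative, but not a forced one.
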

\begin{proof}[Proof of Lemma \ref{lem:shift_product}]
Without loss of generality, we may assume that $\displaystyle\sum_{i}\|a_i\|^2 = 1$. For any $\epsilon > 0$, there exists  $N$ such that $\displaystyle\sum_{i\geq N}\|a_i\|^2 \leq \epsilon/2$. Further there exists $M>N$, such that for any $i<N$ and $j>M$, $\|a_j\| < \frac{\epsilon}{2}\|a_i\|$. Now for any $k>M$,
\begin{align}
    &\sum_{i}|\inp{a_i}{a_{i+k}}| = \sum_{i=1}^N|\inp{a_i}{a_{i+k}}| + \sum_{i>N}|\inp{a_i}{a_{i+k}}|  \\
    \leq& \sum_{i\leq N}\|a_i\|\|a_{i+k}\| + \frac{1}{2}\sum_{i>N}(\|a_i\|^2 + \|a_{i+k}\|^2)\nonumber \\
    \leq& \frac{\epsilon}{2}\sum_{i\leq N}\|a_i\|^2 + \sum_{i>N}\|a_i\|^2\nonumber \\
    \leq& \frac{\epsilon}{2} + \frac{\epsilon}{2} = \epsilon\nonumber
\end{align}
\end{proof}

\begin{lemma}[The weakly mixing property]\label{lem:weakly mixing}
Let $\nu$ be a continuous measure on $S^1$. Then for any $f,g\in L^2(S^1,d\nu)$ and $z \in S^1$
\begin{align}
    \lim_{N\to\infty}\frac{1}{N}\sum_{i=0}^N|\nu(z^if\bar{g})|^2 = 0
\end{align}
\end{lemma}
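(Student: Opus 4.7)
The approach is to rewrite the squared modulus as a double integral over $S^1\times S^1$, swap sum and integral, and then observe that the Cesàro limit of $(z\bar w)^i$ is supported on the diagonal, which has $\nu\times\nu$-measure zero because $\nu$ is continuous.

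First I would expand, for each $i$,
\[
|\nu(z^i f\bar g)|^2 \;=\; \iint_{S^1\times S^1}(z\bar w)^i\,f(z)\overline{g(z)}\,\overline{f(w)}g(w)\,d\nu(z)\,d\nu(w),
\]
sum over $i=0,\dots,N$, divide by $N$, and pull the Cesàro average inside by Fubini. This interchange is legitimate because the integrand is bounded in modulus by $|f(z)g(z)|\cdot|f(w)g(w)|$, which lies in $L^1(\nu\times\nu)$ with total mass $\|f\bar g\|_{L^1(\nu)}^2\le\|f\|_{L^2(\nu)}^2\|g\|_{L^2(\nu)}^2<\infty$ by Cauchy–Schwarz.

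Next, I would invoke the elementary fact that for any $z,w\in S^1$, $\frac{1}{N+1}\sum_{i=0}^N(z\bar w)^i\to\mathds{1}_{\{z=w\}}$ pointwise, with uniform bound $1$ (equal to $1$ on the diagonal, and of modulus at most $\frac{2}{(N+1)|1-z\bar w|}\to 0$ off it by the geometric-series formula). Dominated convergence then yields
\[
\lim_{N\to\infty}\frac{1}{N+1}\sum_{i=0}^N|\nu(z^i f\bar g)|^2 \;=\;\iint_{S^1\times S^1}\mathds{1}_{\{z=w\}}\,f(z)\overline{g(z)}\overline{f(w)}g(w)\,d\nu(z)\,d\nu(w).
\]
Applying Fubini one more time, the inner integration in $z$ produces $\nu(\{w\})\cdot|f(w)g(w)|^2$, which vanishes for every $w$ because $\nu$ is atomless. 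Hence the whole integral is $0$, and replacing $N+1$ by $N$ does not affect the limit.

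\textbf{Expected main obstacle.} The argument is essentially a one-step reduction to Wiener's diagonal-mass identity, so no deep step is required. The one place where care is needed is the justification of the dominated-convergence swap when the pointwise limit is supported on the diagonal: this forces the argument through the $L^1(\nu\times\nu)$ dominant coming from Cauchy–Schwarz, and it is precisely the \emph{continuity} (atomlessness) of $\nu$ that makes the resulting diagonal integral vanish—this is exactly the feature that distinguishes the continuous part of the spectrum from the discrete part, where the same computation would instead reproduce the masses $|a_i|^2$.
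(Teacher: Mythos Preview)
Your argument is correct: it is the classical Wiener-lemma proof, and in fact it coincides with the proof in Halmos that the paper cites as its primary reference for this lemma. The expansion of $|\nu(z^i f\bar g)|^2$ as a double integral, the dominated-convergence swap with dominant $|fg|(z)\,|fg|(w)\in L^1(\nu\times\nu)$, and the vanishing of the diagonal integral by atomlessness are all sound. One small clean-up: rather than writing the inner $z$-integral as $\nu(\{w\})\,|f(w)g(w)|^2$ (which presupposes pointwise finiteness of $|fg|^2$), it is simpler to observe directly that $(\nu\times\nu)(\{z=w\})=\int \nu(\{w\})\,d\nu(w)=0$, so the limiting integrand is supported on a $(\nu\times\nu)$-null set and the integral vanishes without any further integrability condition.

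Where your proof differs from the paper is in the \emph{alternative} proof given in the appendix. There the authors do not compute the Ces\`aro sum directly; instead they first prove Proposition~\ref{prop:f_c} (that $\max_{\|\alpha\|=1}\frac{1}{N}\int_{S^1}|\sum_i\alpha_i z^i|^2\,d\nu_c\to 0$) by a contradiction argument: assuming the max stays bounded below, they extract, via a pigeonhole/compactness lemma on subsets of $S^1$ of uniformly positive $\nu_c$-measure, arbitrarily many common points $\xi_1,\dots,\xi_L$ on which the extremal polynomials are simultaneously large, and this contradicts Lemma~\ref{lem:nu_d1} on the norms of $E_N C$. They then deduce the density form of weak mixing as a corollary. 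Your route is shorter and more standard; the paper's alternative route has the feature that it links the weak-mixing statement directly to the matrix-norm estimates that drive the main theorem, so it is more self-contained within the paper's framework but considerably less direct.
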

\begin{proof}
The proof of this mixing theorem can be found page 39 of \cite{Halmos1956}. An alternative proof is given in the appendix.
\end{proof}

\begin{lemma}\label{lem:nu_d1}
Let $L>0$ and $\xi_1,...,\xi_{L} \in S^1$ such that $\xi_i\neq \xi_j$, and $c_1\geq...\geq c_L > 0$. Then 
\begin{align}
    \lim_{N\rightarrow\infty}\max_{\sum_{i=1}^N|\alpha_i|^2 = 1}\frac{1}{N}\sum_{k=1}^Lc_k^2|\sum_{i=0}^N\alpha_i\xi_k^i|^2 = c_1^2 \label{eq:nu_d equiv}
\end{align}
\end{lemma}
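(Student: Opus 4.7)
The plan is to recast the constrained maximum as the largest eigenvalue of an $L\times L$ Hermitian matrix that differs from $(N+1)\,\mathrm{diag}(c_1^2,\dots,c_L^2)$ by a bounded perturbation, and then conclude by standard eigenvalue stability.

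First, I would encode the problem linearly. Let $\Xi$ be the $(N+1)\times L$ matrix with entries $\Xi_{ik}=\xi_k^i$ and set $D=\mathrm{diag}(c_1,\dots,c_L)$. Writing $\beta_k:=\sum_{i=0}^N\alpha_i\xi_k^i=(\Xi^T\alpha)_k$, the quantity to maximize becomes
\begin{align}
\sum_{k=1}^L c_k^2\Bigl|\sum_{i=0}^N\alpha_i\xi_k^i\Bigr|^2=\|D\Xi^T\alpha\|^2=\alpha^{*}\bar\Xi D^2\Xi^T\alpha.\nonumber
\end{align}
So $\max_{\|\alpha\|=1}$ of this expression is the top eigenvalue of the positive semidefinite matrix $\bar\Xi D^2\Xi^T$. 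Since $\bar\Xi D^2\Xi^T=(\bar\Xi D)(\bar\Xi D)^{*}$ and $(\bar\Xi D)^{*}(\bar\Xi D)=DG_N D$ with $G_N:=\Xi^T\bar\Xi$ having entries $(G_N)_{kl}=\sum_{i=0}^N(\xi_k\bar\xi_l)^i$, the two matrices share the same nonzero spectrum, and the problem reduces to analysing the $L\times L$ Hermitian matrix $DG_N D$.

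Second, I would bound the off-diagonal part of $G_N$. The diagonal entries are exactly $N+1$. For $k\neq l$, since $\xi_k\neq\xi_l$ lie on $S^1$ we have $\xi_k\bar\xi_l\neq 1$, and the geometric sum gives $|(G_N)_{kl}|\le 2/|1-\xi_k\bar\xi_l|$, a constant independent of $N$. Hence $G_N=(N+1)I_L+E_N$ with $\|E_N\|$ bounded uniformly in $N$, so
\begin{align}
\tfrac{1}{N}\,DG_N D=\tfrac{N+1}{N}\,D^2+\tfrac{1}{N}\,DE_N D.\nonumber
\end{align}
By Weyl's inequality, the ordered eigenvalues of $\tfrac{1}{N}DG_N D$ differ from $\tfrac{N+1}{N}c_k^2$ by at most $\|DE_N D\|/N=O(1/N)$. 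In particular, the top eigenvalue converges to $c_1^2$, establishing the $\le$ direction of \eqref{eq:nu_d equiv}.

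For the matching lower bound I would exhibit an explicit near-optimal $\alpha$: choose $\alpha_i=\bar\xi_1^i/\sqrt{N+1}$, which is a unit vector. Then $\sum_i\alpha_i\xi_1^i=\sqrt{N+1}$, giving $c_1^2|\beta_1|^2=(N+1)c_1^2$, while for $k\neq 1$ the geometric sum yields $|\beta_k|^2\le 4/((N+1)|1-\xi_k\bar\xi_1|^2)=O(1/N)$, so $\tfrac{1}{N}\sum_k c_k^2|\beta_k|^2=\tfrac{N+1}{N}c_1^2+O(1/N)\to c_1^2$. The only delicate point is the uniform bound on $\|E_N\|$, which rests on the distinctness of the $\xi_k$ on the unit circle; because $L$ is finite this is immediate, and the remainder of the argument is standard spectral perturbation.
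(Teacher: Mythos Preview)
Your proof is correct and follows essentially the same route as the paper: both reduce the problem to the $L\times L$ Gram matrix of the (normalized) columns $(1,\xi_k,\dots,\xi_k^N)^T$ and use that its off-diagonal entries are $O(1)$ so that the rescaled Gram matrix tends to $I_L$; the paper phrases this via Gram--Schmidt (the change-of-basis matrix $S_N\to I_L$, hence the singular values of $E_NC$ converge to $c_1,\dots,c_L$), while you phrase it via Weyl's inequality applied to $D G_N D/N$. Your explicit test vector for the lower bound is not needed, since Weyl already controls the top eigenvalue from both sides.
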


\begin{proof}[Proof of lemma \ref{lem:nu_d1}]
Let 
\begin{align}
    E_{N} &= \frac{1}{\sqrt{N+1}}\begin{pmatrix}
    1 & 1 & \dots & 1 \\
    \xi_1 & \xi_2 & \dots & \xi_L \\
    \vdots \\
    \xi_1^N & \xi_2^N & \dots & \xi_L^N 
    \end{pmatrix}\nonumber \\
    &= (\Phi_{N,1},...,\Phi_{N,L}),\\
    \text{and }  C &= \text{diag}(c_1,...,c_L)
\end{align}
Then $\|\Phi_{N,k}\| = 1$ and $\inp{\Phi_{N,k}}{\Phi_{N,l}} =  \frac{1 - (\xi_k\xi_l^{-1})^{N+1}}{(N+1)(1-\xi_k\xi_l^{-1})}\rightarrow 0$ as $N\rightarrow \infty$.
Applying the Gram-Schmidt process to $E_N$, we get a matrix $\tilde{E}_N = E_NS_N$, so that the columns of $\tilde{E}_N$ are orthogonal to each other and of unit length. $S_N\rightarrow I_L$ as $N\rightarrow\infty$, where $I_L$ refers to the identity matrix. Hence $\displaystyle\lim_{N\rightarrow\infty}S_NC = C$.  As a consequence the leading singular value $d_{N,1}$ of $E_NC$ converges to $c_1$. On the other hand, for any $\alpha = (\alpha_0,...,\alpha_N)\in\mathbb{C}^N$ with $\|\alpha\|=1$, direct computation shows that 
\begin{align}
    \|\alpha^{\top} E_NC\|^2 = \frac{1}{N+1}\sum_{k=1}^Lc_k^2|\sum_{i=0}^N\alpha_i\xi_k^i|^2.
\end{align}
Hence 
\begin{align}
    \max_{|\alpha|^2 = 1}\frac{1}{N+1}\sum_{k=1}^Lc_k^2|\sum_{i=0}^N\alpha_i\xi_k^i|^2 = \|E_NC\|^2 = d_{N,1}^2\rightarrow c_1^2 
\end{align}
\end{proof}

Now we can start to prove the main result. Recall that $f = f_d + f_{sc} + f_{ac} + f_{NU}$, and \YZ{$C_{N}(f) = C_{N}(f_{NU}) + C_{N}(f_d) + C_{N}(f_{ac}) + C_{N}(f_{sc})$}.  For any \YZ{semi positive-definite} matrix \YZ{$A\in\mathbb{C}^{N\times N}$}, we define \YZ{$\|A\| = \displaystyle\max_{v\in\mathbb{C}^{N}}\frac{v^{\top}A\bar{v}}{\|v\|^2}$}. The maximal  \YZ{eigenvalue} $d_1$ of $A$ is equal to $\|A\|$. If Theorem \ref{thm: main result gram} holds for $i=1$, we can then recursively deduce Theorem \ref{thm: main result gram} for all $i$ by removing $a_iv_i$ from $f$ at each step. Since 
\begin{align}
    \lim_{M\to\infty}G_{NM}(f) = C_{N}(f),
\end{align}
It is thus sufficient to prove that:
\YZ{
\begin{align}
    &\lim_{N\rightarrow\infty}\frac{\|C_{N}(f_{NU})\|}{N} = \lim_{N\rightarrow\infty}\frac{\|C_{N}(f_{ac})\|}{N} \nonumber \\
    =& \lim_{N\rightarrow\infty}\frac{\|C_{N}(f_{sc})\|}{N} = 0,\label{eq:ANM/NM s sc ac}
\end{align}
}
and that
\begin{align}
    \YZ{\lim_{N\rightarrow\infty}\frac{\|C_{N}(f_{d})\|}{N}} = |a_1|^2.\label{eq:ANM/NM d}
\end{align}
Now fix $N$, for any \YZ{$g\in \mathcal{H}_f$},
\begin{align}
    \YZ{\|C_{N}(g)\|} &=\max_{\alpha}\frac{\|\sum_{i=0}^N\alpha_i\mathcal{K}^ig\|^2}{|\alpha_1|^2+...+|\alpha_N|^2}
\end{align}

Hence Eq.\eqref{eq:ANM/NM s sc ac} and \eqref{eq:ANM/NM d} are equivalent to the following:
\begin{align}
\lim_{N\rightarrow\infty}\max_{\alpha}\frac{\|\sum_{i=0}^N\alpha_i\mathcal{K}^ig\|^2}{N(|\alpha_1|^2+...+|\alpha_N|^2)} = \begin{cases}
|a_1|^2 \!\!\!\!\!\!&\text{if $g=f_d$ } \\
0 &\text{if $g = f_{sc}$,$f_{ac}$ or $f_{NU}$}.\label{eq:alpha X}
\end{cases}
\end{align}
The case when $g=f_{NU}$ can be quickly proved:

\begin{prop}[The case for $f_{NU}$]\label{prop:f_s case}
\begin{align}
\lim_{N\rightarrow\infty}\max_{\alpha}\frac{\|\sum_{i=0}^N\alpha_i\mathcal{K}^if_{NU}\|^2}{N(|\alpha_1|^2+...+|\alpha_N|^2)} = 0
\end{align}
\end{prop}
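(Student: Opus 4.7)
The plan is to reduce the extremization to a statement about the autocorrelation coefficients
\[
c_k := \langle \mathcal{K}^k f_{NU}, f_{NU}\rangle, \qquad k\geq 0,
\]
and then exploit Lemma \ref{lem:contr} to force a Ces\`aro--type decay. Since $\mathcal{K}$ is an isometry one has $\mathcal{K}^{*j}\mathcal{K}^j = I$, so $\langle \mathcal{K}^i f_{NU}, \mathcal{K}^j f_{NU}\rangle = c_{i-j}$ for $i \geq j$ (with $c_{-k}=\bar c_k$), and expanding gives
\[
\left\|\sum_{i=0}^N \alpha_i \mathcal{K}^i f_{NU}\right\|^2 \;=\; \sum_{i,j=0}^N \alpha_i \bar{\alpha}_j\, c_{i-j}.
\]
An isometry is automatically a contraction, so Lemma \ref{lem:contr} applied to $\mathcal{T}=\mathcal{K}$ and $f=g=f_{NU}\in\mathcal{H}_{f,NU}$ yields $c_k\to 0$ as $k\to\infty$, while the isometry property gives the uniform bound $|c_k|\leq \|f_{NU}\|^2$.

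Next I would apply the AM--GM inequality $|\alpha_i\bar\alpha_j|\leq \tfrac{1}{2}(|\alpha_i|^2+|\alpha_j|^2)$ to decouple the two indices:
\[
\left\|\sum_{i=0}^N \alpha_i \mathcal{K}^i f_{NU}\right\|^2 \;\leq\; \sum_{i=0}^N|\alpha_i|^2 \sum_{j=0}^N |c_{i-j}| \;\leq\; \Bigl(\sum_{i=0}^N|\alpha_i|^2\Bigr)\Bigl(|c_0| + 2\sum_{k=1}^N |c_k|\Bigr),
\]
where the inner sum over $j$ is bounded by $\sum_{k=-N}^{N}|c_k|$ regardless of $i$. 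Dividing by $N\sum|\alpha_i|^2$ eliminates the dependence on $\alpha$ entirely, and the proposition reduces to
\[
\lim_{N\to\infty}\frac{|c_0| + 2\sum_{k=1}^N |c_k|}{N}=0,
\]
which is merely the Ces\`aro average of the bounded null sequence $\{|c_k|\}$.

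I do not expect any real obstacle: the whole argument pivots on the two inputs $c_k\to 0$ and $|c_k|$ bounded, both of which are handed to us by Lemma \ref{lem:contr} together with the isometry of $\mathcal{K}$. The only care needed is the Toeplitz identity $\langle \mathcal{K}^i f_{NU},\mathcal{K}^j f_{NU}\rangle=c_{i-j}$ (use $\mathcal{K}^{*j}\mathcal{K}^j=I$, not $\mathcal{K}\mathcal{K}^*=I$, the latter failing on the unilateral shift component), and the observation that $f_{NU}$ sits inside the completely non-unitary subspace, which is exactly the habitat where Lemma \ref{lem:contr} asserts the weak decay $\langle \mathcal{K}^n f,g\rangle\to 0$.
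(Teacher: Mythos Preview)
Your argument is correct and follows essentially the same route as the paper: both expand via the Toeplitz identity $\langle\mathcal{K}^i f_{NU},\mathcal{K}^j f_{NU}\rangle=c_{i-j}$, invoke Lemma~\ref{lem:contr} to obtain $c_k\to 0$, apply the AM--GM bound $|\alpha_i\bar\alpha_j|\leq\tfrac{1}{2}(|\alpha_i|^2+|\alpha_j|^2)$, and finish with a Ces\`aro argument. Your presentation is in fact a bit cleaner, since you invoke the Ces\`aro mean directly rather than splitting the tail at $M_\epsilon$ by hand.
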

\begin{proof}[Proof of proposition \ref{prop:f_s case}]
Without loss of generality, we may assume that $\|\alpha\|=1$. Since $f_{NU}\in\mathcal{H}_{NU} = \bigoplus_{s\geq 0}\mathcal{H}_{s}$, we can write $f_{NU} = (a_0,a_1,...)$, where $a_i\in\mathcal{H}_{s}$. For $k>0$, let $c_{k} = |\inp{\mathcal{K}^if_{NU}}{\mathcal{K}^{i+k}f_{NU}}| = |\inp{\mathcal{K}^{i+k}f_{NU}}{\mathcal{K}^{i}f_{NU}}|$, which does not depend on $i$. Lemma \ref{lem:contr} implies that $\displaystyle\lim_{|i-j|\rightarrow\infty} c_{|i-j|} = 0$. Therefore for any $\epsilon>0$, there exists $M_{\epsilon}$ such that $c_{|i-j|} \leq \epsilon/4$ for any $|i-j|>M_{\epsilon}$. Now for any $N>2M_{\epsilon}\|f_{NU}\|^2/\epsilon$, and any $|\alpha_1|^2 + ...+|\alpha_N|^2 = 1$, 
\begin{align}
    &\|\sum_{i=0}^N\alpha_i\mathcal{K}^if_{NU}\|^2 = \sum_{i,j}\alpha_i\bar{\alpha}_j\inp{\mathcal{K}^if_{NU}}{\mathcal{K}^jf_{NU}}\nonumber \\
    \leq&2\sum_{k=0}^N\sum_{i=0}^{N-k}\alpha_i\bar{\alpha}_{i+k}c_k
    \leq\sum_{k=0}^N\sum_{i=0}^{N-k}(|\alpha_i|^2 + |\alpha_{i+k}|^2)c_k\nonumber \\
    \leq&2\sum_{k=0}^Nc_k\leq 2\sum_{k=0}^{M_{\epsilon}}c_k + 2\sum_{k>M_{\epsilon}}c_k\nonumber \\
    \leq& M_\epsilon \|f_{NU}\|^2 + (N-M_\epsilon)\epsilon/2 
    \leq \frac{N\epsilon}{2} + \frac{N\epsilon}{2} = N\epsilon
\end{align}
\end{proof}

Recall the notations in Theorem \ref{thm:unitary spectral}, for any $g\in\mathcal{H}_{f,U}$,
\begin{align}
    &\|\sum_{i=0}^N\alpha_i\mathcal{K}^ig\|_{\mathcal{H}_f}^2 = \|\sum_{i=0}^N\alpha_iz^i\phi(g)\|_{L^2(S^1,d\nu_f)}^2\nonumber \\
    =& \int_{S^1}|\sum_{i=0}^N\alpha_iz^i|^2|\phi(g)(z)|^2d\nu_f(z)
\end{align}
This proves that
\begin{prop}\label{prop:ANM-S1}
For any $g \in \mathcal{H}_{f,U}$,
\begin{align}
     &\YZ{\lim_{N\rightarrow\infty}\frac{\|C_{N}(g)\|}{N}}\nonumber \\
     =&\lim_{N\rightarrow\infty}\max_{\|\alpha\|=1}\frac{1}{N}\int_{S^1}|\sum_{i=0}^N\alpha_iz^i|^2|\phi(g)(z)|^2d\nu_f(z)
\end{align}

\end{prop}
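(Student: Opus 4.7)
The plan is to show that the claimed identity holds at every fixed $N$, not merely in the limit, by translating $\|C_N(g)\|$ through the spectral isomorphism of Theorem \ref{thm:unitary spectral}; once this is done, passing to the limit is immediate because both sides already agree. The argument reduces to three direct observations stitched together.

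First, I would invoke the fact, already established just before Theorem \ref{thm:unitary spectral}, that $\mathcal{H}_{f,U}$ is $f_U$-cyclic for the restriction of $\mathcal{K}$, which by the Wold decomposition in Theorem \ref{thm:Wold} acts unitarily on $\mathcal{H}_{f,U}$. Theorem \ref{thm:unitary spectral} then provides a finite spectral measure $\nu_f$ on $S^1$ and an isometric isomorphism $\phi:\mathcal{H}_{f,U}\to L^2(S^1,d\nu_f)$ intertwining $\mathcal{K}$ with multiplication by the coordinate $z$. Consequently, for any $g\in\mathcal{H}_{f,U}$ and coefficients $\alpha=(\alpha_0,\ldots,\alpha_N)$, linearity gives $\phi\bigl(\sum_{i=0}^N\alpha_i\mathcal{K}^i g\bigr)(z)=\bigl(\sum_{i=0}^N\alpha_i z^i\bigr)\phi(g)(z)$, and the isometry property of $\phi$ yields $\bigl\|\sum_{i=0}^N\alpha_i\mathcal{K}^i g\bigr\|_{\mathcal{H}_f}^2=\int_{S^1}\bigl|\sum_{i=0}^N\alpha_i z^i\bigr|^2\,|\phi(g)(z)|^2\,d\nu_f(z)$, which is precisely the formula the authors recorded in the paragraph preceding the proposition.

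Second, I would use the variational identity already noted just before Proposition \ref{prop:f_s case}, namely $\|C_N(g)\|=\max_{\alpha\neq 0}\bigl\|\sum_{i=0}^N\alpha_i\mathcal{K}^i g\bigr\|^2/\|\alpha\|^2$, which follows from the elementary computation $\alpha^\top C_N(g)\bar\alpha=\bigl\|\sum_i\alpha_i\mathcal{K}^i g\bigr\|^2$. Dividing by $N$, substituting the spectral expression from the previous step, and restricting to $\|\alpha\|=1$ gives the asserted equality at every fixed $N$; taking $N\to\infty$ on both sides then produces the stated limit relation.

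There is essentially no obstacle here: the proposition is a direct restatement of $\|C_N(g)\|/N$ in the spectral picture. The only point requiring attention is verifying that the spectral theorem is actually applicable, which needs both the restriction to the unitary invariant subspace $\mathcal{H}_{f,U}$ and its $f_U$-cyclicity; both facts are supplied earlier in the excerpt, so the proof is little more than bookkeeping.
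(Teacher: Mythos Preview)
Your proposal is correct and matches the paper's own argument essentially verbatim: the paper also derives the pointwise-in-$N$ identity by applying the spectral isomorphism $\phi$ from Theorem~\ref{thm:unitary spectral} to compute $\|\sum_i\alpha_i\mathcal{K}^ig\|^2=\int_{S^1}|\sum_i\alpha_iz^i|^2|\phi(g)|^2\,d\nu_f$, and then combines this with the variational formula $\|C_N(g)\|=\max_{\|\alpha\|=1}\|\sum_i\alpha_i\mathcal{K}^ig\|^2$ established just before Proposition~\ref{prop:f_s case}. The paper presents this as a one-line computation; your more careful justification of the applicability of the spectral theorem (unitarity via Wold, $f_U$-cyclicity) simply makes explicit what the paper leaves implicit.
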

To prove Eq.\eqref{eq:alpha X} for $g=f_d, f_{sc}$ and $f_{ac}$, we start with the following lemma.

\begin{lemma}\label{lem:nu_d2}
Let $f,\mathcal{H}_{f,U}, \nu_f, \phi$ be the same as in Theorem \ref{thm:unitary spectral}. For simplicity, we denote $\nu_f$ by $\nu$. $\nu = \nu_d+\nu_{sc} + \nu_{ac}$. Let $\nu_{d,1}$ be a purely discrete finite measure on $S^1$, such that $\{\xi_1,...,\xi_L\}=\text{Supp}(\nu_{d,1})\subset\text{Supp}(\nu_d)$ and $\nu_{d,1}(\{\xi_i\}) = \nu_{d}(\{\xi_i\})$ for any $0\leq i\leq L$. Let $c_k = \sqrt{\nu_d(\{\xi_k\})}$. Let $f_k = \phi^{-1}(\mathds{1}_{\{\xi_k\}})$, and set $h = \displaystyle\sum_{k=1}^Lf_k$. Let $d_{N,1}(h)$ be the leading \YZ{eigenvalue} of \YZ{$C_{N}(h)$}.
Then 
\begin{align}
    \YZ{\lim_{N\rightarrow\infty}\frac{d_{N,1}(h)}{N}} = \max_{k}c_k^2. \label{eq:nu_d normal}
\end{align}
\end{lemma}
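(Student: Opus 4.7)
My plan is to reduce Lemma \ref{lem:nu_d2} directly to Lemma \ref{lem:nu_d1} using the spectral representation of $\mathcal{K}$ on $\mathcal{H}_{f,U}$ provided by Theorem \ref{thm:unitary spectral} and packaged in Proposition \ref{prop:ANM-S1}. Since $h \in \mathcal{H}_{f,U}$ (by construction as a sum of eigenvectors coming from atoms of $\nu_d$), Proposition \ref{prop:ANM-S1} gives
\[
\lim_{N\to\infty}\frac{d_{N,1}(h)}{N} = \lim_{N\to\infty}\max_{\|\alpha\|=1}\frac{1}{N}\int_{S^1}\left|\sum_{i=0}^N \alpha_i z^i\right|^2 |\phi(h)(z)|^2\, d\nu(z).
\]

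The next step is to observe that the integrand is supported on a finite set. By construction, $\phi(h)=\sum_{k=1}^L \mathds{1}_{\{\xi_k\}}$, so $|\phi(h)(z)|^2 = \sum_{k=1}^L \mathds{1}_{\{\xi_k\}}(z)$ (the atoms are distinct). Because $\nu_{sc}$ and $\nu_{ac}$ are continuous measures on $S^1$, they assign zero mass to the finite set $\{\xi_1,\ldots,\xi_L\}$, so only $\nu_d$ contributes, and only through its atoms at these points, with weights $\nu_d(\{\xi_k\}) = c_k^2$. The integral therefore collapses to
\[
\sum_{k=1}^L c_k^2 \left|\sum_{i=0}^N \alpha_i \xi_k^i\right|^2.
\]

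Finally I would invoke Lemma \ref{lem:nu_d1} (after reindexing the $c_k$ in decreasing order, which is harmless since the sum is symmetric in $k$) to conclude
\[
\lim_{N\to\infty}\max_{\|\alpha\|=1}\frac{1}{N}\sum_{k=1}^L c_k^2\left|\sum_{i=0}^N \alpha_i \xi_k^i\right|^2 = \max_{k} c_k^2,
\]
which is exactly \eqref{eq:nu_d normal}. There is no real obstacle here beyond bookkeeping: the lemma is essentially Lemma \ref{lem:nu_d1} reinterpreted through the spectral isomorphism, and the only point to verify carefully is that the continuous components $\nu_{sc},\nu_{ac}$ contribute nothing because $|\phi(h)|^2$ is supported on finitely many points.
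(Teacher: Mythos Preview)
Your proposal is correct and follows essentially the same route as the paper: apply Proposition \ref{prop:ANM-S1} to $h$, use that $\phi(h)=\sum_{k=1}^L\mathds{1}_{\{\xi_k\}}$ so the integral against $\nu_f$ reduces to the finite sum $\sum_{k=1}^L c_k^2|\sum_i\alpha_i\xi_k^i|^2$, and then invoke Lemma \ref{lem:nu_d1}. Your explicit remark that $\nu_{sc}$ and $\nu_{ac}$ assign zero mass to the finite support of $|\phi(h)|^2$ is exactly the justification the paper leaves implicit when it passes from $d\nu_f$ to $d\nu_{d,1}$.
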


\begin{proof}[Proof of proposition \ref{lem:nu_d2}]
According to proposition \ref{prop:ANM-S1},
\begin{align}
    &\YZ{\lim_{N\rightarrow\infty}\frac{d_{N,1}(h)}{N} = \lim_{N\rightarrow\infty}\frac{\|C_{N}(h)\|}{N}}\nonumber\\
    =&\lim_{N\rightarrow\infty}\max_{\|\alpha\|=1}\frac{1}{N}\int_{S^1}|\sum_{i=0}^N\alpha_iz^i|^2|\phi(h)(z)|^2d\nu_f(z)\nonumber \\
    =&\lim_{N\rightarrow\infty}\max_{\|\alpha\|=1}\frac{1}{N}\int_{S^1}|\sum_{i=0}^N\alpha_iz^i|^2d\nu_{d,1}(z)\nonumber \\
    =&\lim_{N\rightarrow\infty}\max_{\|\alpha\|=1}\frac{1}{N}\sum_{k=1}^Lc_k^2|\sum_{i=0}^N\alpha_i\xi_{k}^i|^2
\end{align}

Then lemma \ref{lem:nu_d1} implies what we want to prove.
\end{proof}

\begin{prop}[The case for $f_d$ ]\label{prop:f_d}
Eq.\eqref{eq:ANM/NM d} holds.
\end{prop}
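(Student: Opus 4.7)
The plan is to reduce to the finite-atom situation handled by Lemma~\ref{lem:nu_d2} and then pass to the limit in the number of atoms. By Proposition~\ref{prop:ANM-S1}, since $f_d\in\mathcal{H}_{f,U}$, what I need to show is
\begin{align*}
    \lim_{N\to\infty}\max_{\|\alpha\|=1}\frac{1}{N}\int_{S^1}\Bigl|\sum_{i=0}^N\alpha_iz^i\Bigr|^2|\phi(f_d)(z)|^2\,d\nu_f(z) = |a_1|^2.
\end{align*}
Since each $v_k$ is a unit eigenvector with eigenvalue $\xi_k$, we have $\phi(v_k) = \mathds{1}_{\{\xi_k\}}/|a_k|$, so the measure $|\phi(f_d)|^2\,d\nu_f$ coincides with $\nu_d = \sum_k |a_k|^2 \delta_{\xi_k}$, and the integral above collapses to $\frac{1}{N}\sum_k |a_k|^2 |P_\alpha(\xi_k)|^2$ with $P_\alpha(z)=\sum_i\alpha_i z^i$.

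For each truncation level $L\in\mathbb{N}$ I set $f_d^{(L)} = \sum_{k=1}^L a_k v_k$ and $r_L = f_d - f_d^{(L)}$, with $\|r_L\|^2 = \sum_{k>L}|a_k|^2 \to 0$. Since the phases $a_k/|a_k|$ drop out of all autocovariances, $C_N(f_d^{(L)})$ coincides with $C_N(h)$ for the $h=\sum_{k=1}^L\phi^{-1}(\mathds{1}_{\{\xi_k\}})$ of Lemma~\ref{lem:nu_d2}; that lemma, combined with the ordering $|a_1|\geq|a_2|\geq\cdots$, immediately yields $\lim_{N\to\infty}\|C_N(f_d^{(L)})\|/N = \max_{k\leq L}|a_k|^2 = |a_1|^2$ for every $L\geq 1$.

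To upgrade from $f_d^{(L)}$ to $f_d$, I would exploit that $g\mapsto \|C_N(g)\|^{1/2} = \max_{\|\alpha\|=1}\|\sum_i \alpha_i \mathcal{K}^i g\|$ is a seminorm on $\mathcal{H}_f$, which gives the reverse triangle inequality
\begin{align*}
    \bigl|\|C_N(f_d)\|^{1/2} - \|C_N(f_d^{(L)})\|^{1/2}\bigr| \leq \|C_N(r_L)\|^{1/2}.
\end{align*}
A crude Cauchy–Schwarz estimate together with $\mathcal{K}$ being an isometry on $\mathcal{H}_{f,U}$ yields $\|\sum_i\alpha_i\mathcal{K}^i r_L\|\leq \sqrt{N+1}\,\|\alpha\|\,\|r_L\|$, so $\|C_N(r_L)\|\leq (N+1)\|r_L\|^2$ and consequently $\|C_N(r_L)\|/N \to \|r_L\|^2 = \sum_{k>L}|a_k|^2$ as $N\to\infty$. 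Taking $N\to\infty$ first and then $L\to\infty$ sandwiches $\lim_{N\to\infty}\|C_N(f_d)\|/N$ between $|a_1|^2\pm \varepsilon$ for arbitrary $\varepsilon>0$, which is the claim.

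The main obstacle is conceptual rather than technical: the tail bound $\|C_N(r_L)\|\leq (N+1)\|r_L\|^2$ is \emph{not} uniform in $N$, so the limit in $N$ must be taken first, for fixed $L$, before sending $L\to\infty$. This is consistent with the iterated-limit structure of Theorem~\ref{thm: main result gram}, but it precludes a single-limit argument and means any attempt to swap the order of limits must fail for an $f_d$ whose spectrum has infinitely many atoms. Once the iterated limit is accepted, the whole proof collapses to Lemma~\ref{lem:nu_d2} plus the Minkowski-type estimate above.
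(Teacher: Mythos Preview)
Your proof is correct and follows essentially the same route as the paper: truncate $\nu_d$ to finitely many atoms, invoke Lemma~\ref{lem:nu_d2} on the finite part, and control the tail by the crude Cauchy--Schwarz bound $\|C_N(r_L)\|/N\leq \tfrac{N+1}{N}\|r_L\|^2$ before sending the truncation level to infinity. The only cosmetic difference is that the paper uses the additive splitting $C_N(f_d)=C_N(h)+C_N(f_\epsilon)$ of positive semidefinite matrices (valid because the two pieces live in orthogonal $\mathcal{K}$-invariant subspaces) in place of your Minkowski inequality for the seminorm $g\mapsto\|C_N(g)\|^{1/2}$; both yield the same sandwich.
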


\begin{proof}[Proof of proposition \ref{prop:f_d}]

For any $\epsilon>0$, we choose a truncation $\nu_d = \nu_{d,1} + \nu_{d,\epsilon}$, so that $\nu_{d,\epsilon}(S^1) < \epsilon$, $|\text{Supp}(\nu_{d,1})| < \infty$, and that $\nu_{d,1}(\{\xi_k\}) = \nu_d(\{\xi_k\})$ whenever $\xi_{k}\in\text{Supp}(\nu_{d,1})$. \YZ{This decomposition of measure also induces an orthogonal decomposition of $L^2(S^1,d\nu_d) = L^2(S^1,d\nu_{d,1}) \bigoplus L^2(S^1,d\nu_{d,\epsilon})$. Note that these two components are invariant under $\mathcal{K}$.} When $\epsilon$ is small enough, $|a_1|^2 = \max_{\xi}\nu_{d,1}(\xi)$.

Apply lemma \ref{lem:nu_d2} to $\nu_{d,1}$, and let $h$ be defined as in lemma \ref{lem:nu_d2}. Then $f = h+f_{\epsilon}$ and
\YZ{
\begin{align}
    C_{N}(f) = C_{N}(h) + C_{N}(f_\epsilon),
\end{align}
}
and
\begin{align}
    \YZ{\lim_{N\rightarrow\infty}\frac{\|C_{N}(h)\|}{N}} = c_1^2 = \max_{\xi}\nu_d(\{\xi\}) = |a_1|^2\label{eq:f_d 1}
\end{align}

And note that, applying Cauchy-Schwartz inequality, 

\begin{align}
    &\YZ{\lim_{N\rightarrow\infty}\frac{\|C_{N}(f_\epsilon)\|}{N}} \nonumber \\
    =& \lim_{N\rightarrow\infty}\max_{|\alpha|^2=1}\int_{S^1}\frac{|\sum_{i=0}^N\alpha_iz^i|^2}{N}d\nu_{d,\epsilon} \leq \nu_{d,\epsilon}(S^1) < \epsilon\label{eq:f_d 2}
\end{align}

Eq.\eqref{eq:f_d 1}\eqref{eq:f_d 2} implies Eq.\eqref{eq:ANM/NM d} by letting $\epsilon\rightarrow 0$.
\end{proof}

\begin{prop}\label{prop:f_c}
Let $\nu_c$ be a continuous finite measure on $S^1$. Then 
\begin{align}
    \lim_{N\rightarrow\infty}\max_{\|\alpha\|^2=1}\int_{S^1}\frac{1}{N}|\sum_{i=0}^N\alpha_iz^i|^2d\nu_c = 0.
\end{align}
\end{prop}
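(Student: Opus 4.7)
The plan is to reduce the claim to the weak mixing Lemma \ref{lem:weakly mixing} applied to $f=g=1$, by expressing the quantity as the operator norm of a positive semidefinite Toeplitz matrix and bounding that norm by a Hilbert--Schmidt estimate.

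First I would unfold the integral and switch the order of summation and integration. Writing $\hat{\nu}_c(k) := \int_{S^1} z^k d\nu_c(z)$ for the Fourier coefficients of $\nu_c$, a direct expansion gives
\begin{align}
\int_{S^1}\Bigl|\sum_{i=0}^N \alpha_i z^i\Bigr|^2 d\nu_c(z) \;=\; \sum_{i,j=0}^N \alpha_i\bar{\alpha}_j\, \hat{\nu}_c(i-j) \;=\; \alpha^\top M_N \bar{\alpha},
\end{align}
where $M_N \in \mathbb{C}^{(N+1)\times(N+1)}$ is the Toeplitz matrix with entries $(M_N)_{ij} = \hat{\nu}_c(i-j)$. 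Since $M_N$ is the Gram matrix of $\{1,z,\ldots,z^N\}$ in $L^2(S^1,d\nu_c)$, it is positive semidefinite, and the maximum over $\|\alpha\|=1$ equals its operator norm $\|M_N\|$, i.e.\ its largest eigenvalue.

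Next I would use the standard Hilbert--Schmidt bound $\|M_N\|^2 \leq \operatorname{tr}(M_N^2)$, valid for PSD matrices, and compute
\begin{align}
\operatorname{tr}(M_N^2) \;=\; \sum_{i,j=0}^N |\hat{\nu}_c(i-j)|^2 \;=\; \sum_{k=-N}^{N}(N+1-|k|)\,|\hat{\nu}_c(k)|^2 \;\leq\; (N+1)\sum_{k=-N}^{N}|\hat{\nu}_c(k)|^2.
\end{align}
Using $|\hat{\nu}_c(-k)|=|\hat{\nu}_c(k)|$, the right-hand side is at most $2(N+1)\sum_{k=0}^{N}|\hat{\nu}_c(k)|^2$.

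Finally I would invoke Lemma \ref{lem:weakly mixing} with $f=g\equiv 1 \in L^2(S^1,d\nu_c)$, which gives $\frac{1}{N}\sum_{k=0}^N|\hat{\nu}_c(k)|^2 \to 0$ as $N\to\infty$ (this is where the continuity of $\nu_c$ is used crucially). Hence $\operatorname{tr}(M_N^2) = o(N^2)$, so $\|M_N\| = o(N)$, and dividing by $N$ yields the claim. There is no real obstacle here beyond organizing the computation; the only delicate ingredient is the weak mixing lemma itself, which has already been established earlier in the paper and is precisely tailored to eliminate the continuous spectrum contribution.
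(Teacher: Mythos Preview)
Your proof is correct and follows essentially the same route as the paper: both expand the quadratic form into the Toeplitz Gram matrix of $\{1,z,\ldots,z^N\}$, reduce the bound to $\frac{1}{N}\sum_{k=0}^N|\hat{\nu}_c(k)|^2\to 0$, and invoke Lemma~\ref{lem:weakly mixing} with $f=g=1$. The only cosmetic difference is that you use the Hilbert--Schmidt inequality $\|M_N\|_{\mathrm{op}}^2\le\operatorname{tr}(M_N^2)$, while the paper expands the double sum directly and applies AM--GM plus Cauchy--Schwarz to reach the same final estimate.
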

\begin{proof}[Proof of proposition \ref{prop:f_c}]
Let $c_k = |\nu_c(z^k)|$.  In lemma \ref{lem:weakly mixing}, let $f=g=1$, it implies that
\begin{align}
    \lim_{N\to\infty}\frac{1}{N}\sum_{k=0}^N c_k^2 = 0.
\end{align}
Therefore for $\|\alpha\| = 1$,
\begin{align}
    &\int_{S^1}\frac{1}{N}|\sum_{i=0}^N\alpha_iz^i|^2d\nu_c 
    \leq\frac{2}{N}\sum_{k=0}^N\sum_{i=0}^{N-k}|\alpha_{i+k}\bar{\alpha}_{i}|c_k \nonumber \\
    \leq& \frac{1}{N}\sum_{k=0}^N\sum_{i=0}^{N-k}(|\alpha_{i+k}|^2 + |\alpha_{i}|^2)c_k\leq \frac{1}{N}\sum_{k=0}^N2c_k\nonumber \\
    \leq&\frac{2}{N}\sqrt{(N+1)\sum_{k=0}^Nc_k^2} = 2\sqrt{\frac{N+1}{N}\frac{1}{N}\sum_{k=0}^Nc_k^2}\to 0, 
\end{align}
as $ N\to\infty.$
\end{proof}



\begin{corollary}[The case for $f_{sc}$ and $f_{ac}$]\label{cor:f_c}
Eq.\eqref{eq:ANM/NM s sc ac} holds for $f_{sc}$ and $f_{ac}$.
\end{corollary}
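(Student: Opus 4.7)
The plan is to reduce both cases to Proposition \ref{prop:f_c} via Proposition \ref{prop:ANM-S1}. First I would apply Proposition \ref{prop:ANM-S1} with $g=f_{sc}$ and $g=f_{ac}$ to rewrite
\begin{align}
\lim_{N\rightarrow\infty}\frac{\|C_{N}(f_{sc})\|}{N} &= \lim_{N\rightarrow\infty}\max_{\|\alpha\|=1}\frac{1}{N}\int_{S^1}\Bigl|\sum_{i=0}^N\alpha_iz^i\Bigr|^2|\phi(f_{sc})(z)|^2\,d\nu_f(z),\nonumber\\
\lim_{N\rightarrow\infty}\frac{\|C_{N}(f_{ac})\|}{N} &= \lim_{N\rightarrow\infty}\max_{\|\alpha\|=1}\frac{1}{N}\int_{S^1}\Bigl|\sum_{i=0}^N\alpha_iz^i\Bigr|^2|\phi(f_{ac})(z)|^2\,d\nu_f(z).\nonumber
\end{align}

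Next I would show the two weighted measures $d\mu_{sc}(z):=|\phi(f_{sc})(z)|^2\,d\nu_f(z)$ and $d\mu_{ac}(z):=|\phi(f_{ac})(z)|^2\,d\nu_f(z)$ are finite and continuous on $S^1$. Finiteness is immediate, since $\mu_{sc}(S^1)=\|\phi(f_{sc})\|_{L^2(S^1,d\nu_f)}^2=\|f_{sc}\|^2<\infty$ and analogously $\mu_{ac}(S^1)=\|f_{ac}\|^2<\infty$ by the isometry $\phi$. For continuity (no atoms), recall from the preceding discussion that $\phi(f_{sc})$ is supported on the carrier of $\nu_{sc}$ and $\phi(f_{ac})$ on the carrier of $\nu_{ac}$; hence $\mu_{sc}\ll\nu_{sc}$ and $\mu_{ac}\ll\nu_{ac}$. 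Since $\nu_{sc}$ and $\nu_{ac}$ carry no point masses by definition of the Lebesgue decomposition, neither do $\mu_{sc}$ nor $\mu_{ac}$, so both are continuous finite measures on $S^1$.

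Having identified $\mu_{sc}$ and $\mu_{ac}$ as continuous finite measures, I would invoke Proposition \ref{prop:f_c} (with $\nu_c=\mu_{sc}$ and $\nu_c=\mu_{ac}$ respectively) to conclude
\begin{align}
\lim_{N\rightarrow\infty}\max_{\|\alpha\|=1}\frac{1}{N}\int_{S^1}\Bigl|\sum_{i=0}^N\alpha_iz^i\Bigr|^2 d\mu_{sc}(z) &= 0,\nonumber\\
\lim_{N\rightarrow\infty}\max_{\|\alpha\|=1}\frac{1}{N}\int_{S^1}\Bigl|\sum_{i=0}^N\alpha_iz^i\Bigr|^2 d\mu_{ac}(z) &= 0,\nonumber
\end{align}
which is exactly Eq.\eqref{eq:ANM/NM s sc ac} restricted to $f_{sc}$ and $f_{ac}$.

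There is no substantial obstacle here: all the nontrivial analytic work (the weak-mixing estimate underlying Proposition \ref{prop:f_c} and the spectral representation yielding Proposition \ref{prop:ANM-S1}) has already been done earlier in the excerpt. The only point that requires a sentence of justification is that multiplying by the nonnegative $L^1$ weight $|\phi(f_{sc})|^2$ (resp.\ $|\phi(f_{ac})|^2$) preserves the continuity of $\nu_{sc}$ (resp.\ $\nu_{ac}$), so Proposition \ref{prop:f_c} applies as stated.
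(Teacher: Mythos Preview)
Your proposal is correct and follows exactly the paper's approach, which simply states that the corollary is a direct consequence of Proposition~\ref{prop:ANM-S1} and Proposition~\ref{prop:f_c}. Your additional justification that the weighted measures $|\phi(f_{sc})|^2\,d\nu_f$ and $|\phi(f_{ac})|^2\,d\nu_f$ are finite and continuous (being absolutely continuous with respect to $\nu_{sc}$ and $\nu_{ac}$, respectively) makes explicit what the paper leaves implicit.
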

\begin{proof}
This is the direct consequence of proposition \ref{prop:ANM-S1} and \ref{prop:f_c}.
\end{proof}

\section{Algorithm and Discussion}
A direct application of the main theorem is to determine whether or not the given finite data set is sufficient enough for \YZ{determining the $i-$th \YZ{$\mathcal{K}-$}eigenfreqeuncy} using \YZ{Gramian} matrix. For this purpose, we provide the following algorithm.
\begin{itemize}
    \item Given a time series of data $\{f(t)\}_{0\leq t\leq T}$ \YZ{where $t,T$ are non-negative integers that represent the iteration number}, choose $N_k, M_{k,j}$ where $1\leq j \leq l_k$, such that $N_k+M_{k,j}\leq T$, $M_{k,1}< M_{k,2}< ...< M_{k,l_k}\gg N_k$.
    \item For each $N_k, M_{k,j}$, compute the renormalized \YZ{eigenvalues} of $G_{N_kM_{k,j}}$, denoted by \YZ{$\sigma_{k,j,i} = \frac{d_{N_kM_{k,j},i}}{N_k}$}.
    \item Given $i$, for each $N_k$ check if $\sigma_{k,j,i}$ converges as $j$ increases. If for some $k$ it does not converge, it means that the $i-$th  \YZ{$\mathcal{K}$}\YZ{eigenfrequency} is not well represented by this dataset.
    \item Given $i$, if for all $k$, $\sigma_{k,j,i}$ shows good convergence, then check if $\sigma_{k,l_k,i}$ converges as $k$ increases. If $\sigma_{k,l_k,i}$ converges to some nonzero number, then the \YZ{energy of the} $i-$th  \YZ{$\mathcal{K}-$}\YZ{eigenfrequency} is well represented by this data set. Otherwise, the $i-$th  \YZ{$\mathcal{K}-$}\YZ{eigenfrequency} is not well-represented by this data set.  
\end{itemize}
\YZ{
\begin{remark}[Identification  of the  \YZ{$\mathcal{K}-$}eigenfrequencies]
Assume convergence for sufficiently enough $i$, choose $G_{NM}$ so that $d_{NM,i}/N$ is close enough to $|a_i|^2$ for sufficiently many $i$. Assume that $|a_{k-1}| > |a_k| = |a_{k+1}| = ... = |a_{k+L}|>|a_{k+L+1}|$. $L$ must be finite because $\|f\|^2 \geq \displaystyle\sum_{i}|a_i^2|$. Let $\xi_k,...,\xi_{k+L}$ be the corresponding theoretical $\mathcal{K}$eigenfrequencies. Our goal is to identify $\xi_{k},...,\xi_{k+L}$. Let $\eta_{k+i} = (1,\xi_{k+i},\xi_{k+i}^2,...,\xi_{k+i}^N)$. Let $\{v_{NM,k},v_{NM,k+1},...,v_{NM,k+L}\}$ be the corresponding eigenvectors of $G_{NM}$. Then each of $v_{NM,k},...,v_{NM,k+L}$ is approximately a linear combination of $\eta_{k},...,\eta_{k+L}$. Then $\xi_{k+i}$ can be identified by applying Fourier analysis. In following two cases, the eigenfrequency can be approximated by counting the local maximums of $v_{NM,i}$. 
\begin{itemize}
    \item Case 1: $f$ is a real valued observable. And for each $v_i$ there is no eigen-vector except the conjugate of $v_i$ that has the same energy as $v_i$;
    \item Case 2: for each $v_i$ there does not exist other eigen-vectors that has the same energy as $v_i$.
\end{itemize}
\end{remark}  
}

\subsection{Implication to Hankel DMD}
In \cite{Arbabi2017} a Hankel DMD algorithm has been proposed and the authors showed that $\YZ{d_{N,i}}$ can be used to identify Koopman and non-Koopman eigenfunctions for fixed $N$  under the conditions that 1), the Hilbert space $\mathcal{H}_f$ is finite dimensional and 2), $N$ is  larger than the dimension of $\mathcal{H}_f$. More precisely, they showed that $\YZ{d_{N,i}} > 0$ if and only if \YZ{$\YZ{d_{N,i}}$} corresponds to a Koopman eigenfunction. However, this assumption is already too strong even for the case where $f(x)$ is the observation of the first component of the 3 dimensional Lorenz system. In the case for which the dimension of $\mathcal{H}_f$ is infinite, their method unfortunately fails. Because $d_{N,i}$ can be positive even if there is no Koopman eigenfunctions. Therefore Theorem \ref{thm: main result gram} can be thought of as a completion of the method posed in \cite{Arbabi2017} under a much weaker assumption, by letting $N\to\infty$.
\subsection{Comparison with Yosida's formula}
Yosida mean ergodic theorem \cite{Yosida1995} provides a formula to calculate $a_\omega$, the coefficient of the Koopman eigenfunction of frequency $\omega$ in Eq.\eqref{eq:f decompose}:
\begin{align}
    a_{\omega} = \lim_{T\to\infty}\frac{1}{T}\sum_{t=0}^{T-1}\exp(-2\pi i\omega t)f(t). \label{eq:Yosida}
\end{align}
$a_{\omega} = 0$ if $\omega$ is not a Koopman eigenfrequency. Under the assumption of ergodicity and finite invariant measure, this formula can be proved by combining Theorem \ref{thm:Wold}, lemma \ref{lem:shift_product}, and Von-Neumann ergodic theorem. \YZ{In the more general situation where only the existence of autocovariances is assumed, we do not know whether the limit in Eq.\eqref{eq:Yosida} always exists. Nor do we know if the output of Yosida's formula is strictly related to Koopman theory. } This formula was first introduced to the fluid dynamics' community by \cite{Mezi2004ComparisonOS, Mezi2005SpectralPO}. Eq.\eqref{eq:Yosida} is easy to compute for \YZ{finite $T$ and} a given $\omega$. In the case for which the Koopman eigenfrequencies are unkown, numerically one still has the chance to identify some Koopman eigenfrequencies by calculating Eq.\eqref{eq:Yosida} for all $\omega\in\{k\Delta\omega: k = 1,...,n\}$ and then finding the peak value.

On the other hand, from the theoretical point of view, \YZ{when the system is ergodic and has a finite invariant measure,}  our result allows us to identify the Koopman eigenfunctions without having  prior knowledge about the Koopman eigenfrequencies. 


\section{Numerical experiments}

\subsection{Lorenz63 system}
To first test the theorem-based methodology, we consider the Lorenz63 system. We integrate Lorenz system using the Runge-Kutta 4th order scheme with $\Delta t = 0.01$. As already mentioned in \cite{Das2017}, \YZ{this system is ergodic and has finite invariant measure. Hence the autocovariance always exist \YZ{and $\mathcal{H}_f$ can be identified with a subspace of $L^2(X,d\mu)$ and $\mathcal{K}$ coincides with the classical Koopman operator on $\mathcal{H}_f$.}} Due to its weakly mixing nature, the only Koopman eigenfunction of Lorenz 63 is the constant function which has frequency 0. Let $f(t) = x - \bar{x}$, where $x$ is the first component of Lorenz system and $\bar{x}$ is the temporal mean of $x$. We use $E_{H,1}(N,M)$ to denote the leading renormalized singular value $\frac{d_{NM,1}^2}{NM}$. Then the decomposition $f = \displaystyle\sum_{i}a_{i}\YZ{v}_i + f_{NU} + f_{sc} + f_{ac}$ can be reduced to $f= f_{NU} + f_{sc} + f_{ac}$.  As expected, Fig.\ref{fig:L63} does not display the tendency that $E_{H}(N,\YZ{M_{\text{max}}})$ converge to some nonzero value as $N\to\infty$.
\begin{figure}
    \centering
    \includegraphics[width = 0.5\textwidth]{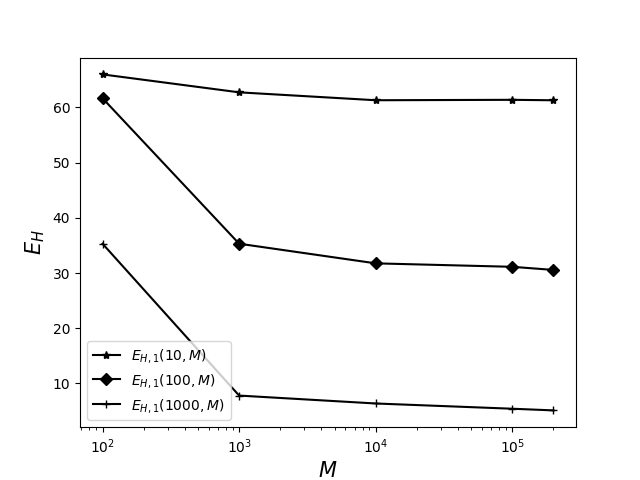}
    \caption{The results about Lorenz 63 system. The behavior of $E_{H,1}(N,M)$ agrees with the theoretical fact that $E_{H}(N,\YZ{M_\text{max}})$ does not converge to any nonzero value as $N\to\infty$. }
    \label{fig:L63}
\end{figure}

\subsection{A simple 4-dimensional system}
Following one of the numerical examples in \cite{Das2017}, we next consider a coupled system $(X,T, \mu)$, which consists of the discrete-time Lorenz system $(X_{63}, T_{63}, \mu_{63})$ and a rotation on the unit circle $(S^1,T_1, \mu_1)$, i.e. $X = X_{63}\times S^1$, $T = T_{63}\times T_1$ and $\mu = \mu_{63}\times\mu_1$. It is outlined in \cite{Das2017} that  $\mu$ is an invariant measure. Still, the Lorenz system does not have non-trivial Koopman \YZ{eigenfrequency} and $(X,T,\mu)$ is ergodic. \YZ{Therefore the autocovariance exists \YZ{and $\mathcal{K}$ coincides with the classical Koopman operator.}.} 

We choose the rotation $T_1$ to have period $p = \pi/5$ and define the observable
\begin{align}
    f(x,y,z,\xi)=\sin(\xi+x/10). 
\end{align}
For simplicity, we also use $f(t)$ to denote $f(x(t),y(t),z(t),\xi(t))$.
Then $f=\displaystyle\sum_{i}a_iv_i + f_{sc} + f_{ac} + f_{NU}$ as in Eq.\eqref{eq:f decompose}.


Anticipated by our main theorem, the renormalized singular value of the \YZ{trajectory} matrix should then converge to the same quantity as the one calculated by Eq.\eqref{eq:Yosida}. Hence it is worth to make a numerical comparison about $|a_{\omega}|^2$ obtained from Yosida's formula and that from the singular values of $A_{NM}$. 

The integration time step for Lorenz system is $\Delta t = 0.01$. The Runge-Kutta 4th order scheme is applied for integrating Lorenz system. The frequency $\omega$ we investigate is exactly the inverse of the period of $(S^1,T_1)$, i.e. $\omega = 5\Delta t/\pi$ for Eq.\eqref{eq:Yosida}. We use $E_{Y}(T)$ to denote the value of $|a_{\omega}|^2$ computed by Eq.\eqref{eq:Yosida}, and $E_{H,i}(N,M)$ to denote the value of $\frac{d_{NM,i}^2}{NM}$ which is computed from the singular value decomposition of $A_{NM}$. Note that $t,T,N,M$ are all integers which refer to the number of time steps instead of the exact time. 

\begin{figure}
    \centering
    \includegraphics[width = 0.5\textwidth]{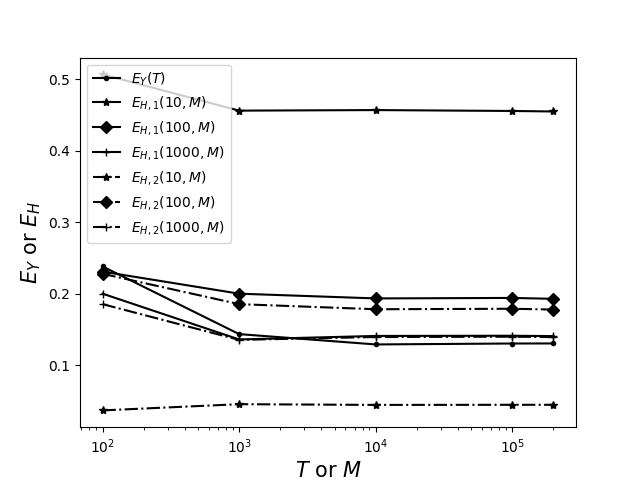}
    \caption{The numerical results of $E_Y(T)$ and $E_{H,i}(N,M)$ for $(X_{63}\times S^1, T_{63}\times T_1, \mu_{63}\times \mu_1)$.}
    \label{fig:L63+S1}
\end{figure}

Fig.\ref{fig:L63+S1} shows the numerical results of $E_{Y}(T)$ and $E_{H}(N,M)$. The value $E_Y(T)$ does not converge to $0$, showing that $\omega$ is indeed a Koopman eigen frequency. We also computed $\|f\|^2\approx\frac{1}{T}\displaystyle\sum_{t=1}^T|f(t)|^2 \approx 0.5002$. $E_{Y}(2\times 10^5)\approx 0.1303$, meaning that the fraction of energy in $f$ represented by the Koopman \YZ{eigenvector} $v_{\omega}$ is about $26\%$. Note that $E_H(10^3,2\times 10^5)$ is close to $E_{Y}(2\times 10^5)$, meaning that the leading singular value of the \YZ{trajectory} matrix indeed corresponds to the eigenfrequency $\omega$. $E_{H,1}(10^3,M)$ and $E_{H,2}(10^3,M)$ seem to converge to the same value. This is because the Koopman \YZ{eigenfrequencies} always exist in pair, i.e. $\exp(2\pi i\omega)\in\text{Supp}(\nu_d)\iff \exp(-2\pi i\omega)\in\text{Supp}(\nu_d)$. Since the observable $f$ is real, the coefficient $a_{\omega} = \bar{a}_{\bar{\omega}}$. Therefore, the total fraction of energy in $f$ that is represented by signals of period $p=\frac{\pi}{5}$ is about $52\%$.

\subsection{AVISO (DUACS) interpolated ocean topography data (1993-2019)}
For final illustration, we consider sea surface height (SSH) estimates. The AVISO gridded products provide the global SSH interpolation since 1993, the year after the launch of the first satellite altimeter TOPEX/Poseidon. The SSH is interpolated daily at a grid resolution of $0.25^\circ\times 0.25^{\circ}$. 
In this subsection, we use the main theorem to possibly assess the use of Koopman analysis for this dataset.

\YZ{The} assumption \YZ{of the main theorem} is the \YZ{existence of autocovariance}, which implies that the system should be stationary. We thus process the data by removing the overall constant rising tendency of SSH at each grid point over the decades (see for instance Fig.2 in \cite{Cazenave2010}). \YZ{We can not assert that the whole Earth system is ergodic and has an invariant measure, which includes the Earth, the atmosphere, ocean, all celestial bodies, but also the biology and living animals, etc. Hence we can not claim that the quantities $a_i$, $v_i$, etc. are associated to the classical Koopman operator. But, as we already stated, all these quantities are well-defined mathematically as long as the autocovariances exist. Moreover, in order to apply Yosida's formula, we assume that the eigenvectors $v_i\in\mathcal{H}_f$ can be represented as a time series of the form $\{1, e^{i\omega}, e^{2i\omega},...\}$ for some $\omega\in [0,2\pi]$. In this case, the output of Yosida's formula $a_{\omega}$ must be $a_i$.}  
We renormalize the SSH at each grid point, to simply ensure the data to have zero mean and unit variance at every grid point. We first apply Yosida's formula (Eq.\ref{eq:Yosida}) to the global data to compute $|a_{\omega}|$ at every grid point, where $\omega = \exp(2\pi i /365.25)$. This quantity is computed for January 1, 1998, i.e. $f(1)$ refers to the SSH at Jan. 1, 1998. Note that in theory, i.e. \YZ{assuming the autocovariance exists} and the data set large enough, this quantity does not depend on time. Since the data now has unit variance, $|a_{\omega}|^2$ can be interpreted as the fraction of energy in the SSH that is represented by the  \YZ{$\mathcal{K}-$eigenfrequency} $\omega$. Similarly, since the SSH are real numbers, $|a_\omega| = |a_{\YZ{-\omega}}|$ and $|a_{\omega}|^2 + |a_{\YZ{-\omega}}|^2 = 2|a_{\omega}|^2$ represents the fraction of energy represented by the yearly signal. 

\begin{figure}
    \centering
    \includegraphics[width=0.5\textwidth]{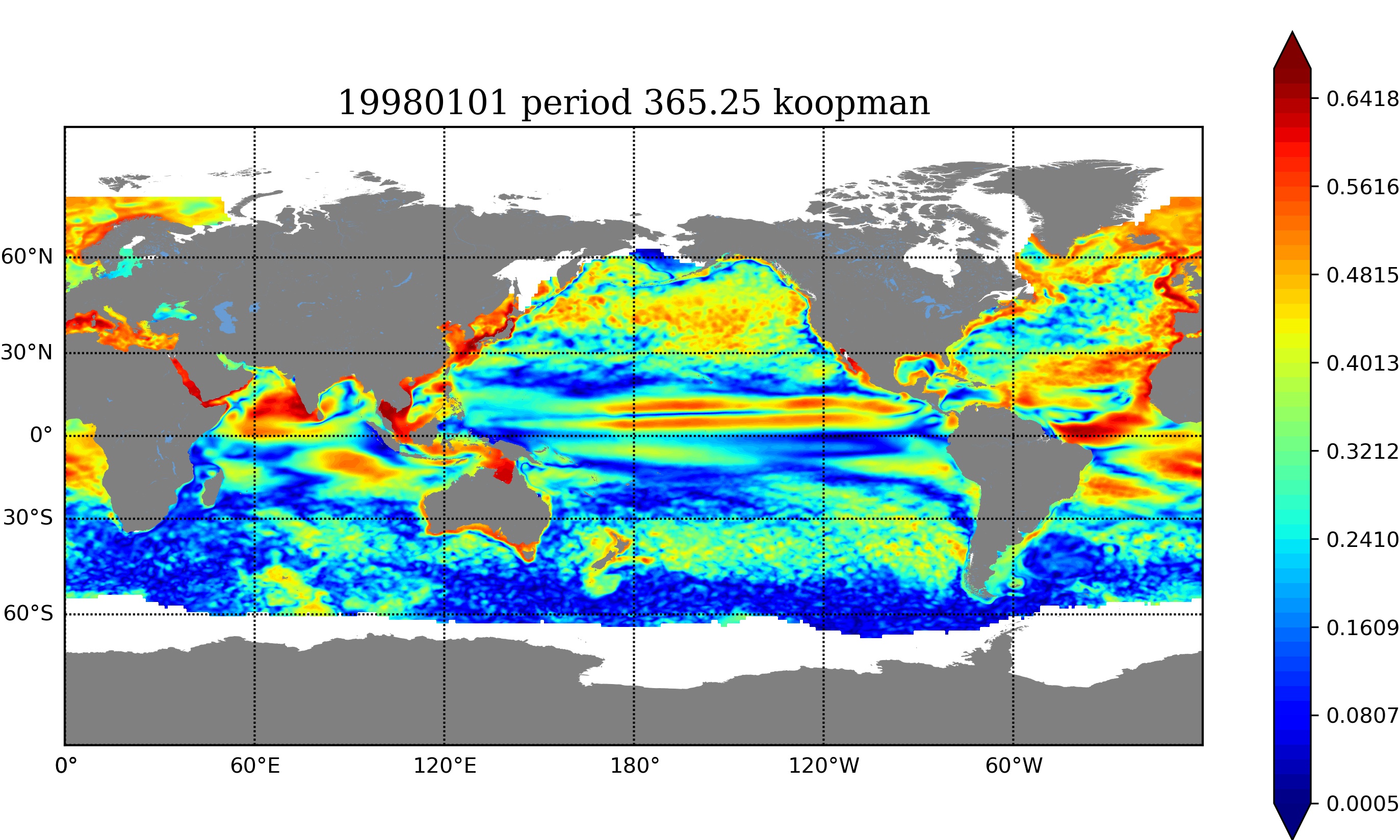}
    \caption{ The $|a_{\omega}|$ value at each grid point computed at Jan. 1, 1998 using Eq.\eqref{eq:Yosida} }
    \label{fig:global_yearly}
\end{figure}

Fig.\ref{fig:global_yearly} shows that more than $0.5^2+0.5^2 = 50\%$ of energy at the pacific ocean to the north of the equator (for instance at ($114.875^{\circ}$W, $6.125^{\circ}$N)) is represented by the yearly signal. Constructing the \YZ{trajectory} matrix for SSH at ($114.875^{\circ}$W, $6.125^{\circ}$N)), i.e. we choose $f = \text{SSH}(114.875^{\circ}$W,$6.125^{\circ}$N$)$, we can then compare $E_{Y}(T)$ and $E_{H,i}(N,M)$, for $i=0,1,2,3$, $N = p,3p,6p$, $M = 3p,6p,20p$, with $p = 1$ year $= 365.25$(days). Fig.\ref{fig:SSH 1,2} shows that the first two renormalized singular values apparently converge to the fraction of energy represented by \YZ{$\mathcal{K}-$eigenfrequencies} $\omega$ and $\YZ{-\omega}$. However, the third and fourth renormalized singular values do now show a sign of convergence. As shown in Fig.\ref{fig:SSH 3,4}, this is likely due to the overall limited length of the present-day data set regarding the high dimensional  \YZ{state space of the} dynamical \YZ{system} at stake.

\begin{figure}
    \centering
    \includegraphics[width = 0.5\textwidth]{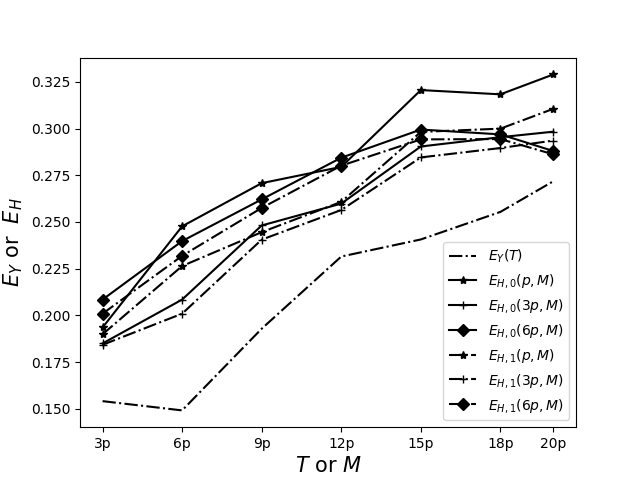}
    \caption{The numerical results of $E_Y(T)$ and $E_{H,i}(N,M)$ (i=0,1) for AVISO interpolated SSH at ($114.875^{\circ}$W, 6.125$^{\circ}$N). It shows that the first and the second singular value have possibly converged.}
    \label{fig:SSH 1,2}
\end{figure}

\begin{figure}
    \centering
    \includegraphics[width = 0.5\textwidth]{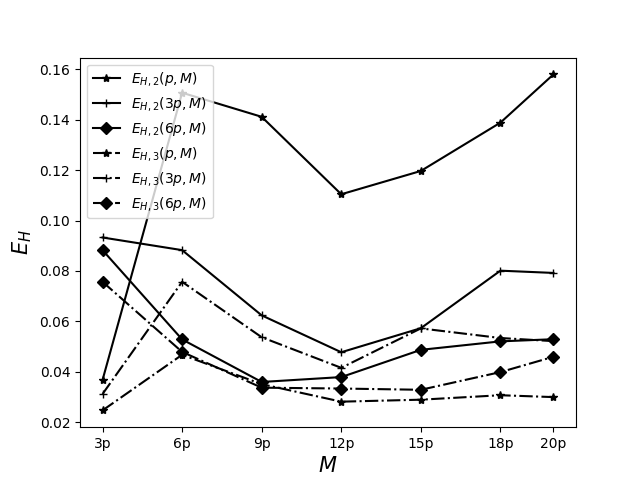}
    \caption{The numerical results of $E_Y(T)$ and $E_{H,i}(N,M)$ (i=2,3) for AVISO interpolated SSH at ($114.875^{\circ}$W, 6.125$^{\circ}$N). It shows that the third and the fourth singular value have not yet converged.}
    \label{fig:SSH 3,4}
\end{figure}

\section{Conclusion}
The main objective of this study is to provide a rigorous and practical method to identify Koopman \YZ{eigenfrequencies} for discrete-time ergodic and (finite) measure preserving  dynamical systems. \YZ{In the more general situation where we only assume that the time series has well-defined autocovariances, we define the Hilbert space $\mathcal{H}_f$ purely based on the time series and the time shift  operator $\mathcal{K}$ that acts on $\mathcal{H}_f$. When the system is ergodic and has finite invariant measure, $\mathcal{H}_f$ can be identified with the closure of the Krylov subspace generated by an observable $f$, and the time-shift operator $\mathcal{K}$ coincides with the classical Koopman operator on the observable space $\mathcal{H}_f$}. This work follows the result in \cite{Arbabi2017}, but further extend the applicability of the Hankel-DMD. It  provides a theorem-based practical way to help assess the results of the decomposition in terms of Koopman \YZ{eigenfrequencies}.  \YZ{It shows that} the leading temporal EoFs, which are calculated from the eigen decomposition of the Gramian matrix, are \YZ{indeed due to intrinsic eigenfrequencies}. The main theorem provides partial theoretical foundation to several existing empirical methods including SSA,DAHD, and HAVOK. 
The main result shows that the discrete spectrum \YZ{of $\mathcal{K}$} can be characterized by the singular values (eigenvalues) of \YZ{trajectory} (Gramian, respectively) matrix. It remains to study whether the continuous spectrum can also be characterized by these matrices.

The numerical illustrations demonstrate the applicability of the theorem-based methodology for low dimensional systems. Yet, using sea surface height observables to inform about a very large dimension dynamical planet system, it is also apparent that one major difficulty of applying the main theorem might be the length of the data-set. An heuristic solution is to possibly associate the observables at different grid points, and/or to consider multiple observables, i.e. sea surface temperature. We reserve these investigations for future studies.

\section*{Acknowledgement}
The authors acknowledge the support of the ERC EU project 856408-STUOD, the support of the ANR Melody project, the support from China Scholarship Council, and the support from the National Natural Science Foundation of China (Grant No. 42030406).

\appendix
\section{An alternative proof of the weakly mixing property}
In this appendix we provide an alternative proof of the weakly mixing property. Note that the proof of mixing theorem on page 39 of \cite{Halmos1956} implies that the weakly mixing property is equivalent to 
\begin{align}
    \lim_{N\to\infty}\frac{1}{N}\sum_{i=0}^N|\int_{S^1}z^id\mu(z)|^2 = 0,
\end{align}
for any continuous measure $\mu$, which apparently is equivalent to
\begin{align}
    \lim_{n\rightarrow\infty}\frac{\#\{0<k<n : |\int_{S^1}z^k d\mu(z)|>\epsilon\}}{n} = 0,\label{eq:moments}
\end{align}
for any $\epsilon > 0$ and continuous measure $\mu$. We shall provide an alternative proof for proposition \ref{prop:f_c} and then derive Eq.\eqref{eq:moments} from proposition \ref{prop:f_c}. To do this, we need the following lemma.

\begin{lemma}\label{lem:seq}
Let $\nu_c$ be a continuous finite measure on $S^1$, and $\{A^{(i)}\}_{i=1}^{\infty}$ a sequence of subsets of $S^1$ such that $\nu_c(A^{(i)}) > \epsilon$ for some fixed $\epsilon > 0$ and for any $i$. Then for any $L>0$, there exists $\xi_{1},...,\xi_{L} \in S^1$, and a subsequence $\{A^{(i_k)}\}_{k=1}^{\infty}$, such that $\xi_{j}\in A^{(i_k)}$ for any $j$ and $k$.
\end{lemma}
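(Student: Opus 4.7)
The plan is to iterate a one-point extraction argument $L$ times, with the key tool being the reverse Fatou lemma for sets. First I would observe that, since $\nu_c$ is a finite measure on $S^1$ and $\nu_c(A^{(i)}) > \epsilon$ for all $i$, reverse Fatou yields
\begin{align*}
\nu_c\Bigl(\limsup_{i\to\infty} A^{(i)}\Bigr) \;\geq\; \limsup_{i\to\infty} \nu_c(A^{(i)}) \;\geq\; \epsilon \;>\; 0.
\end{align*}
The set $\limsup_i A^{(i)} = \bigcap_{n}\bigcup_{i\geq n}A^{(i)}$ consists precisely of the points lying in infinitely many $A^{(i)}$, so it is non-empty. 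Pick any $\xi_1$ in it and let $\{A^{(i_k^{(1)})}\}_{k=1}^{\infty}$ be the subsequence consisting of all terms of the original sequence that contain $\xi_1$.

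Next I would set up the induction. Suppose we already have $\xi_1,\ldots,\xi_j$ and a subsequence $\{A^{(i_k^{(j)})}\}_{k=1}^{\infty}$, each term of which still has $\nu_c$-measure strictly greater than $\epsilon$ and contains $\xi_1,\ldots,\xi_j$. Applying reverse Fatou again to this subsequence gives $\nu_c\bigl(\limsup_k A^{(i_k^{(j)})}\bigr) \geq \epsilon > 0$. Because $\nu_c$ is continuous (atomless), any set of positive $\nu_c$-measure is uncountable, and in particular we may select
\begin{align*}
\xi_{j+1} \;\in\; \Bigl(\limsup_{k\to\infty} A^{(i_k^{(j)})}\Bigr) \setminus \{\xi_1,\ldots,\xi_j\}.
\end{align*}
Extracting from $\{A^{(i_k^{(j)})}\}_{k}$ the further subsequence of terms that contain $\xi_{j+1}$ produces $\{A^{(i_k^{(j+1)})}\}_{k}$, which by construction contains each of $\xi_1,\ldots,\xi_{j+1}$ in every term. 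After $L$ iterations the conclusion follows by setting $i_k := i_k^{(L)}$.

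I do not expect a serious obstacle here; the argument is a standard pigeonhole-style greedy extraction in measure theory. The role of each hypothesis is transparent: the uniform lower bound $\nu_c(A^{(i)})>\epsilon$ together with the finiteness of $\nu_c$ are what make reverse Fatou produce a set of positive $\nu_c$-measure at every stage, while the continuity (atomlessness) of $\nu_c$ is what guarantees that the greedy choice can be carried out $L$ times yielding \emph{distinct} points $\xi_j$. The only mildly delicate point is the bookkeeping of nested subsequences, but this is purely notational: because each stage only passes to a further subsequence, every $\xi_j$ chosen earlier automatically remains in every term of the final subsequence.
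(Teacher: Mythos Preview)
Your proof is correct and follows essentially the same strategy as the paper's: both extract a point lying in infinitely many $A^{(i)}$ via the fact that $\nu_c(\limsup_i A^{(i)})\geq\epsilon$ (the paper phrases this as a contradiction using continuity from above of the finite measure, you name it as reverse Fatou), then iterate along the resulting subsequence. The only cosmetic difference is in enforcing distinctness of the $\xi_j$: the paper deletes a small open neighborhood $I_j$ of each chosen $\xi_j$ with $\nu_c(I_j)$ small so that the truncated sets retain measure bounded below, whereas you simply observe that an atomless measure gives any positive-measure set uncountably many points and pick $\xi_{j+1}$ outside the finite set $\{\xi_1,\dots,\xi_j\}$; your route is marginally cleaner but the underlying argument is the same.
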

\begin{proof}[Proof of lemma \ref{lem:seq}]
    The idea of the proof is that we first show that there exists a point $\xi_1\in S^1$ and $\Lambda_1\subset\mathbb{N}$ such that $|\Lambda_1| = \infty$ and $\xi_1\in A^{(i)}$ for any $i\in\Lambda_1$. Then we choose a small neighborhood $I_1$ of $\xi_1$ so that $\nu_c(I_1)<\epsilon/2$. This can be done merely because $\nu_c$ is a continuous measure. Let $A^{(i),1} = A^{(i)}-I_1$ for $i\in\Lambda_1$, we have $\nu_c(A^{(i),1}) >\epsilon/2$. Then we apply the same analysis to $\{A^{(i),1}\}_{i\in\Lambda_1}$ to find $\xi_2$, etc. After doing the same analysis for $L$ times, we get $\xi_1,...,\xi_L$ and $\Lambda_L$ such that $|\Lambda_L|=\infty$ and $\xi_j\in A^{(i)}$ for any $j$ and $i\in\Lambda_L$.
    
    To prove that there exists a point $\xi_1\in S^1$ and $\Lambda_1\subset\mathbb{N}$ such that $|\Lambda_1| = \infty$ and $\xi_1\in A^{(i)}$ for any $i\in\Lambda_1$. We prove by contradiction. Suppose that this is not true, i.e. for any $\xi\in S^1$ there exists $N$ such that $\xi\notin A^{(i)}$ for any $i>N$. Let $B_N = \cup_{i\geq N}A^{(i)}$. Then $B_1\supset B_2\supset\dots$ and $\cap_{N\geq 1}B_N = \emptyset$. It means that $\nu_c(B_N)\to 0$ as $N\to\infty$. This is apparently not true because $\nu_c(B_N) \geq \nu_c(A^{(N+1)}) > \epsilon$.
\end{proof}
Now we give another proof of proposition \ref{prop:f_c} based on lemma \ref{lem:seq}.
\begin{proof}[An alternative proof of proposition \ref{prop:f_c}]
We prove by contradiction. We assume that proposition \ref{prop:f_c} does not hold. Then there exists $\epsilon > 0$ and a sequence of $\alpha^{(i)} = \{\alpha^{(i)}_{1},...,\alpha^{(i)}_{n_i}\}$, such that $\|\alpha^{(i)}\|^2 = 1$ and 
\begin{align}
    \nu_c(|P^{(i)}|^2/n_i) > \epsilon,
\end{align}
where $P^{(i)}(z) = \displaystyle\sum_{j=0}^{n_{i}}\alpha_{j}^{(i)}z^j$.
Note that $|P^{(i)}(z)|^2/n_i \leq 1$ for any $z\in S^1$. Let 
\begin{align}
    A^{(i)} = \{z\in S^1 \Big{|} |P^{(i)}(z)|^2 >= \frac{\epsilon}{2}n_i\}.
\end{align}
Then $\nu_s(A^{(i)}) \geq \frac{\epsilon}{2}$.

Now we pick any $L>4/\epsilon$. Lemma \ref{lem:seq} implies that we can find $\xi_1,...,\xi_L\in S^1$, $\xi_{i}\neq\xi_j$ for $i\neq j$, and a subsequence $A^{(i_j)}$ of $A^{(i)}$, such that $\xi_{k}\in A^{(i_j)}$ for any $k,j$. Then for any $i_j$, set $N = n_{i_j}$ and $\alpha = \alpha^{(i_j)}$,
\begin{align}
    \sum_{k=1}^L\frac{1}{N}|\sum_{i=0}^N\alpha_i\xi_{k}^i|^2 \geq \sum_{k=1}^L\frac{1}{N}\frac{\epsilon}{2}N \geq 2
\end{align}
This contradicts with lemma \ref{lem:nu_d1} by setting $c_1=c_2=...=c_L = 1$.
\end{proof}
Now we derive Eq.\eqref{eq:moments} from proposition \ref{prop:f_c}.
\begin{corollary}\label{cor:c moments}
Let $\nu_c$ be a continuous finite measure on $S^1$. Then for any $\epsilon > 0$,
\begin{align}
    \lim_{n\rightarrow\infty}\frac{\#\{0<k<n : |\int_{S^1}z^k d\nu_c(z)|>\epsilon\}}{n} = 0
\end{align}
\end{corollary}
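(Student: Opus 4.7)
The plan is a two-step reduction from Proposition \ref{prop:f_c}. Setting $c_k := |\nu_c(z^k)|$, Corollary \ref{cor:c moments} will follow from the Cesàro statement
\begin{align}
    \frac{1}{N}\sum_{k=0}^{N-1} c_k^2 \;\longrightarrow\; 0 \label{eq:pp_ces}
\end{align}
by the elementary Chebyshev bound $\#\{0<k<N:c_k>\epsilon\}\,\epsilon^2\leq \sum_{k=0}^{N-1}c_k^2$. So the real content is to extract \eqref{eq:pp_ces} from Proposition \ref{prop:f_c}.

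For this I would employ a self-bootstrapping trick: introduce the measure $\tilde\nu$ on $S^1$ given by the pushforward of the product measure $\nu_c\times\nu_c$ under the map $(z,w)\mapsto z\bar w$. Two observations make $\tilde\nu$ the right object to feed back into Proposition \ref{prop:f_c}. First, continuity passes to the pushforward: $\tilde\nu(\{z_0\})=\int_{S^1}\nu_c(\{z_0 w\})\,d\nu_c(w)=0$ for every $z_0\in S^1$, so $\tilde\nu$ is itself a continuous finite measure. Second, the Fourier coefficients of $\tilde\nu$ are precisely the squares we want,
\begin{align}
    \tilde\nu(z^k) \;=\; \nu_c(z^k)\,\overline{\nu_c(z^k)} \;=\; c_k^2 \;\geq 0,
\end{align}
and in particular $\tilde\nu(z^{-k})=\tilde\nu(z^k)=c_k^2$.

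Now I would apply Proposition \ref{prop:f_c} to the continuous measure $\tilde\nu$ with the uniform choice $\alpha_i=(N+1)^{-1/2}$ for $i=0,\ldots,N$. Expanding $\bigl|\sum_{i=0}^{N}z^i\bigr|^2=\sum_{k=-N}^{N}(N+1-|k|)z^k$ and integrating against $\tilde\nu$ gives
\begin{align}
    \frac{1}{N}\int_{S^1}\Bigl|\sum_{i=0}^{N}\alpha_i z^i\Bigr|^2\,d\tilde\nu \;=\; \frac{c_0^2}{N} + \frac{2}{N(N+1)}\sum_{k=1}^{N}(N+1-k)\,c_k^2,
\end{align}
which Proposition \ref{prop:f_c} forces to $0$. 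Writing $S_N=\sum_{k=0}^N c_k^2$, the double-counting identity $\sum_{k=1}^N(N+1-k)c_k^2=\sum_{j=1}^N S_j-N c_0^2$ converts this into $N^{-2}\sum_{j=1}^N S_j\to 0$; since $(S_j)$ is non-decreasing, the lower bound $\sum_{j=1}^N S_j\geq \lfloor N/2\rfloor\cdot S_{\lfloor N/2\rfloor}$ then promotes this to $S_N/N\to 0$, which is exactly \eqref{eq:pp_ces}.

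The one conceptually non-routine step is spotting the pushforward $\tilde\nu$: once it is on the table, continuity of the squared Fourier transform is automatic, the calculation of $\tilde\nu(z^k)=c_k^2$ is a line, and the passage from the Fejér-weighted average back to a flat Cesàro mean is a standard Tauberian argument using nonnegativity and monotonicity of the partial sums. I do not anticipate genuine difficulty in any of the three components; the main obstacle is simply recognizing that Proposition \ref{prop:f_c} should be re-used on a new auxiliary continuous measure rather than directly on $\nu_c$.
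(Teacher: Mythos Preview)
Your argument is correct, but it takes a genuinely different route from the paper. The paper applies Proposition~\ref{prop:f_c} \emph{directly to $\nu_c$}: for each $k\le n$ it picks a phase $\beta_k\in S^1$ so that $\beta_k\int z^k\,d\nu_c=c_k\ge 0$, sets $\alpha_k=\beta_k/\sqrt{n}$, and then (via Cauchy--Schwarz on $S^1$) bounds
\[
\int_{S^1}\Bigl|\sum_{i=0}^{n}\alpha_i z^i\Bigr|^2 d\nu_c \;\gtrsim\; \frac{1}{n}\Bigl(\sum_{i=0}^{n}c_i\Bigr)^2 \;\ge\; \frac{M_n^2\epsilon^2}{n},
\]
so Proposition~\ref{prop:f_c} forces $M_n^2/n^2\to 0$ in one stroke. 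Your approach instead builds the auxiliary continuous measure $\tilde\nu$ (the pushforward of $\nu_c\times\nu_c$ under $(z,w)\mapsto z\bar w$, essentially Wiener's classical device) and feeds \emph{that} back into Proposition~\ref{prop:f_c} with the Fej\'er weights, recovering first the full Ces\`aro statement $N^{-1}\sum_{k\le N}c_k^2\to 0$ and then the density bound by Chebyshev. The paper's phase-alignment trick is shorter and avoids the Tauberian step from Fej\'er averages back to flat averages; your route is longer but yields the stronger intermediate conclusion (indeed you have essentially re-derived Lemma~\ref{lem:weakly mixing} for $f=g=1$ from Proposition~\ref{prop:f_c} alone), which is a nice by-product since the appendix's stated aim is precisely to give an alternative path to the weakly mixing property.
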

\begin{proof}
For any $n$, let $M_n = \#\{0<k<n : |\int_{S^1}z^k d\nu_c(z)|>\epsilon\}$.
For any $k\leq n$, pick $\beta_k\in S^1$ so that $\displaystyle\int_{S^1}\beta_kz^kd\nu_c(z) > 0$. Let $\alpha_k = \beta_k/\sqrt{n}$. Then $\|\alpha\| = 1$ and 
\begin{align}
    \int_{S^1}|\sum_{i=0}^n\alpha_iz^i|^2d\nu_c(z) \geq \frac{1}{n}|\int_{S^1}\sum_{i=0}^n\beta_iz^id\nu_c(z)|^2\nonumber \geq\frac{M_n^2\epsilon^2}{n}
\end{align}
Then proposition \ref{prop:f_c} implies that 
\begin{align}
\lim_{n\rightarrow\infty}\frac{M_n^2\epsilon^2}{n^2}= 0.
\end{align}
Hence 
\begin{align}
\lim_{n\rightarrow\infty}\frac{M_n}{n} = 0.
\end{align}
\end{proof}

\YZ{
\section{Proof of proposition \ref{prop:cont K = disc K}}
Proposition 1:
Assume that the curve $\mathcal{K}^s: [0,\infty)\to\mathcal{H}_f^{\text{cont}}$ is continuous in $s$. Let $q$ be an eigenfrequency of the discrete-time operator $\mathcal{K}^{\Delta t}$, i.e. there exists a time-series $h\in \mathcal{H}_f \hookrightarrow \mathcal{H}^{\text{cont}}_f$ so that $\mathcal{K}^{\Delta t}h = e^{iq}h$. Then there exists at least an integer $k$, and $0\neq h_k\in\mathcal{H}^{\text{cont}}_f$, so that
\begin{align}
    \mathcal{K}^sh_k = e^{i\frac{q+2k\pi}{\Delta t}s}h_k
\end{align}
for all $s\geq 0$.
\begin{proof}[Proof of proposition \ref{prop:cont K = disc K}]
Consider $g_s\in\mathcal{H}^{\text{cont}}_f$ where $g_s = e^{-is\frac{q}{\Delta t}}\mathcal{K}^sh$. Since $\mathcal{K}^{\Delta t}h = e^{iq}h$, $g_{\Delta t} = g_0 = h$. It is easy to show that $\|g_s\| = \|h\|$ for any $s\geq 0$. So we have a closed loop in $\mathcal{H}^{\text{cont}}_f$:
\begin{align}
    g: [0,\Delta t]\rightarrow \mathcal{H}^{\text{cont}}_f.
\end{align}
Now we do Fourier decomposition to this circle, i.e. for every integer $k$, we define
\begin{align}
    h_k = \frac{1}{\sqrt{\Delta t}}\int_{0}^{\Delta t}e^{-i\frac{2k\pi}{\Delta t}s}g_sds.
\end{align}
Parseval's theorem implies that 
\begin{align}
    \Delta t\|h\|^2 = \int_{0}^{\Delta t}\|g_s\|^2 ds = \sum_{k=-\infty}^{+\infty}\|h_k\|^2.
\end{align}
Therefore there exists an integer $l$, such that $h_l\neq 0$. Now for any $t\geq 0$, 
\begin{align}
    \mathcal{K}^sh_l =& \frac{1}{\sqrt{\Delta t}}\mathcal{K}^t\int_0^{\Delta t}e^{-i\frac{2l\pi}{\Delta t}s}e^{-is\frac{q}{\Delta t}}\mathcal{K}^shds\nonumber\\
    =&\frac{1}{\sqrt{\Delta t}}\int_{0}^{\Delta t}e^{-i\frac{2l\pi + q}{\Delta t}}s\mathcal{K}^{t+s}hds\nonumber \\
    =& e^{i\frac{2l\pi + q}{\Delta t}}t\frac{1}{\sqrt{\Delta t}}\int_{0}^{\Delta t}e^{-i\frac{2l\pi + q}{\Delta t}(t+s)}\mathcal{K}^{t+s}hds\nonumber \\
    =& e^{i\frac{2l\pi + q}{\Delta t}}t \frac{1}{\sqrt{\Delta t}}\int_{0}^{\Delta t}e^{-i\frac{2l\pi}{\Delta t}(t+s)}g_{t+s}ds\nonumber \\
    =& e^{i\frac{2l\pi + q}{\Delta t}}t \frac{1}{\sqrt{\Delta t}}\int_{t}^{t + \Delta t}e^{-i\frac{2l\pi}{\Delta t}s}g_sds\nonumber \\
    =& e^{i\frac{2l\pi+q}{\Delta t}t}h_l.
\end{align}
In other words, $h_l$ is an eigen-vector of the continuous-time operator $\mathcal{K}^s$ for any $s\geq 0$.
\end{proof}
}
\appendix


\begin{thebibliography}{30}%
\makeatletter
\providecommand \@ifxundefined [1]{%
 \@ifx{#1\undefined}
}%
\providecommand \@ifnum [1]{%
 \ifnum #1\expandafter \@firstoftwo
 \else \expandafter \@secondoftwo
 \fi
}%
\providecommand \@ifx [1]{%
 \ifx #1\expandafter \@firstoftwo
 \else \expandafter \@secondoftwo
 \fi
}%
\providecommand \natexlab [1]{#1}%
\providecommand \enquote  [1]{``#1''}%
\providecommand \bibnamefont  [1]{#1}%
\providecommand \bibfnamefont [1]{#1}%
\providecommand \citenamefont [1]{#1}%
\providecommand \href@noop [0]{\@secondoftwo}%
\providecommand \href [0]{\begingroup \@sanitize@url \@href}%
\providecommand \@href[1]{\@@startlink{#1}\@@href}%
\providecommand \@@href[1]{\endgroup#1\@@endlink}%
\providecommand \@sanitize@url [0]{\catcode `\\12\catcode `\$12\catcode
  `\&12\catcode `\#12\catcode `\^12\catcode `\_12\catcode `\%12\relax}%
\providecommand \@@startlink[1]{}%
\providecommand \@@endlink[0]{}%
\providecommand \url  [0]{\begingroup\@sanitize@url \@url }%
\providecommand \@url [1]{\endgroup\@href {#1}{\urlprefix }}%
\providecommand \urlprefix  [0]{URL }%
\providecommand \Eprint [0]{\href }%
\providecommand \doibase [0]{https://doi.org/}%
\providecommand \selectlanguage [0]{\@gobble}%
\providecommand \bibinfo  [0]{\@secondoftwo}%
\providecommand \bibfield  [0]{\@secondoftwo}%
\providecommand \translation [1]{[#1]}%
\providecommand \BibitemOpen [0]{}%
\providecommand \bibitemStop [0]{}%
\providecommand \bibitemNoStop [0]{.\EOS\space}%
\providecommand \EOS [0]{\spacefactor3000\relax}%
\providecommand \BibitemShut  [1]{\csname bibitem#1\endcsname}%
\let\auto@bib@innerbib\@empty
\bibitem [{\citenamefont {Schmid}(2010)}]{Schmid2010}%
  \BibitemOpen
  \bibfield  {author} {\bibinfo {author} {\bibfnamefont {P.}~\bibnamefont
  {Schmid}},\ }\href@noop {} {\bibfield  {journal} {\bibinfo  {journal}
  {Journal of Fluid Mechanics}\ }\textbf {\bibinfo {volume} {656}},\ \bibinfo
  {pages} {5 } (\bibinfo {year} {2010})}\BibitemShut {NoStop}%
\bibitem [{\citenamefont {Rowley}\ \emph {et~al.}(2009)\citenamefont {Rowley},
  \citenamefont {Mezi{\"c}}, \citenamefont {Bagheri}, \citenamefont
  {Schlatter},\ and\ \citenamefont {Henningson}}]{Rowley2009}%
  \BibitemOpen
  \bibfield  {author} {\bibinfo {author} {\bibfnamefont {C.}~\bibnamefont
  {Rowley}}, \bibinfo {author} {\bibfnamefont {I.}~\bibnamefont {Mezi{\"c}}},
  \bibinfo {author} {\bibfnamefont {S.}~\bibnamefont {Bagheri}}, \bibinfo
  {author} {\bibfnamefont {P.}~\bibnamefont {Schlatter}},\ and\ \bibinfo
  {author} {\bibfnamefont {D.}~\bibnamefont {Henningson}},\ }\href@noop {}
  {\bibfield  {journal} {\bibinfo  {journal} {Journal of Fluid Mechanics}\
  }\textbf {\bibinfo {volume} {641}},\ \bibinfo {pages} {115 } (\bibinfo {year}
  {2009})}\BibitemShut {NoStop}%
\bibitem [{\citenamefont {Saad}(1980)}]{Saad1980}%
  \BibitemOpen
  \bibfield  {author} {\bibinfo {author} {\bibfnamefont {Y.}~\bibnamefont
  {Saad}},\ }\href
  {https://doi.org/https://doi.org/10.1016/0024-3795(80)90169-X} {\bibfield
  {journal} {\bibinfo  {journal} {Linear Algebra and its Applications}\
  }\textbf {\bibinfo {volume} {34}},\ \bibinfo {pages} {269} (\bibinfo {year}
  {1980})}\BibitemShut {NoStop}%
\bibitem [{\citenamefont {Chen}\ \emph {et~al.}(2012)\citenamefont {Chen},
  \citenamefont {Tu},\ and\ \citenamefont {Rowley}}]{Chen2012}%
  \BibitemOpen
  \bibfield  {author} {\bibinfo {author} {\bibfnamefont {K.~K.}\ \bibnamefont
  {Chen}}, \bibinfo {author} {\bibfnamefont {J.}~\bibnamefont {Tu}},\ and\
  \bibinfo {author} {\bibfnamefont {C.}~\bibnamefont {Rowley}},\ }\href@noop {}
  {\bibfield  {journal} {\bibinfo  {journal} {Journal of Nonlinear Science}\
  }\textbf {\bibinfo {volume} {22}},\ \bibinfo {pages} {887} (\bibinfo {year}
  {2012})}\BibitemShut {NoStop}%
\bibitem [{\citenamefont {Wynn}\ \emph {et~al.}(2013)\citenamefont {Wynn},
  \citenamefont {Pearson}, \citenamefont {Ganapathisubramani},\ and\
  \citenamefont {Goulart}}]{wynn2013}%
  \BibitemOpen
  \bibfield  {author} {\bibinfo {author} {\bibfnamefont {A.}~\bibnamefont
  {Wynn}}, \bibinfo {author} {\bibfnamefont {D.~S.}\ \bibnamefont {Pearson}},
  \bibinfo {author} {\bibfnamefont {B.}~\bibnamefont {Ganapathisubramani}},\
  and\ \bibinfo {author} {\bibfnamefont {P.~J.}\ \bibnamefont {Goulart}},\
  }\href {https://doi.org/10.1017/jfm.2013.426} {\bibfield  {journal} {\bibinfo
   {journal} {Journal of Fluid Mechanics}\ }\textbf {\bibinfo {volume} {733}},\
  \bibinfo {pages} {473–503} (\bibinfo {year} {2013})}\BibitemShut {NoStop}%
\bibitem [{\citenamefont {Tu}\ \emph {et~al.}(2014)\citenamefont {Tu},
  \citenamefont {Rowley}, \citenamefont {Luchtenburg}, \citenamefont
  {Brunton},\ and\ \citenamefont {Kutz}}]{Tu2014}%
  \BibitemOpen
  \bibfield  {author} {\bibinfo {author} {\bibfnamefont {J.}~\bibnamefont
  {Tu}}, \bibinfo {author} {\bibfnamefont {C.}~\bibnamefont {Rowley}}, \bibinfo
  {author} {\bibfnamefont {D.~M.}\ \bibnamefont {Luchtenburg}}, \bibinfo
  {author} {\bibfnamefont {S.}~\bibnamefont {Brunton}},\ and\ \bibinfo {author}
  {\bibfnamefont {J.}~\bibnamefont {Kutz}},\ }\href@noop {} {\bibfield
  {journal} {\bibinfo  {journal} {ACM Journal of Computer Documentation}\
  }\textbf {\bibinfo {volume} {1}},\ \bibinfo {pages} {391} (\bibinfo {year}
  {2014})}\BibitemShut {NoStop}%
\bibitem [{\citenamefont {Arbabi}\ and\ \citenamefont
  {Mezi{\"c}}(2017)}]{Arbabi2017}%
  \BibitemOpen
  \bibfield  {author} {\bibinfo {author} {\bibfnamefont {H.}~\bibnamefont
  {Arbabi}}\ and\ \bibinfo {author} {\bibfnamefont {I.}~\bibnamefont
  {Mezi{\"c}}},\ }\href@noop {} {\bibfield  {journal} {\bibinfo  {journal}
  {SIAM J. Appl. Dyn. Syst.}\ }\textbf {\bibinfo {volume} {16}},\ \bibinfo
  {pages} {2096} (\bibinfo {year} {2017})}\BibitemShut {NoStop}%
\bibitem [{\citenamefont {Kusaba}\ \emph {et~al.}(2020)\citenamefont {Kusaba},
  \citenamefont {Kuboyama},\ and\ \citenamefont {Inagaki}}]{Kusaba2020}%
  \BibitemOpen
  \bibfield  {author} {\bibinfo {author} {\bibfnamefont {A.}~\bibnamefont
  {Kusaba}}, \bibinfo {author} {\bibfnamefont {T.}~\bibnamefont {Kuboyama}},\
  and\ \bibinfo {author} {\bibfnamefont {S.}~\bibnamefont {Inagaki}},\
  }\href@noop {} {\bibfield  {journal} {\bibinfo  {journal} {Plasma and Fusion
  Research}\ }\textbf {\bibinfo {volume} {15}},\ \bibinfo {pages} {1301001}
  (\bibinfo {year} {2020})}\BibitemShut {NoStop}%
\bibitem [{\citenamefont {Kutz}\ \emph {et~al.}(2015)\citenamefont {Kutz},
  \citenamefont {Fu},\ and\ \citenamefont {Brunton}}]{Kutz2015}%
  \BibitemOpen
  \bibfield  {author} {\bibinfo {author} {\bibfnamefont {J.}~\bibnamefont
  {Kutz}}, \bibinfo {author} {\bibfnamefont {X.}~\bibnamefont {Fu}},\ and\
  \bibinfo {author} {\bibfnamefont {S.}~\bibnamefont {Brunton}},\ }\href@noop
  {} {\bibfield  {journal} {\bibinfo  {journal} {arXiv: Dynamical Systems}\ }
  (\bibinfo {year} {2015})}\BibitemShut {NoStop}%
\bibitem [{\citenamefont {Williams}\ \emph
  {et~al.}(2015{\natexlab{a}})\citenamefont {Williams}, \citenamefont
  {Kevrekidis},\ and\ \citenamefont {Rowley}}]{Williams2015}%
  \BibitemOpen
  \bibfield  {author} {\bibinfo {author} {\bibfnamefont {M.}~\bibnamefont
  {Williams}}, \bibinfo {author} {\bibfnamefont {I.}~\bibnamefont
  {Kevrekidis}},\ and\ \bibinfo {author} {\bibfnamefont {C.}~\bibnamefont
  {Rowley}},\ }\href@noop {} {\bibfield  {journal} {\bibinfo  {journal}
  {Journal of Nonlinear Science}\ }\textbf {\bibinfo {volume} {25}},\ \bibinfo
  {pages} {1307} (\bibinfo {year} {2015}{\natexlab{a}})}\BibitemShut {NoStop}%
\bibitem [{\citenamefont {Proctor}\ \emph {et~al.}(2016)\citenamefont
  {Proctor}, \citenamefont {Brunton},\ and\ \citenamefont
  {Kutz}}]{Proctor2016}%
  \BibitemOpen
  \bibfield  {author} {\bibinfo {author} {\bibfnamefont {J.}~\bibnamefont
  {Proctor}}, \bibinfo {author} {\bibfnamefont {S.}~\bibnamefont {Brunton}},\
  and\ \bibinfo {author} {\bibfnamefont {J.}~\bibnamefont {Kutz}},\ }\href@noop
  {} {\bibfield  {journal} {\bibinfo  {journal} {SIAM J. Appl. Dyn. Syst.}\
  }\textbf {\bibinfo {volume} {15}},\ \bibinfo {pages} {142} (\bibinfo {year}
  {2016})}\BibitemShut {NoStop}%
\bibitem [{\citenamefont {Hemati}\ \emph {et~al.}(2017)\citenamefont {Hemati},
  \citenamefont {Rowley}, \citenamefont {Deem},\ and\ \citenamefont
  {Cattafesta}}]{Hemati2017}%
  \BibitemOpen
  \bibfield  {author} {\bibinfo {author} {\bibfnamefont {M.}~\bibnamefont
  {Hemati}}, \bibinfo {author} {\bibfnamefont {C.}~\bibnamefont {Rowley}},
  \bibinfo {author} {\bibfnamefont {E.~A.}\ \bibnamefont {Deem}},\ and\
  \bibinfo {author} {\bibfnamefont {L.}~\bibnamefont {Cattafesta}},\
  }\href@noop {} {\bibfield  {journal} {\bibinfo  {journal} {Theoretical and
  Computational Fluid Dynamics}\ }\textbf {\bibinfo {volume} {31}},\ \bibinfo
  {pages} {349} (\bibinfo {year} {2017})}\BibitemShut {NoStop}%
\bibitem [{\citenamefont {Taylor-King}\ \emph {et~al.}(2020)\citenamefont
  {Taylor-King}, \citenamefont {Riseth}, \citenamefont {Macnair},\ and\
  \citenamefont {Claassen}}]{TaylorKing2020}%
  \BibitemOpen
  \bibfield  {author} {\bibinfo {author} {\bibfnamefont {J.~P.}\ \bibnamefont
  {Taylor-King}}, \bibinfo {author} {\bibfnamefont {A.~N.}\ \bibnamefont
  {Riseth}}, \bibinfo {author} {\bibfnamefont {W.}~\bibnamefont {Macnair}},\
  and\ \bibinfo {author} {\bibfnamefont {M.}~\bibnamefont {Claassen}},\
  }\href@noop {} {\bibfield  {journal} {\bibinfo  {journal} {PLoS Computational
  Biology}\ }\textbf {\bibinfo {volume} {16}} (\bibinfo {year}
  {2020})}\BibitemShut {NoStop}%
\bibitem [{\citenamefont {Korda}\ and\ \citenamefont
  {Mezi{\"c}}(2018)}]{Korda2018}%
  \BibitemOpen
  \bibfield  {author} {\bibinfo {author} {\bibfnamefont {M.}~\bibnamefont
  {Korda}}\ and\ \bibinfo {author} {\bibfnamefont {I.}~\bibnamefont
  {Mezi{\"c}}},\ }\href@noop {} {\bibfield  {journal} {\bibinfo  {journal}
  {Journal of Nonlinear Science}\ }\textbf {\bibinfo {volume} {28}},\ \bibinfo
  {pages} {687} (\bibinfo {year} {2018})}\BibitemShut {NoStop}%
\bibitem [{\citenamefont {Korda}\ \emph {et~al.}(2020)\citenamefont {Korda},
  \citenamefont {Putinar},\ and\ \citenamefont {Mezić}}]{KORDA2020599}%
  \BibitemOpen
  \bibfield  {author} {\bibinfo {author} {\bibfnamefont {M.}~\bibnamefont
  {Korda}}, \bibinfo {author} {\bibfnamefont {M.}~\bibnamefont {Putinar}},\
  and\ \bibinfo {author} {\bibfnamefont {I.}~\bibnamefont {Mezić}},\ }\href
  {https://doi.org/https://doi.org/10.1016/j.acha.2018.08.002} {\bibfield
  {journal} {\bibinfo  {journal} {Applied and Computational Harmonic Analysis}\
  }\textbf {\bibinfo {volume} {48}},\ \bibinfo {pages} {599} (\bibinfo {year}
  {2020})}\BibitemShut {NoStop}%
\bibitem [{\citenamefont {Williams}\ \emph
  {et~al.}(2015{\natexlab{b}})\citenamefont {Williams}, \citenamefont
  {Rowley},\ and\ \citenamefont {Kevrekidis}}]{WilliamsRowleyKevrekidis2015}%
  \BibitemOpen
  \bibfield  {author} {\bibinfo {author} {\bibfnamefont {M.~O.}\ \bibnamefont
  {Williams}}, \bibinfo {author} {\bibfnamefont {C.~W.}\ \bibnamefont
  {Rowley}},\ and\ \bibinfo {author} {\bibfnamefont {I.~G.}\ \bibnamefont
  {Kevrekidis}},\ }\href@noop {} {\bibfield  {journal} {\bibinfo  {journal}
  {Journal of Computational Dynamics}\ }\textbf {\bibinfo {volume} {2}},\
  \bibinfo {pages} {247} (\bibinfo {year} {2015}{\natexlab{b}})}\BibitemShut
  {NoStop}%
\bibitem [{\citenamefont {Das}\ and\ \citenamefont
  {Giannakis}(2017)}]{Das2017}%
  \BibitemOpen
  \bibfield  {author} {\bibinfo {author} {\bibfnamefont {S.}~\bibnamefont
  {Das}}\ and\ \bibinfo {author} {\bibfnamefont {D.}~\bibnamefont
  {Giannakis}},\ }\href@noop {} {\bibfield  {journal} {\bibinfo  {journal}
  {Journal of Statistical Physics}\ }\textbf {\bibinfo {volume} {175}},\
  \bibinfo {pages} {1107} (\bibinfo {year} {2017})}\BibitemShut {NoStop}%
\bibitem [{\citenamefont {Das}\ and\ \citenamefont
  {Giannakis}(2018)}]{Das2018}%
  \BibitemOpen
  \bibfield  {author} {\bibinfo {author} {\bibfnamefont {S.}~\bibnamefont
  {Das}}\ and\ \bibinfo {author} {\bibfnamefont {D.}~\bibnamefont
  {Giannakis}},\ }\href@noop {} {\bibfield  {journal} {\bibinfo  {journal}
  {arXiv: Dynamical Systems}\ } (\bibinfo {year} {2018})}\BibitemShut {NoStop}%
\bibitem [{\citenamefont {Giannakis}\ \emph {et~al.}(2018)\citenamefont
  {Giannakis}, \citenamefont {Das},\ and\ \citenamefont
  {Slawinska}}]{Giannakis2018}%
  \BibitemOpen
  \bibfield  {author} {\bibinfo {author} {\bibfnamefont {D.}~\bibnamefont
  {Giannakis}}, \bibinfo {author} {\bibfnamefont {S.}~\bibnamefont {Das}},\
  and\ \bibinfo {author} {\bibfnamefont {J.}~\bibnamefont {Slawinska}},\
  }\href@noop {} {\bibfield  {journal} {\bibinfo  {journal} {arXiv: Dynamical
  Systems}\ } (\bibinfo {year} {2018})}\BibitemShut {NoStop}%
\bibitem [{\citenamefont {Giannakis}(2020)}]{Giannakis2020}%
  \BibitemOpen
  \bibfield  {author} {\bibinfo {author} {\bibfnamefont {D.}~\bibnamefont
  {Giannakis}},\ }\href@noop {} {\bibfield  {journal} {\bibinfo  {journal}
  {Research in the Mathematical Sciences}\ }\textbf {\bibinfo {volume} {8}},\
  \bibinfo {pages} {1} (\bibinfo {year} {2020})}\BibitemShut {NoStop}%
\bibitem [{\citenamefont {Nuske}\ \emph {et~al.}(2021)\citenamefont {Nuske},
  \citenamefont {Peitz}, \citenamefont {Philipp}, \citenamefont {Schaller},\
  and\ \citenamefont {Worthmann}}]{Nuske2021}%
  \BibitemOpen
  \bibfield  {author} {\bibinfo {author} {\bibfnamefont {F.}~\bibnamefont
  {Nuske}}, \bibinfo {author} {\bibfnamefont {S.}~\bibnamefont {Peitz}},
  \bibinfo {author} {\bibfnamefont {F.}~\bibnamefont {Philipp}}, \bibinfo
  {author} {\bibfnamefont {M.}~\bibnamefont {Schaller}},\ and\ \bibinfo
  {author} {\bibfnamefont {K.}~\bibnamefont {Worthmann}},\ }\href@noop {}
  {\bibfield  {journal} {\bibinfo  {journal} {arXiv}\ } (\bibinfo {year}
  {2021})}\BibitemShut {NoStop}%
\bibitem [{\citenamefont {Ghil}\ \emph {et~al.}(2002)\citenamefont {Ghil},
  \citenamefont {Allen}, \citenamefont {Dettinger}, \citenamefont {Ide},
  \citenamefont {Kondrashov}, \citenamefont {Mann}, \citenamefont {Robertson},
  \citenamefont {Saunders}, \citenamefont {Tian}, \citenamefont {Varadi},\ and\
  \citenamefont {Yiou}}]{Ghil2002AdvancedSM}%
  \BibitemOpen
  \bibfield  {author} {\bibinfo {author} {\bibfnamefont {M.}~\bibnamefont
  {Ghil}}, \bibinfo {author} {\bibfnamefont {M.~R.}\ \bibnamefont {Allen}},
  \bibinfo {author} {\bibfnamefont {M.~D.}\ \bibnamefont {Dettinger}}, \bibinfo
  {author} {\bibfnamefont {K.}~\bibnamefont {Ide}}, \bibinfo {author}
  {\bibfnamefont {D.}~\bibnamefont {Kondrashov}}, \bibinfo {author}
  {\bibfnamefont {M.~E.}\ \bibnamefont {Mann}}, \bibinfo {author}
  {\bibfnamefont {A.}~\bibnamefont {Robertson}}, \bibinfo {author}
  {\bibfnamefont {A.}~\bibnamefont {Saunders}}, \bibinfo {author}
  {\bibfnamefont {Y.}~\bibnamefont {Tian}}, \bibinfo {author} {\bibfnamefont
  {F.}~\bibnamefont {Varadi}},\ and\ \bibinfo {author} {\bibfnamefont
  {P.}~\bibnamefont {Yiou}},\ }\href@noop {} {\bibfield  {journal} {\bibinfo
  {journal} {Reviews of Geophysics}\ }\textbf {\bibinfo {volume} {40}},\
  \bibinfo {pages} {3} (\bibinfo {year} {2002})}\BibitemShut {NoStop}%
\bibitem [{\citenamefont {Kondrashov}\ \emph {et~al.}(2020)\citenamefont
  {Kondrashov}, \citenamefont {Ryzhov},\ and\ \citenamefont
  {Berloff}}]{Kondrashov2020DataadaptiveHA}%
  \BibitemOpen
  \bibfield  {author} {\bibinfo {author} {\bibfnamefont {D.}~\bibnamefont
  {Kondrashov}}, \bibinfo {author} {\bibfnamefont {E.}~\bibnamefont {Ryzhov}},\
  and\ \bibinfo {author} {\bibfnamefont {P.}~\bibnamefont {Berloff}},\
  }\href@noop {} {\bibfield  {journal} {\bibinfo  {journal} {Chaos}\ }\textbf
  {\bibinfo {volume} {30 6}},\ \bibinfo {pages} {061105} (\bibinfo {year}
  {2020})}\BibitemShut {NoStop}%
\bibitem [{\citenamefont {Brunton}\ \emph {et~al.}(2017)\citenamefont
  {Brunton}, \citenamefont {Brunton}, \citenamefont {Proctor}, \citenamefont
  {Kaiser},\ and\ \citenamefont {Kutz}}]{Brunton2017ChaosAA}%
  \BibitemOpen
  \bibfield  {author} {\bibinfo {author} {\bibfnamefont {S.~L.}\ \bibnamefont
  {Brunton}}, \bibinfo {author} {\bibfnamefont {B.~W.}\ \bibnamefont
  {Brunton}}, \bibinfo {author} {\bibfnamefont {J.~L.}\ \bibnamefont
  {Proctor}}, \bibinfo {author} {\bibfnamefont {E.}~\bibnamefont {Kaiser}},\
  and\ \bibinfo {author} {\bibfnamefont {J.~N.}\ \bibnamefont {Kutz}},\
  }\href@noop {} {\bibfield  {journal} {\bibinfo  {journal} {Nature
  Communications}\ }\textbf {\bibinfo {volume} {8}} (\bibinfo {year}
  {2017})}\BibitemShut {NoStop}%
\bibitem [{\citenamefont {Yosida}(1995)}]{Yosida1995}%
  \BibitemOpen
  \bibfield  {author} {\bibinfo {author} {\bibfnamefont {K.}~\bibnamefont
  {Yosida}},\ }\href@noop {} {\emph {\bibinfo {title} {Functional Analysis}}}\
  (\bibinfo  {publisher} {Springer},\ \bibinfo {address} {Berlin, Heidelberg},\
  \bibinfo {year} {1995})\BibitemShut {NoStop}%
\bibitem [{\citenamefont {Borthwick}(2020)}]{spectraltheoryBorthwick2020}%
  \BibitemOpen
  \bibfield  {author} {\bibinfo {author} {\bibfnamefont {D.}~\bibnamefont
  {Borthwick}},\ }\href@noop {} {\emph {\bibinfo {title} {Spectral Theory:
  Basic Concepts and Applications}}},\ \bibinfo {edition} {1st}\ ed.,\ Graduate
  Texts in Mathematics №284\ (\bibinfo  {publisher} {Springer International
  Publishing;Springer},\ \bibinfo {year} {2020})\BibitemShut {NoStop}%
\bibitem [{\citenamefont {Halmos}(1956)}]{Halmos1956}%
  \BibitemOpen
  \bibfield  {author} {\bibinfo {author} {\bibfnamefont {P.~R.}\ \bibnamefont
  {Halmos}},\ }\href@noop {} {\emph {\bibinfo {title} {Lectures on Ergodic
  Theory}}}\ (\bibinfo  {publisher} {Chelsea Publishing Company},\ \bibinfo
  {address} {New York, N.Y},\ \bibinfo {year} {1956})\BibitemShut {NoStop}%
\bibitem [{\citenamefont {Mezi{\'c}}\ and\ \citenamefont
  {Banaszuk}(2004)}]{Mezi2004ComparisonOS}%
  \BibitemOpen
  \bibfield  {author} {\bibinfo {author} {\bibfnamefont {I.}~\bibnamefont
  {Mezi{\'c}}}\ and\ \bibinfo {author} {\bibfnamefont {A.}~\bibnamefont
  {Banaszuk}},\ }\href@noop {} {\bibfield  {journal} {\bibinfo  {journal}
  {Physica D: Nonlinear Phenomena}\ }\textbf {\bibinfo {volume} {197}},\
  \bibinfo {pages} {101} (\bibinfo {year} {2004})}\BibitemShut {NoStop}%
\bibitem [{\citenamefont {Mezi{\'c}}(2005)}]{Mezi2005SpectralPO}%
  \BibitemOpen
  \bibfield  {author} {\bibinfo {author} {\bibfnamefont {I.}~\bibnamefont
  {Mezi{\'c}}},\ }\href@noop {} {\bibfield  {journal} {\bibinfo  {journal}
  {Nonlinear Dynamics}\ }\textbf {\bibinfo {volume} {41}},\ \bibinfo {pages}
  {309} (\bibinfo {year} {2005})}\BibitemShut {NoStop}%
\bibitem [{\citenamefont {Cazenave}\ and\ \citenamefont
  {Llovel}(2010)}]{Cazenave2010}%
  \BibitemOpen
  \bibfield  {author} {\bibinfo {author} {\bibfnamefont {A.}~\bibnamefont
  {Cazenave}}\ and\ \bibinfo {author} {\bibfnamefont {W.}~\bibnamefont
  {Llovel}},\ }\href@noop {} {\bibfield  {journal} {\bibinfo  {journal} {Annual
  review of marine science}\ }\textbf {\bibinfo {volume} {2}},\ \bibinfo
  {pages} {145} (\bibinfo {year} {2010})}\BibitemShut {NoStop}%
\end{thebibliography}
\providecommand{\noopsort}[1]{}\providecommand{\singleletter}[1]{#1}%

\end{document}